\crefname{hypothesis}{Hypothesis}{Hypotheses}
\title{Entropy stable spectral collocation schemes for the 3-D Navier-Stokes equations on dynamic unstructured grids
}
\author{Nail K. Yamaleev\thanks{Department of Mathematics and Statistics, Old Dominion University
  (\email{nyamalee@odu.edu}).}
\and David C. Del Rey Fern\'andez\thanks{National Institute of Aerospace (\email{dcdelrey@gmail.com})}
\and Jialin Lou\footnotemark[1]
\and Mark H. Carpenter\thanks{NASA Langley Reserach Center (\email{mark.h.carpeneter@nasa.gov})}
}
\DeclareMathOperator{\diag}{diag}
\newcommand{\Tr}{\ensuremath{^{\mr{T}}}}
\newcommand{\mr}[1]{\ensuremath{\mathrm{#1}}}
\newcommand{\mat}[1]{\ensuremath{\mathsf{#1}}}
\newcommand{\fnc}[1]{\ensuremath{\mathit{#1}}}
\newcommand{\bfnc}[1]{\ensuremath{\bm{\mathit{#1}}}}
\newcommand{\xm}[0]{\ensuremath{x_{m}}}
\newcommand{\xone}[0]{x_{1}}
\newcommand{\xtwo}[0]{x_{2}}
\newcommand{\xthree}[0]{x_{3}}
\newcommand{\xil}[0]{\ensuremath{\xi_{l}}}
\newcommand{\xione}[0]{\xi_{1}}
\newcommand{\xitwo}[0]{\xi_{2}}
\newcommand{\xithree}[0]{\xi_{3}}
\newcommand{\alphaone}[0]{\ensuremath{\alpha_{1}}}
\newcommand{\alphatwo}[0]{\ensuremath{\alpha_{2}}}
\newcommand{\alphathree}[0]{\ensuremath{\alpha_{3}}}
\newcommand{\betaone}[0]{\ensuremath{\beta_{1}}}
\newcommand{\betatwo}[0]{\ensuremath{\beta_{2}}}
\newcommand{\betathree}[0]{\ensuremath{\beta_{3}}}
\newcommand{\alphal}[0]{\ensuremath{\alpha_{l}}}
\newcommand{\betal}[0]{\ensuremath{\beta_{l}}}
\newcommand{\NOne}[0]{\ensuremath{N_{1}}}
\newcommand{\NTwo}[0]{\ensuremath{N_{2}}}
\newcommand{\NThree}[0]{\ensuremath{N_{3}}}
\newcommand{\Nl}[0]{\ensuremath{N_{l}}}
\newcommand{\bxil}[0]{\ensuremath{\bm{\xi}_{l}}}
\newcommand{\Ck}[0]{\ensuremath{C_{\kappa}}}
\newcommand{\DxiloneD}[0]{\ensuremath{\mat{D}_{\xil}^{(1D)}}}
\newcommand{\HxiloneD}[0]{\ensuremath{\mat{P}_{\xil}^{(1D)}}}
\newcommand{\QxiloneD}[0]{\ensuremath{\mat{Q}_{\xil}^{(1D)}}}
\newcommand{\SxiloneD}[0]{\ensuremath{\mat{S}_{\xil}^{(1D)}}}
\newcommand{\ExiloneD}[0]{\ensuremath{\mat{E}_{\xil}^{(1D)}}}
\newcommand{\txilalpha}[0]{\ensuremath{\bm{t}_{\alphal}}}
\newcommand{\txilbeta}[0]{\ensuremath{\bm{t}_{\betal}}}
\newcommand{\DxioneoneD}[0]{\ensuremath{\mat{D}_{\xione}^{(1D)}}}
\newcommand{\HxioneoneD}[0]{\ensuremath{\mat{P}_{\xione}^{(1D)}}}
\newcommand{\QxioneoneD}[0]{\ensuremath{\mat{Q}_{\xione}^{(1D)}}}
\newcommand{\HxitwooneD}[0]{\ensuremath{\mat{P}_{\xitwo}^{(1D)}}}
\newcommand{\HxithreeoneD}[0]{\ensuremath{\mat{P}_{\xithree}^{(1D)}}}
\newcommand{\txionealpha}[0]{\ensuremath{\bm{t}_{\alphaone}}}
\newcommand{\txionebeta}[0]{\ensuremath{\bm{t}_{\betaone}}}
\newcommand{\Ifive}[0]{\ensuremath{\mat{I}_{5}}}
\newcommand{\onefive}[0]{\ensuremath{\bm{1}_{5}}}
\newcommand{\M}[0]{\ensuremath{\overline{\mat{P}}}}
\newcommand{\Ixitwo}[0]{\ensuremath{\mat{I}_{\xitwo}}}
\newcommand{\Ixithree}[0]{\ensuremath{\mat{I}_{\xithree}}}
\newcommand{\Dxione}[0]{\ensuremath{\overline{\mat{D}}_{\xione}}}
\newcommand{\Qxione}[0]{\ensuremath{\overline{\mat{Q}}_{\xione}}}
\newcommand{\Exione}[0]{\ensuremath{\overline{\mat{E}}_{\xione}}}
\newcommand{\Exionealpha}[0]{\ensuremath{\overline{\mat{E}}_{\alphaone}}}
\newcommand{\Exionebeta}[0]{\ensuremath{\overline{\mat{E}}_{\betaone}}}
\newcommand{\Dxitwo}[0]{\ensuremath{\overline{\mat{D}}_{\xitwo}}}
\newcommand{\Dxithree}[0]{\ensuremath{\overline{\mat{D}}_{\xithree}}}
\newcommand{\Rxilalpha}[0]{\ensuremath{\overline{\mat{R}}_{\alphal}}}
\newcommand{\Rxilbeta}[0]{\ensuremath{\overline{\mat{R}}_{\betal}}}
\newcommand{\Rxilalphasca}[0]{\ensuremath{\mat{R}_{\alphal}}}
\newcommand{\Rxilbetasca}[0]{\ensuremath{\mat{R}_{\betal}}}
\newcommand{\Rxionealpha}[0]{\ensuremath{\overline{\mat{R}}_{\alphaone}}}
\newcommand{\Rxionebeta}[0]{\ensuremath{\overline{\mat{R}}_{\betaone}}}
\newcommand{\Pxilortho}[0]{\ensuremath{\overline{\mat{P}}_{\perp\xil}}}
\newcommand{\Pxilorthosca}[0]{\ensuremath{\mat{P}_{\perp\xil}}}
\newcommand{\Pxioneortho}[0]{\ensuremath{\overline{\mat{P}}_{\perp\xione}}}
\newcommand{\Qxil}[0]{\ensuremath{\overline{\mat{Q}}_{\xil}}}
\newcommand{\Exil}[0]{\ensuremath{\overline{\mat{E}}_{\xil}}}
\newcommand{\Dxil}[0]{\ensuremath{\overline{\mat{D}}_{\xil}}}
\newcommand{\Dxilsca}[0]{\ensuremath{\mat{D}}_{\xil}}
\newcommand{\Msca}[0]{\ensuremath{\mat{P}}}
\newcommand{\Exilsca}[0]{\ensuremath{\mat{E}_{\xil}}}
\newcommand{\matFxmscsca}[3]{\ensuremath{\mat{F}_{\xm}^{sc,(#1)}\left(#2,#3\right)}}
\newcommand{\matQscsca}[3]{\ensuremath{\mat{U}}^{sc,(#1)}\left(#2,#3\right)}
\newcommand{\Ok}[0]{\ensuremath{\Omega_{\kappa}}}
\newcommand{\Gk}[0]{\ensuremath{\Gamma_{\kappa}}}
\newcommand{\Ohat}[0]{\ensuremath{\hat{\Omega}}}
\newcommand{\Ohatk}[0]{\ensuremath{\hat{\Omega}_{\kappa}}}
\newcommand{\Ghatk}[0]{\ensuremath{\hat{\Gamma}_{\kappa}}}
\newcommand{\Ghat}[0]{\ensuremath{\hat{\Gamma}}}
\newcommand{\nxil}[0]{\ensuremath{n_{\xil}}}
\newcommand{\U}[0]{\ensuremath{\bfnc{U}}}
\newcommand{\Fxm}[0]{\ensuremath{\bfnc{F}}_{\xm}}
\newcommand{\GB}[0]{\ensuremath{\bfnc{G}}^{(B)}}
\newcommand{\Gzero}[0]{\ensuremath{\bfnc{G}}^{(0)}}
\newcommand{\Vone}[0]{\ensuremath{\fnc{V}_{1}}}
\newcommand{\Vtwo}[0]{\ensuremath{\fnc{V}_{2}}}
\newcommand{\Vthree}[0]{\ensuremath{\fnc{V}_{3}}}
\newcommand{\Vm}[0]{\ensuremath{\fnc{V}_{m}}}
\newcommand{\Jk}[0]{\ensuremath{\fnc{J}_{\kappa}}}
\newcommand{\Qk}[0]{\ensuremath{\bfnc{U}_{\kappa}}}
\newcommand{\E}[0]{\ensuremath{\fnc{E}}}
\newcommand{\Almk}{\ensuremath{\Jk\frac{\partial\xil}{\partial\xm}}}
\newcommand{\Blk}{\ensuremath{\Jk\frac{\partial\xil}{\partial t}}}
\newcommand{\Alm}{\ensuremath{\fnc{J}\frac{\partial\xil}{\partial\xm}}}
\newcommand{\Bl}{\ensuremath{\fnc{J}\frac{\partial\xil}{\partial t}}}
\newcommand{\matJk}[0]{\ensuremath{\overline{\mat{J}}_{\kappa}}}
\newcommand{\matAlmk}[0]{\ensuremath{\overline{\left[\fnc{J}\frac{\partial\xil}{\partial \xm}\right]}_{\kappa}}}
\newcommand{\matBlk}[0]{\ensuremath{\overline{\left[\fnc{J}\frac{\partial\xil}{\partial t}\right]}_{\kappa}}}
\newcommand{\matBltwolmone}[0]{\ensuremath{\overline{\left[\fnc{J}\frac{\partial\xil}{\partial t}\right]}_{(2l-1)}}}
\newcommand{\matBltwolmonesca}[0]{\ensuremath{\left[\fnc{J}\frac{\partial\xil}{\partial t}\right]_{(2l-1)}}}
\newcommand{\matBlksca}[0]{\ensuremath{\left[\fnc{J}\frac{\partial\xil}{\partial t}\right]_{\kappa}}}
\newcommand{\qk}[0]{\ensuremath{\bm{u}_{\kappa}}}
\newcommand{\qr}[0]{\ensuremath{\bm{u}_{r}}}
\newcommand{\wk}[0]{\ensuremath{\bm{w}_{\kappa}}}
\newcommand{\wtwolmone}[0]{\ensuremath{\bm{w}_{(2l-1)}}}
\newcommand{\wtwol}[0]{\ensuremath{\bm{w}_{2l}}}
\newcommand{\wrr}[0]{\ensuremath{\bm{w}_{r}}}
\newcommand{\psixmk}[0]{\ensuremath{\bm{\psi}_{\xm}^{\kappa}}}
\newcommand{\phik}[0]{\ensuremath{\bm{\varphi}_{\kappa}}}
\newcommand{\phitwolmone}[0]{\ensuremath{\bm{\varphi}_{(2l-1)}}}
\newcommand{\sk}[0]{\ensuremath{\mathcal{S}_{\kappa}}}
\newcommand{\matFxmsc}[2]{\ensuremath{\mat{F}}_{\xm}^{sc}\left(#1,#2\right)}
\newcommand{\matQsc}[2]{\ensuremath{\mat{U}}^{sc}\left(#1,#2\right)}
\newcommand{\one}[0]{\ensuremath{\overline{\bm{1}}}}
\newcommand{\onesca}[0]{\ensuremath{\bm{1}}}
\newcommand{\qtwolmone}[0]{\ensuremath{\bm{u}_{(2l-1)}}}
\newcommand{\qtwol}[0]{\ensuremath{\bm{u}_{2l}}}
\newcommand{\qkmatrix}[2]{\ensuremath{\qk\left(#1, \,#2\right)}}
\newcommand{\fxmsc}[2]{\ensuremath{\bm{\bfnc{F}}_{\xm}^{sc}\left(#1,#2\right)}}
\newcommand{\Qsc}[2]{\ensuremath{\bm{\bfnc{U}}^{sc}\left(#1,#2\right)}}
\newcommand{\qrmatrix}[2]{\ensuremath{\qr\left(#1, \,#2\right)}}
\newcommand{\wkmatrix}[2]{\ensuremath{\wk\left(#1, \,#2\right)}}
\newcommand{\wrmatrix}[2]{\ensuremath{\wrr\left(#1, \,#2\right)}}
\newcommand{\matJksca}[0]{\ensuremath{\mat{J}_{\kappa}}}
\newcommand{\vecJk}[0]{\ensuremath{\bm{J}_{\kappa}}}
\newcommand{\matAlmksca}[0]{\ensuremath{\left[\fnc{J}\frac{\partial\xil}{\partial \xm}\right]_{\kappa}}}
\newcommand{\Gsc}[2]{\ensuremath{\bfnc{G}^{sc}}\left(#1,#2\right)}
\newcommand{\vkmatrix}[2]{\ensuremath{\vk\left(#1, \,#2\right)}}
\newcommand{\vk}[0]{\ensuremath{\bm{v}_{\kappa}}}
\newcommand{\vr}[0]{\ensuremath{\bm{v}_{r}}}
\newcommand{\gk}[0]{\ensuremath{\bm{g}_{\kappa}}}
\newcommand{\gr}[0]{\ensuremath{\bm{g}_{r}}}
\newcommand{\hk}[0]{\ensuremath{\bm{h}_{\kappa}}}
\newcommand{\hr}[0]{\ensuremath{\bm{h}_{r}}}
\newcommand{\vrmatrix}[2]{\ensuremath{\vr\left(#1, \,#2\right)}}
\newcommand{\gkmatrix}[2]{\ensuremath{\gk\left(#1, \,#2\right)}}
\newcommand{\grmatrix}[2]{\ensuremath{\gr\left(#1, \,#2\right)}}
\newcommand{\matGsc}[2]{\ensuremath{\mat{G}^{sc}}\left(#1,#2\right)}
\newcommand{\Dzeta}[0]{\ensuremath{\overline{\mat{D}}_{\zeta}}}
\newcommand{\Ezetasca}[0]{\ensuremath{\mat{E}_{\zeta}}}
\newcommand{\Ezeta}[0]{\ensuremath{\overline{\mat{E}}_{\zeta}}}
\newcommand{\Cij}[2]{\ensuremath{\mat{C}_{#1,#2}}}
\newcommand{\Chatij}[2]{\ensuremath{\hat{\mat{C}}_{#1,#2}}}
\newcommand{\xia}[0]{\ensuremath{\xi_{a}}}
\newcommand{\Fxmv}[0]{\ensuremath{\bm{\fnc{F}}_{\xm}^{(v)}}}
\newcommand{\FI}[0]{\ensuremath{\bm{f}_{I}}}
\newcommand{\SATI}[0]{\ensuremath{\bm{SAT}_{I}}}
\newcommand{\matChatijk}[3]{\ensuremath{\left[\hat{\mat{C}}_{#1,#2}\right]_{#3}}}
\newcommand{\Thetaak}[2]{\ensuremath{\bm{\theta}_{#1}^{#2}}}
\newcommand{\IPk}[0]{\ensuremath{\bm{I}_{\mathrm{P}}^{\kappa}}}
\newcommand{\Dxia}[0]{\ensuremath{\overline{\mat{D}}_{\xi_{a}}}}
\newcommand{\Exia}[0]{\ensuremath{\overline{\mat{E}}_{\xi_{a}}}}
\newcommand{\wtwoa}[0]{\ensuremath{\bm{w}_{2a}}}
\newcommand{\wtwoamone}[0]{\ensuremath{\bm{w}_{(2a-1)}}}
\newcommand{\Rxiaalpha}[0]{\ensuremath{\overline{\mat{R}}_{\alphaa}}}
\newcommand{\Rxiabeta}[0]{\ensuremath{\overline{\mat{R}}_{\betaa}}}
\newcommand{\alphaa}[0]{\ensuremath{\alpha_{a}}}
\newcommand{\betaa}[0]{\ensuremath{\beta_{a}}}
\newcommand{\Pxiaortho}[0]{\ensuremath{\overline{\mat{P}}_{\perp\xia}}}
\begin{document}

\maketitle

\begin{abstract}
New entropy stable spectral collocations schemes of arbitrary order of accuracy are developed for the unsteady 3-D Euler and Navier-Stokes equations on dynamic unstructured grids. To take into account the grid motion and deformation, we use an arbitrary Lagrangian-Eulerian (ALE) formulation. As a result, moving and deforming hexahedral grid elements are individually mapped onto a cube in the fixed reference system of coordinates. The proposed scheme is constructed by using the skew-symmetric form of the Navier-Stokes equations, which are discretized by using summation-by-parts spectral collocation operators that preserve the conservation properties of the original governing equations. Furthermore, the metric coefficients are approximated such that the geometric conservation laws (GCL) are satisfied exactly on both static and dynamic grids. To make the scheme entropy stable, a new entropy conservative flux  is derived for the 3-D Euler and Navier-Stokes equations on dynamic unstructured grids. The new flux preserves the design order of accuracy of the original spectral collocation scheme and guarantees the entropy conservation on moving and deforming grids. We present numerical results demonstrating design order of accuracy and freestream preservation properties of the new schemes for both the Euler and Navier-Stokes equations on moving and deforming unstructured grids.
\end{abstract}

\begin{keywords}
   entropy stability, summation-by-parts operators, spectral collocation
  schemes, the Navier-Stokes equations, geometric conservation laws, unstructured grids
\end{keywords}

\begin{AMS}
  65M12, 65M70
\end{AMS}
\section{Introduction}
In many fluid dynamics applications including fluid-structure interaction,  maneuvering, free-surface flows, and others, the domain boundaries undergo large
displacements and deformations. One of the most widely used approaches for solving this class of problems is based on the arbitrary Lagrangian-Eulerian (ALE) 
framework that maps the Navier-Stokes equations in a reference frame wherein the computational grid can move and deform independently of the fluid flow 
\cite{Donea1982, Hughes1981}. It should be noted that the grid motion and deformation may destroy both the conservation and stability properties of numerical methods based on the ALE formulation \cite{Farhat1996}, thus indicating that a rigorous analysis is required for constructing stable ALE-based numerical schemes for the Euler and Navier-Stokes equations in moving and deforming domains.

In the ALE methodology, it is desirable to represent the Euler and Navier-Stokes equations
in strong conservation form to satisfy the conservation laws \ in time--dependent curvilinear coordinates. 
The continuous Euler and Navier-Stokes equations can be recast in strong conservation form if the
geometric conservation law (GCL) equations are satisfied, which is always the case if a mapping between
the physical and time-dependent curvilinear coordinates is diffeomorphism (e.g., see \cite{Fisher2012phd}). 
Though the GCL equations are satisfied exactly at the continuous level, this is not necessarily the case at the discrete level. 
A concept of the discrete geometric conservation laws was first introduced by Thomas and Lombard in \cite{Thomas1979}.
To satisfy the discrete GCL equations exactly and improve the accuracy of the numerical solution on moving and deforming grids, 
the metric coefficients in \cite{Thomas1979} are first recast in conservation law form and 
then discretized by the same finite difference scheme used for approximating the Navier-Stokes equations. 

It should be noted that satisfying the discrete GCL equations improves not only the accuracy 
but also stability of a numerical scheme on moving grids.  Formaggia and Nobile show that for a linear
advection-diffusion equation,  the first-order discrete geometric  conservation law is a sufficient condition
for the backward Euler implicit scheme to be unconditionally stable. In \cite{Farhat2001}, it is proven
that for a nonlinear scalar hyperbolic conservation law equation, the discrete GCL equations are necessary 
and sufficient condition to guarantee that the 1st- and 2nd-order total variation diminishing (TVD) finite volume schemes
are nonlinearly stable in the sense that they satisfy the discrete maximum principle.
 
Recently, provably stable ALE formulations have been developed for high-order finite difference, discontinuous Galerkin, and spectral collocation methods. Nikkar and Nordstr\"om use SBP operators both in time and space to construct high-order, fully discrete, conservative, energy stable
finite-difference schemes for the skew-symmetric form of linear constant-coefficient hyperbolic systems in deforming domains \cite{Nordstrom2015}. This approach is extended to semi-discrete discontinuous Galerkin spectral collocation discretizations in \cite{Kopriva2016}.
For nonlinear conservation laws, the current state of the art is entropy stable schemes on static meshes.
In \cite{Fisher2012phd, Carpenter2014, CFNPSY2016}, high-order  entropy stable finite-difference and spectral collocation schemes based on summation-by-parts (SBP) operators  that satisfy the discrete GCL equations  are constructed for the 3-D unsteady Navier-Stokes equations on static curvilinear grids. These entropy stable methods have recently been extended to entropy stable spectral collocation 
weighted essentially non-oscillatory (WENO) \cite{Yamaleev2017} and  fully discrete schemes \cite{Friedrich2018} on static grids.

Despite the progress in  ALE formulations, so far, no high-order provably stable numerical schemes have been developed
for the unsteady compressible Euler and Navier-Stokes equations on dynamic grids. To address this problem, we develop
new high-order spectral collocation schemes that are entropy stable, conservative, and satisfy the discrete GCL equations exactly on moving 
and deforming unstructured grids. The proposed scheme is constructed by discretizing  the skew-symmetric form of the Navier-Stokes equations using SBP operators. A new entropy conservative flux is derived for the 3-D Euler and Navier-Stokes equations. 
The new flux preserves the design order of accuracy of the baseline scheme and provides entropy stability on moving and deforming grids. 

The paper is organized as follows. The notation used in the paper is laid out in \cref{sec:notation}. 
The continuous geometric conservation law (GCL) equations and entropy stability analysis of the Navier-Stokes equations in moving and deforming domains are reviewed in \cref{sec:continuous}. The numerical algorithm and its analysis are presented in \cref{sec:discrete}. Implementational details are discussed in 
\cref{sec:implementation}, while results are presented in \cref{sec:results}. Finally, conclusions are drawn in \cref{sec:conclusions}.

\section{Notation and definitions}\label{sec:notation}
Partial differential equations (PDEs) under consideration are discretized on cubes having Cartesian computational coordinates denoted by 
the 3-tuple $(\xione,\xitwo,\xithree)$, where the physical coordinates are denoted by the 3-tuple 
$(\xone,\xtwo,\xthree)$. Vectors are represented by lowercase bold font, for example 
\begin{equation*}
\bm{\xi}_{1} = \left[\xi_{1,1},\dots,\xi_{1,N_{1}}\right]\Tr,
\end{equation*}
while matrices are represented using sans-serif font, for example, $\mat{A}$. Continuous 
scalar functions on a space-time domain are denoted by capital letters in script font.  For example, 
\begin{equation*}
\fnc{U}\left(\xione,\xitwo,\xithree,t\right)\in L^{2}\left(\left[\alphaone,\betaone\right]\times
\left[\alphatwo,\betatwo\right]\times\left[\alphathree,\betathree\right]\times\left[0,T\right]\right)
\end{equation*}
represents a square integrable function, where $t$ is the temporal coordinate. The restriction of such 
functions onto a set of mesh nodes is denoted by lower case bold font. For example, the restriction of $\fnc{U}$ onto a grid of 
$\NOne\times\NTwo\times\NThree$ nodes is given by the vector
\begin{equation*}
\bm{u} = \left[\fnc{U}\left(\xi_{1,1},\xi_{2,1},\xi_{3,1},t\right),\dots,\fnc{U}\left(\xi_{1,\NOne},
\xi_{2,\NTwo},\xi_{3,\NThree},t\right)\right]\Tr.
\end{equation*}
One-dimensional SBP operators are used to discretize derivatives and are extended to multiple dimensions using tensor 
products. 
\begin{definition}\label{def:tens}
The tensor product between the matrix $\mat{A}\in\mathbb{R}^{n,m}$ and $\mat{B}\in\mathbb{R}^{p,q}$ 
is denoted by 

\begin{equation*}
\mat{A}\otimes\mat{B}\equiv
\left[
\begin{array}{ccc}
a_{1,1}\mat{B}&\dots&a_{1,m}\mat{B}\\
\vdots&&\vdots\\
a_{n,1}\mat{B}&\dots&a_{n,m}\mat{B}
\end{array}
\right].
\end{equation*}
\end{definition}
For basic properties of tensor products, the reader is referred to~\cite{Horn2012}.
%
%
The Hadamard product is used in the construction of entropy conservative/stable discretizations and is defined by
\begin{definition}\label{def:had}
The Hadamard product between the matrix $\mat{A}\in\mathbb{R}^{n\times n}$ and $\mat{B}\in\mathbb{R}^{n\times n}$ is denoted by 
\begin{equation*}
\mat{A}\circ\mat{B} \equiv 
\left[
\begin{array}{ccc}
a_{11}b_{11}&\dots&a_{1n}b_{1n}\\
\vdots&&\vdots\\
a_{n1}b_{n1}&\dots&a_{nn}b_{nn}
\end{array}
\right].
\end{equation*}
\end{definition}
%
%
%
For basic properties of Hadamard products, we refer the reader to~\cite{Horn2012}.

SBP operators are matrix difference operators that approximate derivatives at mesh nodes and are 
characterized by their exactness in differentiating monomials, thus, an order $p$ SBP operator is 
one that differentiates exactly monomials up to degree $p$. 
Oftentimes monomials are discussed and the following notation is used:
\[
\bxil^{k} \equiv \left[\xi_{l,1}^{k},\dots,\xi_{l,\Nl}^{k}\right]\Tr,
\]
with the convention that $\bxil^{k}=\bm{0}$ for $k<0$.

We now give the definition of a one-dimensional SBP operator in the $\xil$ direction, $l=1,2,3$ (also, see \cite{DCDRF2014,Fernandez2014,Svard2014}).
\begin{definition}\label{SBP}
\textbf{Summation-by-parts (SBP) operator for the first \newline derivative}: A matrix operator, 
$\DxiloneD\in\mathbb{R}^{\Nl\times\Nl}$, is an SBP operator of order $p$ approximating the derivative 
$\frac{\partial}{\partial \xil}$ on the domain $\xil\in\left[\alphal,\betal\right]$ with nodal 
distribution $\bm{\xi}_{l}$ having $\Nl$ nodes, if 
\begin{enumerate}
\item $\DxiloneD\bxil^{k}=\left(\HxiloneD\right)^{-1}\QxiloneD\bxil^{k}=k\bxil^{k-1}$, $k=0,1,\dots,p$;
\item $\DxiloneD=\left(\HxiloneD\right)^{-1}\QxiloneD$, where the norm matrix, $\HxiloneD$, is symmetric and positive definite;
\item $\QxiloneD=\left(\SxiloneD+\frac{1}{2}\ExiloneD\right)$, where $\SxiloneD=-\left(\SxiloneD\right)\Tr$, $\ExiloneD=\left(\ExiloneD\right)\Tr$, 
and $\ExiloneD$ satisfies 
\[\left(\bxil^{i}\right)\Tr\ExiloneD\bxil^{j} = \betal^{i+j}-\alphal^{i+j},\qquad i,\,j=0,1,\dots,r,\qquad r\ge p.\]
\end{enumerate}
\end{definition}
In order to impose boundary conditions or inter-element coupling using 
simultaneous-approximation-term (SAT) penalty type conditions \cite{Carpenter1999}, it is useful to 
decompose $\ExiloneD$ as the outer product of interpolation/extrapolation operators
\cite{DCDRF2014}:
\begin{equation*}
\ExiloneD = \txilbeta\txilbeta\Tr-\txilalpha\txilalpha\Tr.
\end{equation*}
The interpolation/extrapolation operators $\txilalpha$ and $\txilbeta$ are $(r+1)$th--order  accurate and therefore, satisfy the following 
accuracy conditions:
\begin{equation*}
\txilalpha\Tr\bxil^{k}=\alphal^{k},\quad\txilbeta\Tr\bxil^{k}=\betal^{k},\quad k=0,1,\dots,r.
\end{equation*}
Here, the three-dimensional Euler and Navier-Stokes equations are considered and the required derivative 
operators are given as
\begin{equation}\label{recastderiv}
\begin{array}{ll}
\M\equiv\HxioneoneD\otimes\HxitwooneD\otimes\HxithreeoneD\otimes\Ifive,&\\
\\
\Dxione\equiv\M^{-1}\Qxione,&\Qxione\equiv\QxioneoneD\otimes\HxitwooneD\otimes\HxithreeoneD\otimes\Ifive,\\
\end{array}
\end{equation}
where $\Ifive$ is a $5\times 5$ identity matrix and $\Dxitwo$ and $\Dxithree$ are defined similarly. The surface matrices are given as
\begin{equation}\label{recastE}
\begin{array}{lll}
\Exione\equiv\Exionebeta+\Exionealpha,
&\Exionealpha\equiv-\Rxionealpha\Tr\Pxioneortho\Rxionealpha,
&\Exionebeta\equiv\Rxionebeta\Tr\Pxioneortho\Rxionebeta,
\\\\
\Rxionealpha\equiv\txionealpha\Tr\otimes\Ixitwo\otimes\Ixithree\otimes\Ifive,
&\Rxionebeta\equiv\txionebeta\Tr\otimes\Ixitwo\otimes\Ixithree\otimes\Ifive,
&\Pxioneortho\equiv\HxitwooneD\otimes\HxithreeoneD\otimes\Ifive,
\end{array}
\end{equation}
and the operators in the other two computational directions are defined similarly.
\begin{remark}
It is at times necessary to consider scalar SBP operators and in such a case, the overbar notation is dropped, for example, 
\[
\mat{D}_{\xione}\equiv\DxioneoneD\otimes\Ixitwo\otimes\Ixithree.
\]
\end{remark}
\begin{remark}
While tensor-product SBP operators are used, all the theoretical results are presented in terms of multidimensional SBP operators 
and are therefore general. Along these lines, throughout the text, general nodal distributions are referenced as 
$\Ck\equiv\left\{\bm{\xi}_{i}\right\}_{i}^{N}$.
\end{remark}
The physical domain $\Omega\subset\mathbb{R}^{3}$, 
with boundary $\Gamma\equiv\partial\Omega$, with Cartesian coordinates $\left(\xone,\xtwo,\xthree\right)\subset\mathbb{R}^{3}$, 
is partitioned into $K$ nonoverlapping hexahedral elements. The domain of the $\kappa^{\text{th}}$ element is denoted by 
$\Ok$ and has boundary $\Gk$. Numerically, PDEs are solved in computational 
coordinates, $\left(\xione,\xitwo,\xithree\right)\subset\mathbb{R}^{3}$, 
where each $\Ok$ is locally transformed to $\Ohatk$, with boundary $\Ghat\equiv\partial\Ohatk$, under the following 
assumption:
\begin{assume}\label{assume:curv}
Each element in physical space is transformed using 
a local, smooth, one-to-one, and invertible curvilinear coordinate transformation that is compatible at 
shared interfaces;
thus, a grid generated by these local curvilinear coordinate transformations is watertight.
\end{assume}
Associating the index 3-tuple $(i,j,k)$ with the computational coordinate 3-tuple \newline $(\xione, \xitwo, \xithree)$, 
the face numbering convention is given in Table~\ref{tab:naming}.
\begin{table}[!t]
\begin{center}
\begin{tabular}{lc} 
index& face number\\ 
\hline
$i = 1$ & 1 \\
$i =N$ & 2 \\
$j = 1$ & 3 \\
$j = N$ & 4 \\
$k = 1 $&5 \\
$k = N$ & 6\\
\hline 
\end{tabular}
\caption{Face numbering convention}
\label{tab:naming}
\end{center}
\end{table}

\section{Continuous analysis}\label{sec:continuous}
\subsection{Governing equations}
The 3-D compressible Navier-Stokes equations in conservation law form  in the Cartesian coordinates $(\xone,\xtwo,\xthree)$ are given by
\begin{equation}\label{eq:NS_C}
\begin{split}
&\frac{\partial\U}{\partial t}+\sum\limits_{m=1}^{3}\frac{\partial \Fxm}{\partial \xm} = \sum\limits_{m=1}^{3}\frac{\partial \Fxmv}{\partial \xm}, \quad 
\forall \left(\xone,\xtwo,\xthree\right)\in\Omega,\quad t\ge 0,\\
&\U\left(\xone,\xtwo,\xthree,t\right)=\GB\left(\xone,\xtwo,\xthree,t\right),\quad\forall \left(\xone,\xtwo,\xthree\right)\in\Gamma,\quad t\ge 0,\\
&\U\left(\xone,\xtwo,\xthree,0\right)=\Gzero\left(\xone,\xtwo,\xthree,0\right), \quad 
\forall \left(\xone,\xtwo,\xthree\right)\in\Omega,
\end{split}
\end{equation}
where $\U$ is a vector of conservative variables, and  $\Fxm$, and $\Fxmv$ are the inviscid and viscous fluxes associated with the $\xm$ coordinate, respectively. 
The boundary data, $\GB$ and the initial condition, $\Gzero$, are assumed to be bounded in $L^{2}\cap L^{\infty}$. In 
addition, $\GB$ is assumed to contain boundary data that are stable in the entropy sense. 

The vector of conservative variables is given as 
\begin{equation*}
\U = \left[\rho,\rho\Vone,\rho\Vtwo,\rho\Vthree,\rho\E\right]\Tr,
\end{equation*}
where $\rho$ denotes the density, $\bfnc{V} = \left[\Vone,\Vtwo,\Vthree\right]\Tr$ is the velocity 
vector, and $\E$ is the specific total energy. The inviscid fluxes, $\Fxm, m=1,2,3$, are given by
\begin{equation*}
\Fxm = \left[\rho\Vm,\rho\Vm\Vone+\delta_{m,1}\fnc{P},\rho\Vm\Vtwo+\delta_{m,2}\fnc{P},\rho\Vm\Vthree+\delta_{m,3}\fnc{P},\rho\Vm\fnc{H}\right]\Tr, 
\end{equation*}
where $\fnc{P}$ is the pressure, $\fnc{H}$ is the specific total enthalpy and $\delta_{i,j}$ is the 
Kronecker delta (we have broken our convention for continuous scalar functions in the definition of $\rho$).

The viscous fluxes, $\Fxmv, m=1,2,3$, are defined as
\begin{equation}\label{eq:Fv}
\Fxmv=\left[0,\tau_{1,m},\tau_{2,m},\tau_{3,m},\sum\limits_{i=1}^{3}\tau_{i,m}\fnc{U}_{i}-\kappa\frac{\partial \fnc{T}}{\partial\xm}\right]\Tr.
\end{equation} 
The viscous stresses are given by
\begin{equation}\label{eq:tau}
\tau_{i,j} = \mu\left(\frac{\partial\fnc{V}_{i}}{\partial x_{j}}+\frac{\partial\fnc{V}_{j}}{\partial x_{i}}
-\delta_{i,j}\frac{2}{3}\sum\limits_{n=1}^{3}\frac{\partial\fnc{V}_{n}}{\partial x_{n}}\right),
\end{equation}
where $\mu(T)$ is the dynamic viscosity and $\kappa(T)$ is the thermal conductivity (not to be confused with the choice of 
parameter for element numbering). 

To close the Navier-Stokes equations, Eq. (\ref{eq:NS_C}), the following constituent relations are used:
\begin{equation*}
\fnc{H} = c_{\fnc{P}}\fnc{T}+\frac{1}{2}\bfnc{v}\Tr\bfnc{v},\quad \fnc{P} = \rho R \fnc{T},\quad R = \frac{R_{u}}{M_{w}},
\end{equation*}
where $\fnc{T}$ is the temperature, $R_{u}$ is the universal gas constant, $M_{w}$ is the molecular weight of the gas, 
and $c_{\fnc{P}}$ is the specific heat capacity at constant pressure. Finally, the specific thermodynamic entropy is given as 
\begin{equation}
\label{eq:tentropy}
s=\frac{R}{\gamma-1}\log\left(\frac{\fnc{T}}{\fnc{T}_{\infty}}\right)-R\log\left(\frac{\rho}{\rho_{\infty}}\right),\quad \gamma=\frac{c_{p}}{c_{p}-R},
\end{equation}
where $\fnc{T}_{\infty}$ and $\rho_{\infty}$ are reference temperature and density, respectively
(the stipulated convention has been broken here and $s$ has been used rather than ${S}$ for reasons that will be discussed in \cref{sec:continuous_entropy}). 

\subsection{Geometric Conservation Law (GCL) Equations}
\label{sec:GCL}
To solve the Navier-Stokes equations in complex geometries with moving and deforming boundaries, we use 
the arbitrary Lagrangian-Eulerian (ALE) formulation. A dynamic unstructured grid in the physical domain is generated by individually mapping a reference domain $(\xi_1, \xi_2, \xi_3) \in \hat{\Omega} = [-1,1]^3$ with time $\tau$ onto each grid element in the physical domain $(x_1, x_2, x_3) \in \Omega$ with time $t$.
Assuming that each individual transformation 
\begin{equation*}
\begin{array}{l l}
t         & = \tau, \\
\bm{x} & = \bm{x}(\tau, \bm{\xi}),
\end{array}
\end{equation*}
is a diffeomorphism, it can be described by the following Jacobian matrix: 
\begin{equation*}
\frac{\partial(t,\bm{x})}{\partial(\tau,\bm{\xi})} =\left[
\begin{array}{c c c c}
1  &  0  &  0  &  0  \\
\frac{\partial x_1}{\partial \tau}  &  \frac{\partial x_1}{\partial \xi_1} & \frac{\partial x_1}{\partial \xi_2} & \frac{\partial x_1}{\partial \xi_3}  \\
\frac{\partial x_2}{\partial \tau}  &  \frac{\partial x_2}{\partial \xi_1} & \frac{\partial x_2}{\partial \xi_2} & \frac{\partial x_2}{\partial \xi_3}  \\
\frac{\partial x_3}{\partial \tau}  &  \frac{\partial x_3}{\partial \xi_1} & \frac{\partial x_3}{\partial \xi_2} & \frac{\partial x_3}{\partial \xi_3} 
\end{array}
\right], \quad J = \left| \frac{\partial(t,\bm{x})}{\partial(\tau,\bm{\xi})} \right| .
\end{equation*}

Taking into account that 
\begin{equation*}
\begin{split}
&\frac{\partial}{\partial t}      = \frac{\partial}{\partial \tau} + \sum\limits_{l=1}^3 \frac{\partial \xi_l}{\partial t}\frac{\partial}{\partial \xi_l}, \\
&\frac{\partial}{\partial x_m} = \sum\limits_{l=1}^3 \frac{\partial \xi_l}{\partial x_m}\frac{\partial}{\partial \xi_l}, \quad m=1, 2, 3,
\end{split}
\end{equation*}
and multiplying Eq. (\ref{eq:NS_C}) by the metric Jacobian, $J$, the Navier-Stokes equations can be recast 
in the time-dependent curvilinear coordinates $(\xi_1, \xi_2, \xi_3, \tau)$ as follows:
\begin{equation}
\label{eq:NS_crv_NC}
J\frac{\partial\U}{\partial \tau} + \sum\limits_{l=1}^{3}J\frac{\partial \xi_l}{\partial t}\frac{\partial \U}{\partial \xi_l} 
+  \sum\limits_{m, l=1}^{3}J\frac{\partial \xi_l}{\partial \xm}\frac{\partial \Fxm}{\partial \xi_l} = 
\sum\limits_{m,l=1}^{3}J\frac{\partial \xi_l}{\partial \xm}\frac{\partial \Fxmv}{\partial \xi_l}.
\end{equation}
In contrast to the original Navier-Stokes equations in the Cartesian coordinates,  Eq. (\ref{eq:NS_crv_NC}) is not written in divergence from.
Using the product rule, the Navier-Stokes equations in the curvilinear coordinates can be represented in conservation law from as follows:
\begin{equation}
\label{eq:NS_crv_C}
\begin{array}{l}
\frac{\partial J\U}{\partial \tau} + \sum\limits_{l=1}^{3}\frac{\partial}{\partial \xi_l}\left( J\frac{\partial \xi_l}{\partial t}\U\right)
+  \sum\limits_{m, l=1}^{3}\frac{\partial}{\partial \xi_l}\left(J\frac{\partial \xi_l}{\partial \xm}\left(\Fxm-\Fxm^{(v)}\right)\right)                   \\
-\U\left[\frac{\partial J}{\partial \tau} + \sum\limits_{l=1}^3 \frac{\partial}{\partial \xi_l}\left( J \frac{\partial \xi_l}{\partial t} \right) \right]
-  \left(\Fxm-\Fxm^{(v)}\right) \sum\limits_{m, l=1}^{3}\frac{\partial}{\partial \xi_l}\left(J\frac{\partial \xi_l}{\partial \xm}\right) = 0.
\end{array}
\end{equation}
Based on the definition of the metric coefficients, the following identities hold:
\begin{equation}
\label{eq:GCL_C}
\begin{array}{l}
\frac{\partial J}{\partial \tau} + \sum\limits_{l=1}^3 \frac{\partial}{\partial \xi_l}\left( J \frac{\partial \xi_l}{\partial t} \right) = 0, \\
\sum\limits_{ l=1}^{3}\frac{\partial}{\partial \xi_l}\left(J\frac{\partial \xi_l}{\partial \xm}\right) = 0, \quad m=1, 2, 3,
\end{array}
\end{equation}
which are called the geometric conservation law (GCL) equations. Taking into account the above identities, the last two terms in Eq. (\ref{eq:NS_crv_C})
vanish and the Navier-Stokes equations can be rewritten in the following fully conservative form:
\begin{equation}
\label{eq:NS_CC}
\frac{\partial J\U}{\partial \tau} + \sum\limits_{l=1}^{3}\frac{\partial}{\partial \xi_l}\left( J\frac{\partial \xi_l}{\partial t}\U\right)
+  \sum\limits_{m, l=1}^{3}\frac{\partial}{\partial \xi_l}\left(J\frac{\partial \xi_l}{\partial \xm}\left(\Fxm-\Fxm^{(v)}\right)\right) = 0.
\end{equation}
Note that the GCL equations (\ref{eq:GCL_C}) guarantee that any physically meaningful constant vector of conservative variables 
$\U = \bm{const}$ is a solution of the Navier-Stokes equations \eqref{eq:NS_CC}.  
Though, the GCL equations \eqref{eq:GCL_C} are satisfied exactly at the continuous level,
this is not the case at the discrete level \cite{Thomas1979}. A detailed analysis on how the Navier-Stokes equations and the corresponding metric 
coefficients should be discretized to satisfy the GCL equations is presented in \cref{sec:discrete}.

\subsection{Continuous Entropy Analysis}\label{sec:continuous_entropy}
A necessary condition for selecting a unique, physically
relevant solution among possibly many weak solutions of Eq. (\ref{eq:NS_C})  is the entropy
inequality. It is well known that the entropy inequality holds for the Navier-Stokes equations in the 
Cartesian and curvilinear coordinates that are independent of time (e.g., see \cite{CFNPSY2016}). In the present
analysis, we show that the entropy inequality also holds for the Navier-Stokes equations
in time-dependent curvilinear coordinates. 
 
The compressible Navier-Stokes equations are equipped with a convex scalar entropy function $\mathcal{S}$ and the corresponding entropy flux $\mathcal{F}$,
which are given by
\begin{equation}
\begin{array}{l}
\mathcal{S} = -\rho s, \\
\mathcal{F} = -\rho s \bm{V},
\end{array}
\end{equation}
where $s$ is the thermodynamic entropy defined by Eq. (\ref{eq:tentropy}) and $\bm{V}$ is the velocity vector.
Note that the mathematical entropy $\mathcal{S}$ has the opposite sign from the thermodynamic
entropy. Thus, the mathematical entropy across a shock decreases rather than increases. This nomenclature is used throughout 
the paper.

The entropy function  $\mathcal{S}$ satisfies the following properties:
\begin{enumerate}
\item 
$\mathcal{S}(\U)$ is convex and its Hessian matrix, $\frac{\partial^2 \mathcal{S}}{\partial U^2}$, is positive definite provided that $\rho>0$ and $T>0$ $\forall \bm{x} \in \Omega$, thus yielding 
a one-to-one mapping from the conservative to entropy variables that are defined as follows: 
\begin{equation}
\label{eq:Evariables}
\bm{W}\Tr \equiv \frac{\partial \mathcal{S}}{\partial \U}= \left[ \frac hT -s -\frac{\bm{V}\Tr\bm{V}}{2T}, \frac{\Vone}T, \frac{\Vtwo}T, \frac{\Vthree}T, -\frac 1T \right]\Tr.
\end{equation}
\item
The entropy variables satisfy the following compatibility relations for all inviscid fluxes of the compressible Navier-Stokes equations:
\begin{equation}
\label{eq:contract}
\bm{W}\Tr \frac{\partial \Fxm}{\partial x_m} = \bm{W}\Tr \frac{\partial \Fxm}{\partial \U} \frac{\partial \U}{\partial x_m}=\frac{\partial \mathcal{F}_{x_m}}{\partial U}\frac{\partial \U}{\partial x_m} = \frac{\partial \mathcal{F}_{x_m}}{\partial x_m}, \quad m=1, 2, 3,
\end{equation}
where $\mathcal{F}_{x_m}$ is the entropy flux in the $m$-th spatial direction.
\item
The entropy variables symmetrize the compressible Navier-Stokes equations, which can be recast in terms of $\bm{W}$ as follows:
\begin{equation}\label{eq:NS_symmetric}
\begin{split}
&\frac{\partial \U}{\partial \bfnc{W}}\frac{\partial \bfnc{W}}{\partial t}+ \sum\limits_{m=1}^{3} \frac{\partial \Fxm}{\partial \bfnc{W}}\frac{\partial \bfnc{W}}{\partial \xm} =\sum\limits_{l,m=1}^{3}\frac{\partial}{\partial x_l}\left(\Cij{l}{m}\frac{\partial\bfnc{W}}{\partial \xm}\right),
\end{split}
\end{equation}
with the symmetry conditions $\frac{\partial U}{\partial W} = \left(\frac{\partial U}{\partial W}\right)\Tr$, $\frac{\partial \Fxm}{\partial U} = \left( \frac{\partial \Fxm}{\partial U} \right)\Tr$, and $\Cij{l}{m} = (\Cij{l}{m})\Tr$. 
Furthermore, $\frac{\partial U}{\partial W}$ is positive definite, and the matrices $\Cij{l}{m}$ are positive semi-definite, provided that $\rho>0$ and $T>0$ $\forall \bm{x} \in \Omega$.
Note that the term on the right--hand side of Eq. (\ref{eq:NS_symmetric}) is a recast of the viscous fluxes in terms of entropy variables, that is, 
\begin{equation}\label{eq:Fxment}
\Fxmv=\sum\limits_{j=1}^{3}\Cij{m}{j}\frac{\partial\bfnc{W}}{\partial x_{j}}.
\end{equation}
\end{enumerate}
It has been proven by Godunov in \cite{Godunov1961} that if Eq. (\ref{eq:NS_C}) is symmetrized by introducing new variables $\bm{W}$ and
 $\bm{W}$ is a convex function of $\varphi$, then the entropy function and the corresponding entropy flux satisfy the following equations:
\begin{equation}
\label{eq:potential}
\varphi = \bm{W}\Tr \U - \mathcal{S},
\end{equation}
\begin{equation}
\label{eq:potential_flux}
 \psi_m = \bm{W}\Tr \Fxm - \mathcal{F}_{x_m}, \quad m=1, 2, 3,
\end{equation}
where the functions $\varphi$ and $\bm{\psi}$ are called the entropy potential and entropy potential flux, respectively.

We now show that the entropy inequality holds for the compressible Navier-Stokes equations in the time-dependent curvilinear coordinates.
Contracting Eq. (\ref{eq:NS_CC}) with the entropy variables given by Eq. (\ref{eq:Evariables}) yields
\begin{equation}
\label{eq:Eeq}
\begin{split}
& \overbrace{\bfnc{W}\Tr\frac{\partial J\U}{\partial \tau}}^{I} + \sum\limits_{l=1}^{3}\overbrace{\bfnc{W}\Tr\frac{\partial}{\partial \xi_l}\left( J\frac{\partial \xi_l}{\partial t}\U\right)}^{II}
+  \sum\limits_{m, l=1}^{3}\overbrace{\bfnc{W}\Tr\frac{\partial}{\partial \xi_l}\left(J\frac{\partial \xi_l}{\partial \xm}\Fxm\right)}^{III} =         \\
& \sum\limits_{l,n=1}^{3}\overbrace{\bfnc{W}\Tr\frac{\partial}{\partial\xil}\left(\Chatij{l}{n}\frac{\partial\bfnc{W}}{\partial \xi_n}\right)}^{IV}.
\end{split}
\end{equation}
The matrices $\Chatij{l}{n}$ on the right-hand side of Eq. (\ref{eq:Eeq}) are symmetric semi-definite matrices which are given by
\begin{equation}\label{eq:Chatij}
\Chatij{l}{n} \equiv J\frac{\partial \xi_l}{\partial x_n}\sum\limits_{m,j=1}^{3}\Cij{m}{j}.
\end{equation}
For further details on how $\Cij{m}{j}$ and $\Chatij{l}{n}$ are constructed, we refer the reader to \cite{Fisher2012phd}.

Using $\bfnc{W}\Tr = \frac{\partial \mathcal{S}}{\partial \U}$, the term $I$ in Eq. (\ref{eq:Eeq}) can be manipulated as follows:
\begin{equation}\label{eq:EeqI}
 I = J \frac{\partial \mathcal{S}}{\partial \U}\frac{\partial \U}{\partial \tau}+\bfnc{W}\Tr \U\frac{\partial J}{\partial \tau} =
\frac{\partial (J \mathcal{S})}{\partial \tau}+\left(\bfnc{W}\Tr\U - \mathcal{S}\right)\frac{\partial J}{\partial \tau}.
\end{equation}
Similarly, the term $II$ is recast in the following form:
\begin{equation}\label{eq:EeqII}
\begin{split}
&II =
\sum\limits_{l=1}^3 
J\frac{\partial \xi_l}{\partial t}\frac{\partial \mathcal{S}}{\partial \U}\frac{\partial \U}{\partial \xi_l} +
\bfnc{W}\Tr\U \frac{\partial}{\partial \xi_l}\left(J\frac{\partial \xi_l}{\partial t}\right) = \\
&\sum\limits_{l=1}^3 
\frac{\partial}{\partial \xi_l}\left( J\frac{\partial \xi_l}{\partial t}\mathcal{S}\right) +
\left(\bfnc{W}\Tr\U - \mathcal{S}\right) \frac{\partial}{\partial \xi_l}\left(J\frac{\partial \xi_l}{\partial t}\right).
\end{split}
\end{equation}
Using the compatibility relations (Eq. (\ref{eq:contract})), the term $III$ is reduced to
\begin{equation}\label{eq:EeqIII}
\begin{split}
& III = \sum\limits_{l,m=1}^3 
J\frac{\partial \xi_l}{\partial \xm}\frac{\partial \mathcal{S}}{\partial \U}\frac{\partial \Fxm}{\partial \xi_l} +
\bfnc{W}\Tr\Fxm \frac{\partial}{\partial \xi_l} \left(J\frac{\partial \xi_l}{\partial \xm}\right) =                                               \\
& \sum\limits_{l,m=1}^3
\frac{\partial}{\partial \xi_l}\left( J\frac{\partial \xi_l}{\partial \xm}\mathcal{F}_{x_m}\right) +
\sum\limits_{m=1}^3 \bfnc{W}\Tr\Fxm \sum\limits_{l=1}^3 \frac{\partial}{\partial \xi_l} \left(J\frac{\partial \xi_l}{\partial \xm}\right). 
\end{split}
\end{equation}
The last term in Eq. (\ref{eq:Eeq}) can be manipulated as follows:
\begin{equation}\label{eq:EeqIV}
IV =  \sum\limits_{l,n=1}^{3}\frac{\partial}{\partial\xil}\left(\bfnc{W}\Tr\Chatij{l}{n}\frac{\partial\bfnc{W}}{\partial \xi_n}\right) 
 - \frac{\partial\bfnc{W}}{\partial \xi_l}\Tr\Chatij{l}{n}\frac{\partial\bfnc{W}}{\partial \xi_n}.
\end{equation}

Integrating Eq. (\ref{eq:Eeq}) over the physical domain and taking into account Eqs. (\ref{eq:EeqI}--\ref{eq:EeqIV}), we have
\begin{equation}\label{eq:Eineq}
\begin{split}
&\int_{\Ohat}\left[ 
\frac{\partial (J \mathcal{S})}{\partial \tau}+ \sum\limits_{l=1}^3 \frac{\partial}{\partial \xi_l}\left( J\frac{\partial \xi_l}{\partial t}\mathcal{S}\right)
+\left(\bfnc{W}\Tr\U - \mathcal{S}\right)\left( \frac{\partial J}{\partial \tau} +  \sum\limits_{l=1}^3\frac{\partial}{\partial \xi_l}\left(J\frac{\partial \xi_l}{\partial t}\right)\right)\right]\mr{d}\Ohat \\
&+\int_{\Ohat}\left[
\sum\limits_{l,m=1}^3 \frac{\partial}{\partial \xi_l}\left( J\frac{\partial \xi_l}{\partial \xm}\mathcal{F}_{x_m}\right) +
\sum\limits_{m=1}^3 \bfnc{W}\Tr\Fxm \sum\limits_{l=1}^3 \frac{\partial}{\partial \xi_l} \left(J\frac{\partial \xi_l}{\partial \xm}\right)  
\right]\mr{d}\Ohat =\\
& +\int_{\Ohat}\left[
\sum\limits_{l,n=1}^{3}\frac{\partial}{\partial\xil}\left(\bfnc{W}\Tr\Chatij{l}{n}\frac{\partial\bfnc{W}}{\partial \xi_n}\right) 
 - \frac{\partial\bfnc{W}}{\partial \xi_l}\Tr\Chatij{l}{n}\frac{\partial\bfnc{W}}{\partial \xi_n}
\right]\mr{d}\Ohat.
\end{split}
\end{equation}
The last terms in the first two lines of Eq.(\ref{eq:Eineq}) are identically equal to zero because of the GCL equations given by Eq. (\ref{eq:GCL_C}).
Using the integration-by-parts (IBP) formula, the above equation can be recast in the following form:
\begin{equation}\label{eq:Eineq.2}
\begin{split}
\int_{\Ohat}\left[ 
\frac{\partial (J \mathcal{S})}{\partial \tau}+ \sum\limits_{l=1}^3 \frac{\partial}{\partial \xi_l}\left( J\frac{\partial \xi_l}{\partial t}\mathcal{S}\right)\right]\mr{d}\Ohat = & \sum\limits_{l,m=1}^{3}\oint_{\Gamma}\left(\bfnc{W}\Tr\Chatij{l}{m}\frac{\partial\bfnc{W}}{\partial \xi_m}-\Alm \mathcal{F}_{x_m}\right)\nxil\mr{d}\Ghat \\
& -\sum\limits_{l,n=1}^{3}\int_{\Ohat}\frac{\partial\bfnc{W}}{\partial \xi_l}\Tr\Chatij{l}{n}\frac{\partial\bfnc{W}}{\partial \xi_n}\mr{d}\Ohat. 
\end{split}
\end{equation}
Taking into account that the matrices $\Chatij{l}{m}$ are positive semi-definite and assuming that the boundary conditions are entropy stable, 
Eq. (\ref{eq:Eineq.2}) becomes
\begin{equation}\label{eq:Eineq.3}
\int_{\Omega}\frac{d \mathcal{S}}{d t} = 
\int_{\Ohat}\left[ \frac{\partial (J \mathcal{S})}{\partial \tau}+ \sum\limits_{l=1}^3 \frac{\partial}{\partial \xi_l}\left( J\frac{\partial \xi_l}{\partial t}\mathcal{S}\right)\right]\mr{d}\Ohat  \leq 0.
\end{equation} 
Equation (\ref{eq:Eineq.3}) represents the entropy inequality in the domain.
Note that for the Euler equations with smooth solutions, Eq. \eqref{eq:Eineq.2} becomes an equality.
The entropy inequality \eqref{eq:Eineq.2} is only a necessary condition,
which is not by itself sufficient to guarantee the convergence to a physically relevant 
weak solution of the Navier-Stokes equations.
%

\section{Discrete analysis}\label{sec:discrete}

In this section, the various components of the entropy stable semi-discrete form for the 
compressible Navier-Stokes equations are presented. The approach taken is to first construct an entropy conservative semi-discrete form for the Euler equations, \cref{sec:Eulercon}, 
then add appropriate interface dissipation to make the scheme entropy stable, \cref{sec:Eulerstab}, and finally, 
discretize the viscous components such that entropy stability is not lost, \cref{sec:NS}. 

\subsection{Entropy conservative semi-discrete form for the Euler equations}\label{sec:Eulercon}
The skew-symmetric splitting of the Euler equations that is discretized on the $\kappa^{\rm{th}}$ element is given as
\begin{equation}\label{eq:skewEuler}
\begin{split}
\frac{1}{2}\frac{\partial\Jk\Qk}{\partial\tau}+\frac{1}{2}\Jk\frac{\partial\Qk}{\partial\tau}
&+\frac{1}{2}\sum\limits_{l=1}^{3}\left[
\frac{\partial}{\partial\xil}\left(\Blk\Qk\right)+\Blk\frac{\partial\Qk}{\partial \xil}
\right]\\
&+\frac{1}{2}\sum\limits_{l,m=1}^{3}\left[
\frac{\partial}{\partial\xil}\left(\Almk\Fxm\right)+\Almk\frac{\partial\Fxm}{\partial \xil}
\right] = \bm{0}.
\end{split}
\end{equation}
The skew--symmetric form is derived by averaging the conservative and nonconservative forms.
Note that the conservative form is derived by taking advantage of  the GCL conditions outlined in \cref{sec:GCL}:
\begin{equation}\label{eq:GCLtime}
\frac{\partial\Jk}{\partial\tau}+\sum\limits\frac{\partial}{\partial\xil}\left(\Blk\right)=0,
\end{equation}
\begin{equation}\label{eq:GCLspace}
\sum\limits_{l=1}^{3}\frac{\partial}{\partial\xil}\left(\Almk\right)=0\qquad m = 1,2,3.
\end{equation}

In order to achieve entropy conservation, the approximations to the spatial derivatives are constructed such that the 
continuous entropy stability analysis is mimicked in a one-to-one fashion. To do so, 
the approximation to the spatial derivatives needs to be mimetic of a special case of integration by parts (IBP), namely 
\begin{equation}\label{eq:nonIBP}
\int\bfnc{V}\Tr\frac{\partial\bfnc{G}}{\partial\zeta}\mr{d}D=
\int_{D}\frac{\partial\fnc{h}}{\partial\zeta}\mr{d}D = 
\oint_{\Gamma}\fnc{H}n_{\zeta}\mr{d}\Gamma,
\end{equation}
(this is referred to as a nonlinear IBP relation) where $\bfnc{V}$ and $\bfnc{G}\left(\bfnc{U}\right)$ are vector valued functions, 
$\fnc{h}\left(\bfnc{U}\right)$ is a scalar function, $\zeta$ is some independent 
variable over the domain $D$ with boundary $\Gamma$, and $n_{\zeta}$ is the component of the outward facing unit normal in the $\zeta$ direction.
\begin{remark}
In the current context, the two nonlinear IBP formulas that are of interest are 
\begin{equation*}
\begin{split}
&\frac{1}{2}\int_{\Ohat}\bfnc{W}\Tr\left\{
\frac{\partial\left( J\U\right)}{\partial\tau}+J\frac{\partial\U}{\partial \tau}
+\sum\limits_{l=1}^{3}\left[\frac{\partial}{\partial\xil}\left(\Bl\bfnc{U}\right)
+\Bl\frac{\partial\bfnc{U}}{\partial\xil}
\right]\right\}\mr{d}\Ohat=\\
&\int_{\Ohat}\frac{\partial J \mathcal{S}}{\partial \xil}\mr{d}\Ohat+\sum\limits_{l=1}^{3}\oint_{\Ghat}\mathcal{S}\Bl\nxil\mr{d}\Ghat,\\
&\frac{1}{2}\sum\limits_{l=1}^{3}\int_{\Ohat}\bfnc{W}\Tr\left[
\frac{\partial}{\partial\xil}\left(\Alm\Fxm\right)
+\Alm\frac{\partial\Fxm}{\partial\xil}
\right]
\mr{d}\Ohat=
\sum\limits_{l=1}^{3}\oint_{\Ghat}\mathcal{F}_{x_m}\Alm\nxil\mr{d}\Ghat,\\
&m=1,2,3,
\end{split}
\end{equation*}
where the GCL conditions~\eqref{eq:GCLtime} and~\eqref{eq:GCLspace} have been used.
\end{remark}
To introduce nonlinear SBP operators and their associated notation, the generic nonlinear
IBP property~\eqref{eq:nonIBP} and the derivative $\frac{\partial\bfnc{G}\left(\bfnc{U}\right)}{\partial \zeta}$ are used; the derivative 
is approximated using the following nonlinear SBP operator: 
\begin{equation}\label{eq:nonlinearSBP}
\left.\frac{\partial\bfnc{G}\left(\bfnc{U}\right)}{\partial \zeta}\right|_{\Ck}\approx
2\Dzeta\circ\matGsc{\qk}{\qk}\one.
\end{equation}
The symbol $\circ$ is the Hadamard product, i.e., element-wise multiplication while $\Ck$ is the set of nodes on the $\kappa^{\rm{th}}$ element; therefore, the notation 
\[
\left.\frac{\partial\bfnc{G}\left(\bfnc{U}\right)}{\partial \zeta}\right|_{\Ck}
\]
denotes the vector constructed by evaluating $\frac{\partial\bfnc{G}\left(\bfnc{U}\right)}{\partial \zeta}$ at the nodes $\Ck$. 
The vector $\qk$ is a vector of vectors corresponding to the discrete solution of the $5$ 
conservation equations at each node ordered with the $\xithree-$direction varying most rapidly, 
then $\xitwo$, and then $\xione$. The vector $\one$ is defined as $\one\equiv\bm{1}\otimes\onefive$, where 
$\bm{1}$ is a vector of ones of size $N$, the number of nodes in each element, and $\onefive$ 
is a vector of ones of size $5$. It is necessary to discuss the components of $\qk$ at each node and 
the values of conservative variables at all nodes in an element. For this purpose a matrix like notation, $\qkmatrix{i}{j}$, 
is introduced to index the vector such that $\qkmatrix{i}{:} = \qk((i-1)5:i5)$ and $\qkmatrix{:}{j}$ is the vector constructed 
from the $j^{\rm{th}}$ conserved variable at all nodes. Furthermore, the two-argument matrix function $\matGsc{\qk}{\qk}$ is 
a block matrix of diagonal blocks, that is 
\begin{equation*}
\begin{split}
&\matGsc{\qk}{\qr} \equiv \\
&\left[
\begin{array}{ccc}
\diag(\Gsc{\qkmatrix{1}{:}}{\qrmatrix{1}{:}})&\dots&\diag(\Gsc{\qkmatrix{1}{:}}{\qrmatrix{N}{:}})\\
\vdots&&\vdots\\
\diag(\Gsc{\qkmatrix{N}{:}}{\qrmatrix{1}{:}})&\dots&\diag(\Gsc{\qkmatrix{N}{:}}{\qrmatrix{N}{:}})
\end{array}
\right],
\end{split}
\end{equation*}
and it is of size $\left(5\,N\right)\times\left(5\,N\right)$. The five element column vectors, 
\[
\Gsc{\qkmatrix{i}{:}}{\qrmatrix{j}{:}},
\]
are constructed from two-point flux functions that are symmetric in their arguments and consistent, thus,
\begin{equation*}
\begin{split}
&\Gsc{\qkmatrix{i}{:}}{\qrmatrix{j}{:}} = \Gsc{\qrmatrix{j}{:}}{\qkmatrix{i}{:}},\\
&\Gsc{\qkmatrix{i}{:}}{\qkmatrix{i}{:}} = \bfnc{G}\left(\qkmatrix{i}{:}\right),
\end{split}
\end{equation*}
where the superscript denotes that this is an entropy conservative two-point flux function. This implies that 
\begin{equation}\label{eq:symmf}
\matGsc{\qk}{\qk}=\matGsc{\qk}{\qk}\Tr,\quad\matGsc{\qk}{\qr}=\matGsc{\qr}{\qk}\Tr.
\end{equation}
Finally, in order to construct entropy conservative schemes, the two-point flux 
functions must satisfy the Tadmor shuffle condition~\cite{Tadmor2003}:
\begin{equation}\label{eq:shuffle}
\begin{split}
&\left(\vkmatrix{i}{:}-\vrmatrix{j}{:}\right)\Tr\Gsc{\qkmatrix{i}{:}}{\qrmatrix{j}{:}}=\\
&\left(\vkmatrix{i}{:}\Tr\gkmatrix{i}{:}-\hk(i)\right)
-\left(\vrmatrix{j}{:}\Tr\grmatrix{j}{:}-\hr(j)\right).
\end{split}
\end{equation}

The next theorem demonstrates 
that the nonlinear SBP operator is mimetic of the nonlinear IBP 
relation~\eqref{eq:nonIBP} to high order~\cite{Fernandez2018prefine}:
\begin{theorem}\label{thrm:nonIBP}
Consider a two-argument matrix flux 
function $\matGsc{\qk}{\qr}$ constructed from the two point 
flux function $\Gsc{\qkmatrix{i}{:}}{\qrmatrix{j}{:}}$ that 
 is symmetric, consistent, and satisfies the Tadmor shuffle 
condition~\eqref{eq:shuffle}. Then the nonlinear SBP operator 
is an approximation to the derivative as below,
\[
2\Dzeta\circ\matGsc{\qk}{\qk}\one\approx\frac{\partial\bfnc{G}}{\partial\zeta},
\]
and satisfies the following nonlinear SBP property,
\begin{equation}\label{eq:nonSBP}
\begin{split}
\overbrace{2\vk\Tr\M\Dzeta\circ\matGsc{\qk}{\qk}\one}
^{\displaystyle\approx\int_{D}\bfnc{V}\frac{\partial\bfnc{G}}{\partial\zeta}\mr{d}D}
& =
\overbrace{
-\onesca\Tr\Ezetasca\left(\vk\Tr\gk-\hk\right) 
+\vk\Tr\Ezeta\circ\matGsc{\qk}{\qk}\one}
^{\displaystyle\approx\oint_{\Gamma}\fnc{h}n_{\zeta}\mr{d}\Gamma},
\end{split}
\end{equation}
where the LHS and RHS are high-order approximations to the LHS and RHS of 
the nonlinear IBP property~\eqref{eq:nonIBP}.
\end{theorem}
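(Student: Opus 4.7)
The plan is to establish the two claims of the theorem separately: first the consistency statement $2\Dzeta\circ\matGsc{\qk}{\qk}\one\approx\partial\bfnc{G}/\partial\zeta$, and then the nonlinear SBP identity~\eqref{eq:nonSBP}.

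For consistency I would fix a node $i$ and Taylor-expand each two-point flux $\Gsc{\qkmatrix{i}{:}}{\qkmatrix{j}{:}}$ about $\qkmatrix{i}{:}$. Symmetry of the flux forces its first-argument and second-argument linearizations to coincide, and consistency along the diagonal $\bfnc{U}_{1}=\bfnc{U}_{2}$ pins each of them to $\tfrac{1}{2}\bfnc{G}'(\qkmatrix{i}{:})$. Substituting into row $i$ of $(\Dzeta\circ\matGsc{\qk}{\qk})\one$ and using $\Dzeta\one=\bm{0}$ (property~1 of \cref{SBP} at $k=0$) eliminates the zeroth-order contribution, leaving $\tfrac{1}{2}\bfnc{G}'(\qkmatrix{i}{:})(\Dzeta\qk)_{i}$ modulo quadratic remainders. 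Multiplication by two then reproduces $\partial\bfnc{G}/\partial\zeta$ at node $i$ to the accuracy of $\Dzeta$.

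For the discrete IBP identity I would begin from $\M\Dzeta=\overline{\mat{S}}_{\zeta}+\tfrac{1}{2}\Ezeta$ (property~3 of \cref{SBP}, lifted to the block setting) together with the observation that a diagonal norm $\M$ commutes through a Hadamard product, i.e., $\M(A\circ B)=(\M A)\circ B$. This splits the LHS of~\eqref{eq:nonSBP} as
\begin{equation*}
2\vk\Tr\M\Dzeta\circ\matGsc{\qk}{\qk}\one \;=\; 2\vk\Tr\overline{\mat{S}}_{\zeta}\circ\matGsc{\qk}{\qk}\one \;+\; \vk\Tr\Ezeta\circ\matGsc{\qk}{\qk}\one,
\end{equation*}
so that only the skew-symmetric piece requires further work. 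Writing that piece in index form and exchanging the summation indices $i\leftrightarrow j$, the skew-symmetry of $\overline{\mat{S}}_{\zeta}$ combined with the flux symmetry~\eqref{eq:symmf} yields
\begin{equation*}
2\sum_{i,j}(\mat{S}_{\zeta})_{ij}\,\vkmatrix{i}{:}\Tr\Gsc{\qkmatrix{i}{:}}{\qkmatrix{j}{:}} \;=\; \sum_{i,j}(\mat{S}_{\zeta})_{ij}\,\bigl(\vkmatrix{i}{:}-\vkmatrix{j}{:}\bigr)\Tr\Gsc{\qkmatrix{i}{:}}{\qkmatrix{j}{:}}.
\end{equation*}
The Tadmor shuffle~\eqref{eq:shuffle} then converts each integrand into the telescoping difference $\psi_{i}-\psi_{j}$, where $\psi\equiv\vk\Tr\gk-\hk$ is the nodal entropy-potential-flux vector. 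The resulting double sum collapses into a contraction of $\psi$ with the row and column sums of $\mat{S}_{\zeta}$; since $\overline{\mat{S}}_{\zeta}\one=-\tfrac{1}{2}\Ezeta\one$ (because $\Dzeta\one=\bm{0}$), each of those sums equals $\mp\tfrac{1}{2}\Ezetasca\onesca$, and after applying the symmetry of $\Ezetasca$ the two contributions combine to exactly $-\onesca\Tr\Ezetasca(\vk\Tr\gk-\hk)$, which is the surface term appearing on the RHS of~\eqref{eq:nonSBP}.

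The main obstacle I anticipate is bookkeeping rather than any new idea: one must track the block structure introduced by the tensor product with $\Ifive$ (which is what legitimizes the passage from the block operators $\overline{\mat{S}}_{\zeta}$, $\Ezeta$ to their scalar counterparts $\mat{S}_{\zeta}$, $\Ezetasca$ in the telescoping step), exploit that each block of $\matGsc{\qk}{\qk}$ is diagonal so that Hadamard and tensor products interact cleanly, and pair the row- and column-sum identities for $\overline{\mat{S}}_{\zeta}$ without a sign error. Once those three observations are lined up, the identity reduces to a one-line telescoping argument driven by the Tadmor shuffle.
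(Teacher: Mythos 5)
Your proof is correct and is, in essence, the argument the paper is pointing to: the paper gives no proof of its own here but defers to the cited reference, where the same route is taken --- write $\M\Dzeta=\overline{\mat{S}}_{\zeta}+\tfrac{1}{2}\Ezeta$ (using that the diagonal norm passes through the Hadamard product), symmetrize the skew part via $i\leftrightarrow j$ relabeling together with the flux symmetry~\eqref{eq:symmf}, telescope with the Tadmor shuffle~\eqref{eq:shuffle}, and close with $\overline{\mat{S}}_{\zeta}\one=-\tfrac{1}{2}\Ezeta\one$ and the symmetry of $\Ezeta$. Your sign bookkeeping, the block-to-scalar reduction justified by the diagonal-block structure of $\matGsc{\qk}{\qk}$, and the Taylor-expansion consistency argument (using that symmetry plus diagonal consistency force both partial linearizations of the two-point flux to equal $\tfrac{1}{2}\bfnc{G}'$) all check out.
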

\begin{proof}
The proof is given in~\cite{Fernandez2018prefine}.
\end{proof}
Theorem~\ref{thrm:nonIBP} is proven using accuracy estimates on the nonlinear SBP operators and the bilinear forms that result 
from its constituent components. In addition to Theorem~\ref{thrm:nonIBP}, the following theorem (which will be used to prove entropy conservation) 
is employed:
\begin{theorem}\label{thrm:telescope}
Consider the matrix $\mat{A}$ of size $N\times N$ and a two-argument matrix flux 
function $\matGsc{\qk}{\qr}$ constructed from the two point 
flux function $\Gsc{\bm{u}}{\bm{v}}$ that satisfies the Tadmor shuffle condition~\eqref{eq:shuffle}
and is symmetric, then
\begin{equation*}
\begin{split}
&\vk\Tr\left[\left(\overline{\mat{A}}\right)\circ\matGsc{\qk}{\qr}\right]\one-\one\Tr\left[\left(\overline{\mat{A}}\right)\circ\matGsc{\qk}{\qr}\right]\vr =\\
&\left(\vk\Tr\gk-\hk\right)\Tr\mat{A}\onesca-\onesca\Tr\mat{A}\left(\vr\Tr\gr-\hr\right),
\end{split}
\end{equation*}
where $\bm{v}\Tr\bm{g}$ is to be understood as $\left(\bm{v}\Tr\bm{g}\right)(i)\equiv\bm{v}(: , \,i)\Tr\bm{g}(: , \,i)$.
\end{theorem}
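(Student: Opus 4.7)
The plan is to expand the LHS as a double sum over mesh nodes by unpacking the block structure of $\matGsc{\qk}{\qr}$, then apply the Tadmor shuffle condition~\eqref{eq:shuffle} entry-by-entry, and finally separate the resulting double sum into the two single sums that appear on the RHS.

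First I would evaluate each bilinear form component-wise. Interpreting $\overline{\mat{A}}$ as the tensor lift $\mat{A}\otimes\Ifive$, the Hadamard product with $\matGsc{\qk}{\qr}$ preserves the block structure: since the $(i,j)$-th block of $\matGsc{\qk}{\qr}$ is $\diag\!\left(\Gsc{\qkmatrix{i}{:}}{\qrmatrix{j}{:}}\right)$, the Hadamard product scales this block to $a_{ij}\,\diag\!\left(\Gsc{\qkmatrix{i}{:}}{\qrmatrix{j}{:}}\right)$. Using $\one=\bm{1}\otimes\onefive$ together with the identity $\diag(\bm{g})\onefive=\bm{g}$, one obtains
\begin{equation*}
\vk\Tr\!\left[\overline{\mat{A}}\circ\matGsc{\qk}{\qr}\right]\!\one = \sum_{i,j=1}^{N} a_{ij}\,\vkmatrix{i}{:}\Tr\Gsc{\qkmatrix{i}{:}}{\qrmatrix{j}{:}},
\end{equation*}
and the analogous calculation for the second term on the LHS yields
\begin{equation*}
\one\Tr\!\left[\overline{\mat{A}}\circ\matGsc{\qk}{\qr}\right]\!\vr = \sum_{i,j=1}^{N} a_{ij}\,\Gsc{\qkmatrix{i}{:}}{\qrmatrix{j}{:}}\Tr\vrmatrix{j}{:}.
\end{equation*}

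Next I would subtract the two expressions, factor out the common scalar $a_{ij}$, and apply the Tadmor shuffle condition~\eqref{eq:shuffle} pointwise in $(i,j)$ to replace $\left(\vkmatrix{i}{:}-\vrmatrix{j}{:}\right)\Tr\Gsc{\qkmatrix{i}{:}}{\qrmatrix{j}{:}}$ with $\bigl(\vkmatrix{i}{:}\Tr\gkmatrix{i}{:}-\hk(i)\bigr)-\bigl(\vrmatrix{j}{:}\Tr\grmatrix{j}{:}-\hr(j)\bigr)$. The double sum then separates: identifying $\sum_j a_{ij}=(\mat{A}\onesca)(i)$ and $\sum_i a_{ij}=(\onesca\Tr\mat{A})(j)$, and using the per-node convention $(\vk\Tr\gk)(i)=\vkmatrix{i}{:}\Tr\gkmatrix{i}{:}$ stipulated in the theorem, the expression collapses to $\left(\vk\Tr\gk-\hk\right)\Tr\mat{A}\onesca-\onesca\Tr\mat{A}\left(\vr\Tr\gr-\hr\right)$, which is the claimed RHS.

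The argument carries no essential analytical difficulty; the only obstacle is disciplined bookkeeping in the block/tensor notation — in particular, the interpretation of $\overline{\mat{A}}$ and the way $\onefive$ extracts the diagonal entries of each $5\times 5$ block of $\matGsc{\qk}{\qr}$. I note that symmetry of the two-point flux function, though listed as a hypothesis, is not actually invoked in the computation; the Tadmor shuffle condition alone drives the identity.
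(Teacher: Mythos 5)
Your proof is correct and is essentially the argument the paper delegates to the citation \cite{Fernandez2018prefine}: expand both bilinear forms as double sums over node pairs using the block-diagonal structure of $\matGsc{\qk}{\qr}$ and the fact that $\onefive$ extracts the diagonal of each block, apply the shuffle condition \eqref{eq:shuffle} pairwise, and resum the two separated single sums against the row and column sums of $\mat{A}$. Your closing observation that symmetry is never invoked is accurate for the identity as you have grouped it, since \eqref{eq:shuffle} is applied with the arguments already ordered as $\left(\qkmatrix{i}{:},\qrmatrix{j}{:}\right)$ in both terms; symmetry is nevertheless needed where the theorem is actually used (e.g.\ to rewrite $\matQsc{\qtwolmone}{\qk}$ as the transpose of $\matQsc{\qk}{\qtwolmone}$ in the interface coupling terms of Appendix~\ref{app:Eulerentropycon}), which is why it appears as a standing hypothesis.
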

\begin{proof}
The proof is given in~\cite{Fernandez2018prefine}.
\end{proof}

The discrete operators that are used to discretize the terms in~\eqref{eq:skewEuler} and their error properties are 
given in the next theorem.
\begin{theorem}\label{thrm:AlmkDxil}
\begin{equation}\label{eq:AlmkDxil}
\left(\matAlmk\Dxil\right)\circ\matFxmsc{\qk}{\qk}\one\approx\left.\frac{1}{2}\Alm\frac{\partial\Fxm}{\partial\xil}\right|_{\Ck},
\end{equation}
\begin{multline}\label{eq:DxilAlmk}
\left(\Dxil\matAlmk\right)\circ\matFxmsc{\qk}{\qk}\one\approx\\\left.\frac{1}{2}\frac{\partial}{\xil}\left(\Alm\Fxm\right)\right|_{\Ck}
+\left.\frac{1}{2}\Fxm\frac{\partial}{\partial\xil}\left(\Alm\right)\right|_{\Ck},
\end{multline}

\begin{equation}\label{eq:BlkDxil}
\left(\matBlk\Dxil\right)\circ\matQsc{\qk}{\qk}\one\approx\left.\frac{1}{2}\Bl\frac{\partial\Qk}{\partial\xil}\right|_{\Ck},
\end{equation}
\begin{equation}\label{eq:DxilBlk}
\left(\Dxil\matBlk\right)\circ\matQsc{\qk}{\qk}\one\approx\left.\frac{1}{2}\frac{\partial}{\xil}\left(\Bl\Qk\right)\right|_{\Ck}
+\left.\frac{1}{2}\Qk\frac{\partial}{\partial\xil}\left(\Bl\right)\right|_{\Ck},
\end{equation}
where $\matAlmk$ and $\matBlk$ are diagonal matrices with the metric terms along their diagonals and $\mat{F}_{\xm}^{sc}$ and $\mat{U}^{sc}$ 
are constructed from two-point flux functions that are symmetric, consistent, and satisfy the following shuffle conditions:
\begin{equation}\label{eq:Uflux}
\begin{split}
&\left(\wkmatrix{i}{:}-\wrmatrix{j}{:}\right)\Tr\fxmsc{\qkmatrix{i}{:}}{\qrmatrix{j}{:}}=\\
&\wkmatrix{i}{:}\Tr\bfnc{F}_{\xm}(\qkmatrix{i}{:})-\mathcal{F}_{x_m}\left(\qkmatrix{i}{:}\right)\\
&-\left[\wrmatrix{j}{:}\Tr\bfnc{F}_{\xm}(\qrmatrix{j}{:})-\mathcal{F}_{x_m}\left(\qrmatrix{j}{:}\right)\right],
\end{split}
\end{equation} 
\begin{equation}\label{eq:Fflux}
\begin{split}
&\left(\wkmatrix{i}{:}-\wrmatrix{j}{:}\right)\Tr\Qsc{\qkmatrix{i}{:}}{\qrmatrix{j}{:}}=\\
&\wkmatrix{i}{:}\Tr\qkmatrix{i}{:}-\mathcal{S}\left(\qkmatrix{i}{:}\right)\\
&-\left[\wrmatrix{j}{:}\Tr\qrmatrix{j}{:}-\mathcal{S}\left(\qrmatrix{j}{:}\right)\right].
\end{split}
\end{equation} 
\end{theorem}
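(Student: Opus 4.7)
The plan is to establish each of the four approximation identities by reducing them to the nonlinear SBP property of Theorem~\ref{thrm:nonIBP}, exploiting the fact that $\matAlmk$ and $\matBlk$ are diagonal matrices of nodal metric values. The two ``straight'' identities \eqref{eq:AlmkDxil} and \eqref{eq:BlkDxil} are immediate, while the two ``product rule'' identities \eqref{eq:DxilAlmk} and \eqref{eq:DxilBlk} require a commutator argument together with a Taylor expansion against a consistent symmetric two-point flux.

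First I would handle \eqref{eq:AlmkDxil}. Because $\matAlmk$ is diagonal, it passes through the Hadamard product when multiplying from the left:
\begin{equation*}
[(\matAlmk\Dxil)\circ\matFxmsc{\qk}{\qk}]_{ij}
=[\matAlmk]_{ii}\,[\Dxil]_{ij}\,[\matFxmsc{\qk}{\qk}]_{ij},
\end{equation*}
so that $(\matAlmk\Dxil)\circ\matFxmsc{\qk}{\qk}\one=\matAlmk\bigl[(\Dxil\circ\matFxmsc{\qk}{\qk})\one\bigr]$. Since $\matFxmsc{\qk}{\qk}$ is built from a symmetric, consistent two-point flux satisfying \eqref{eq:Uflux}, Theorem~\ref{thrm:nonIBP} yields $(\Dxil\circ\matFxmsc{\qk}{\qk})\one\approx\tfrac{1}{2}\partial\Fxm/\partial\xil$, and nodal multiplication by $\matAlmk$ produces \eqref{eq:AlmkDxil}. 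The identical argument with $\matBlk$ and $\matQsc{\qk}{\qk}$, whose two-point flux satisfies \eqref{eq:Fflux}, establishes \eqref{eq:BlkDxil}.

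Next, for \eqref{eq:DxilAlmk} I would split the operator as $\Dxil\matAlmk=\matAlmk\Dxil+[\Dxil,\matAlmk]$. The ``commutative'' summand contributes $\tfrac{1}{2}\Alm\,\partial\Fxm/\partial\xil$ by the result just proved. The commutator piece is
\begin{equation*}
\bigl\{\,[\Dxil,\matAlmk]\circ\matFxmsc{\qk}{\qk}\,\one\,\bigr\}_i
=\sum_j[\Dxil]_{ij}\bigl([\matAlmk]_{jj}-[\matAlmk]_{ii}\bigr)[\matFxmsc{\qk}{\qk}]_{ij}.
\end{equation*}
Using smoothness of the metric term $\Alm$, consistency of the symmetric two-point flux $[\matFxmsc{\qk}{\qk}]_{ij}=\bfnc{F}_{\xm}|_i+\mathcal{O}(\|\bm{\xi}^{(j)}-\bm{\xi}^{(i)}\|)$, together with the SBP identities $\sum_j[\Dxil]_{ij}=0$ and $\sum_j[\Dxil]_{ij}\xi_{l,j}=1$ that follow from exactness on monomials of degree $\le p\ge 1$, a Taylor expansion of $[\matAlmk]_{jj}-[\matAlmk]_{ii}$ about node $i$ identifies the leading behaviour of the commutator as $\Fxm\,\partial\Alm/\partial\xil$. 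Adding the two contributions gives
\begin{equation*}
\tfrac{1}{2}\Alm\tfrac{\partial\Fxm}{\partial\xil}+\Fxm\tfrac{\partial\Alm}{\partial\xil}
=\tfrac{1}{2}\tfrac{\partial}{\partial\xil}(\Alm\Fxm)+\tfrac{1}{2}\Fxm\tfrac{\partial\Alm}{\partial\xil},
\end{equation*}
which is \eqref{eq:DxilAlmk}. The identity \eqref{eq:DxilBlk} follows by repeating this commutator argument with $\matBlk$ and $\matQsc{\qk}{\qk}$ in place of $\matAlmk$ and $\matFxmsc{\qk}{\qk}$.

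The step I expect to be the main obstacle is controlling all of the remainder terms in the commutator expansion at the design order $p$ of the underlying SBP operator: the na\"ive expansion above only delivers a first-order residual, and to conclude accuracy compatible with the spectral collocation scheme one has to keep higher-order Taylor terms, exploit both shuffle relations \eqref{eq:Uflux}--\eqref{eq:Fflux} (so that symmetric two-point flux perturbations against antisymmetric-in-$(i,j)$ combinations $[\matAlmk]_{jj}-[\matAlmk]_{ii}$ cancel to the required order), and apply the full exactness of $\Dxil$ on polynomial test functions, exactly as carried out in the accuracy analysis of \cite{Fernandez2018prefine}. Once that bookkeeping is in place the four estimates follow.
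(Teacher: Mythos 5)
Your proposal is correct in outline, but it takes a different route from the paper only in the trivial sense that the paper gives no argument at all: its ``proof'' of \eqref{eq:AlmkDxil} and \eqref{eq:DxilAlmk} is a citation to \cite{Fernandez2018prefine}, with the remark that \eqref{eq:BlkDxil} and \eqref{eq:DxilBlk} follow identically. What you have written is a reasonable reconstruction of the cited argument. The key algebraic steps check out: for diagonal $\matAlmk$ one indeed has $\left(\matAlmk\Dxil\right)\circ\mat{C}=\matAlmk\left(\Dxil\circ\mat{C}\right)$ entrywise, so \eqref{eq:AlmkDxil} and \eqref{eq:BlkDxil} follow directly from the derivative approximation property in Theorem~\ref{thrm:nonIBP} applied with $\matFxmsc{\qk}{\qk}$ and $\matQsc{\qk}{\qk}$; the commutator identity $\bigl\{[\Dxil,\matAlmk]\circ\matFxmsc{\qk}{\qk}\one\bigr\}_i=\sum_j[\Dxil]_{ij}\bigl([\matAlmk]_{jj}-[\matAlmk]_{ii}\bigr)[\matFxmsc{\qk}{\qk}]_{ij}$ is correct; and the bookkeeping $\tfrac{1}{2}\Alm\,\partial_{\xil}\Fxm+\Fxm\,\partial_{\xil}\Alm=\tfrac{1}{2}\partial_{\xil}(\Alm\Fxm)+\tfrac{1}{2}\Fxm\,\partial_{\xil}\Alm$ matches the right-hand side of \eqref{eq:DxilAlmk}. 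You also correctly identify where the real work lies, namely pushing the leading-order commutator computation to design order. One refinement there: the higher-order cancellations do not come from the shuffle conditions \eqref{eq:Uflux}--\eqref{eq:Fflux}, which play no role in the accuracy statement (they are needed only for entropy conservation). They come from the symmetry and consistency of the two-point flux --- a smooth symmetric consistent flux equals the physical flux evaluated at the midpoint of its arguments up to even-order corrections in $\bigl(\qkmatrix{j}{:}-\qkmatrix{i}{:}\bigr)$ --- combined with the exactness of $\Dxil$ on monomials up to degree $p$, which annihilates the moments $\sum_j[\Dxil]_{ij}(\xi_{l,j}-\xi_{l,i})^{k}$ for $2\le k\le p$. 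With that correction your sketch is a faithful, and more informative, substitute for the paper's citation-only proof.
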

\begin{proof}
The proofs of~\eqref{eq:AlmkDxil} and~\eqref{eq:DxilAlmk} are given in~\cite{Fernandez2018prefine}, while the 
proofs of~\eqref{eq:BlkDxil} and~\eqref{eq:DxilBlk}, follow identically.
\end{proof}
Using the operators in Theorem~\ref{thrm:AlmkDxil}, the semi-discrete form of~\eqref{eq:skewEuler} is given as
\begin{multline}\label{eq:skewEulersemi}
\frac{\mr{d}}{\mr{d}\tau}\matJk\qk+\sum\limits_{l=1}^{3}\left(\Dxil\matBlk+\matBlk\Dxil\right)\circ\matQsc{\qk}{\qk}\one\\
+\sum\limits_{l,m=1}^{3}\left(\Dxil\matAlmk+\matAlmk\Dxil\right)\circ\matFxmsc{\qk}{\qk}\one=\bm{SAT}_{\tau}+\bm{SAT}_{\xil},
\end{multline}
where the SATs on the right-hand side are defined as
\begin{multline}\label{eq:SATcentraltau}
\bm{SAT}_{\tau}\equiv\M^{-1}\sum\limits_{l=1}^{3}\left[\left(\Exil\matBlk\right)\circ\matQsc{\qk}{\qk}\one\right.\\
+\left(\matBlk\Rxilalpha\Tr\Pxilortho\Rxilbeta\right)\circ\matQsc{\qk}{\qtwolmone}\one\\
\left.-\left(\matBlk\Rxilbeta\Tr\Pxilortho\Rxilalpha\right)\circ\matQsc{\qk}{\qtwol}\one\right],
\end{multline}
\begin{multline}\label{eq:SATcentralxil}
\bm{SAT}_{\xil}\equiv\M^{-1}\sum\limits_{l,m=1}^{3}\left[\left(\Exil\matAlmk\right)\circ\matFxmsc{\qk}{\qk}\one\right.\\
+\left(\matAlmk\Rxilalpha\Tr\Pxilortho\Rxilbeta\right)\circ\matQsc{\qk}{\qtwolmone}\one\\
\left.-\left(\matAlmk\Rxilbeta\Tr\Pxilortho\Rxilalpha\right)\circ\matFxmsc{\qk}{\qtwol}\one\right],
\end{multline}
where $\qtwolmone$ and $\qtwol$ are the solution vectors for the elements touching the $(2l-1)$ and $2l$ faces, 
respectively.

For analysis, it is convenient to separate out the discretization of each scalar conservation law 
in~\eqref{eq:skewEuler} and for this purpose the following is introduced: 
\begin{multline}\label{eq:skewEulersemisca}
\frac{\mr{d}}{\mr{d}\tau}\matJksca\qkmatrix{:}{i}+\sum\limits_{l=1}^{3}\left(\Dxilsca\matBlksca+\matBlksca\Dxilsca\right)\circ
\matQscsca{i}{\qk}{\qk}\onesca\\
+\sum\limits_{l,m=1}^{3}\left(\Dxilsca\matAlmksca+\matAlmksca\Dxilsca\right)\circ\matFxmscsca{i}{\qk}{\qk}\onesca=\bm{SAT}_{\tau}^{(i)}+\bm{SAT}_{\xil}^{(i)},
\end{multline}
where the SATs on the right-hand side are defined as
\begin{multline}\label{eq:SATcentraltausca}
\bm{SAT}_{\tau}^{(i)}\equiv\Msca^{-1}\sum\limits_{l=1}^{3}\left[\left(\Exilsca\matBlksca\right)\circ\matQscsca{i}{\qk}{\qk}\onesca\right.\\
+\left(\matBlksca\Rxilalphasca\Tr\Pxilorthosca\Rxilbetasca\right)\circ\matQscsca{i}{\qk}{\qtwolmone}\onesca\\
\left.-\left(\matBlksca\Rxilbetasca\Tr\Pxilorthosca\Rxilalphasca\right)\circ\matQscsca{i}{\qk}{\qtwol}\onesca\right],
\end{multline}
\begin{multline}\label{eq:SATcentralxilsca}
\bm{SAT}_{\xil}^{(i)}\equiv\Msca^{-1}\sum\limits_{l,m=1}^{3}\left[\left(\Exilsca\matAlmksca\right)\circ\matFxmscsca{i}{\qk}{\qk}\onesca\right.\\
+\left(\matAlmksca\Rxilalphasca\Tr\Pxilorthosca\Rxilbetasca\right)\circ\matQscsca{i}{\qk}{\qtwolmone}\onesca\\
\left.-\left(\matAlmksca\Rxilbetasca\Tr\Pxilorthosca\Rxilalphasca\right)\circ\matFxmscsca{i}{\qk}{\qtwol}\onesca\right],
\end{multline}
where
\begin{equation*}
\begin{split}
&\matQscsca{i}{\qk}{\qr} \equiv\\
&\left[
\begin{array}{ccc}
\Qsc{\qkmatrix{1}{:}}{\qrmatrix{1}{:}}(i)&\dots&\Qsc{\qkmatrix{1}{:}}{\qrmatrix{N_{r}}{:}}(i)\\
\vdots&&\vdots\\
\Qsc{\qkmatrix{N_{\kappa}}{:}}{\qrmatrix{1}{:}}(i)&\dots&\Qsc{\qkmatrix{N_{\kappa}}{:}}{\qrmatrix{N_{r}}{:}}(i)
\end{array}
\right],\\
&i=1,\dots,5.
\end{split}
\end{equation*}

Similarly,
\begin{equation*}
\begin{split}
&\matFxmscsca{i}{\qk}{\qr} \equiv\\
&\left[
\begin{array}{ccc}
\fxmsc{\qkmatrix{1}{:}}{\qrmatrix{1}{:}}(i)&\dots&\fxmsc{\qkmatrix{1}{:}}{\qrmatrix{N_{r}}{:}}(i)\\
\vdots&&\vdots\\
\fxmsc{\qkmatrix{N_{\kappa}}{:}}{\qrmatrix{1}{:}}(i)&\dots&\fxmsc{\qkmatrix{N_{\kappa}}{:}}{\qrmatrix{N_{r}}{:}}(i)
\end{array}
\right],\\
&i=1,\dots,5.
\end{split}
\end{equation*}
In order to be entropy conservative, the approximation to the metric terms used in~\eqref{eq:skewEulersemi} need 
to satisfy the conditions contained in the next theorem.
\begin{theorem}\label{thrm:EulerScon}
The discretization~\eqref{eq:skewEulersemi} has a telescoping entropy on interior elements if 
the metric terms satisfy
\begin{equation}\label{eq:GCLtaudiss}
\frac{\mr{d}\vecJk}{\mr{d}\tau}+\sum\limits_{l=1}^{3}\Dxilsca\matBlksca\onesca = \bm{0},
\end{equation}
and
\begin{equation}\label{eq:GCLxildiss}
\sum\limits_{l=1}^{3}\Dxilsca\matAlmksca\onesca = \bm{0},\qquad m = 1,2,3.
\end{equation}
This implies that for the periodic problem, the scheme is entropy conservative, i.e., 
\begin{equation*}
\begin{split}
&\sum\limits_{\kappa=1}^{K}\onesca\Tr\Msca\frac{\mr{d}\matJksca\mathcal{S}_k}{\mr{d}\tau} = 0,
\end{split}
\end{equation*}
and entropy conservative/stable for non-periodic problems under the assumption of appropriate 
SATs are used for the weak imposition of boundary conditions.
\end{theorem}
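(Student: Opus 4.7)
The plan is to mimic, step for step, the continuous entropy analysis of \cref{sec:continuous_entropy} at the discrete level. I would begin by left-contracting the semi-discrete form~\eqref{eq:skewEulersemi} with $\wk\Tr\M$, which is the discrete counterpart of integrating against the entropy variables over the reference element. Equivalently, summing the scalar form~\eqref{eq:skewEulersemisca} against $\wk(:,i)\Tr\Msca$ for $i=1,\dots,5$ produces the discrete analogues of the four terms labelled I--IV in~\eqref{eq:Eeq}; the GCL hypotheses~\eqref{eq:GCLtaudiss}--\eqref{eq:GCLxildiss} will play the role of the continuous identities~\eqref{eq:GCL_C}.

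For the time--derivative contribution, using $\wk\Tr = \partial\mathcal{S}/\partial\qk$ componentwise together with the product rule gives
\begin{equation*}
\wk\Tr\M\tfrac{\mr{d}}{\mr{d}\tau}\matJk\qk
= \onesca\Tr\Msca\tfrac{\mr{d}}{\mr{d}\tau}\!\left(\matJksca\sk\right)
+ \onesca\Tr\Msca\!\left(\wk\Tr\qk-\sk\right)\tfrac{\mr{d}\matJksca}{\mr{d}\tau},
\end{equation*}
exactly mirroring~\eqref{eq:EeqI}. For each ALE volume term I would use $\M\Dxil=\Qxil$ and the fact that the diagonal matrix $\matBlk$ commutes with $\M$ to rewrite $\M(\Dxil\matBlk+\matBlk\Dxil)=\Qxil\matBlk+\matBlk\Qxil$, and then apply \cref{thrm:telescope} componentwise with $\mat{A}$ equal to $\Qxilsca\matBlksca$ (or its transpose), $\vk=\vr=\wk$, $\gk=\qk$, and $\hk=\sk$; the required shuffle hypothesis is precisely the $\mat{U}^{sc}$ condition~\eqref{eq:Fflux}. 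This splits each term into a pure surface piece carried by $\Exil=\Qxil+\Qxil\Tr$ plus an interior remainder of the form $\onesca\Tr\!\left(\wk\Tr\qk-\sk\right)\Dxilsca\matBlksca\onesca$. An analogous manipulation of the inviscid flux terms, using instead the shuffle condition~\eqref{eq:Uflux} with $\gk=\bfnc{F}_{x_m}$ and $\hk=\mathcal{F}_{x_m}$, produces surface pieces together with remainders proportional to $\Dxilsca\matAlmksca\onesca$.

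Summing all remainders and invoking the discrete GCL~\eqref{eq:GCLtaudiss}--\eqref{eq:GCLxildiss} kills every volume contribution except the entropy time derivative, mirroring the cancellations visible in~\eqref{eq:Eineq}. The surviving surface pieces from $\Exil$ must then be combined with the two-point-flux SATs~\eqref{eq:SATcentraltau}--\eqref{eq:SATcentralxil}. Using the flux symmetry~\eqref{eq:symmf} and the decomposition $\Exil=\Rxilbeta\Tr\Pxilortho\Rxilbeta-\Rxilalpha\Tr\Pxilortho\Rxilalpha$, I would verify by direct substitution that on every interior face the contribution from element $\kappa$ at face $2l$ cancels that from its neighbor at face $2l-1$; this is the telescoping property. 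For a periodic mesh every face is interior, so summing over $\kappa$ yields $\sum_{\kappa=1}^{K}\onesca\Tr\Msca\,\mr{d}(\matJksca\sk)/\mr{d}\tau=0$, i.e.\ discrete entropy conservation. For non-periodic problems, entropy stability is then inherited from entropy-stable SAT penalties imposed weakly at the physical boundary.

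The main obstacle I anticipate is not any single conceptual step but the bookkeeping of the interface cancellation: the diagonal metric matrices $\matBlk$ and $\matAlmk$ thread through every surface quantity, so showing that the $\Exil$ boundary pieces from element $\kappa$ exactly pair with the prescribed SATs of the adjacent element requires algebra that parallels, but is somewhat heavier than, the static-grid analysis of~\cite{CFNPSY2016}. Indeed, the precise form of the SATs~\eqref{eq:SATcentraltau}--\eqref{eq:SATcentralxil} is dictated by the requirement that this pairwise cancellation go through on moving and deforming grids.
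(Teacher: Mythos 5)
Your proposal follows essentially the same route as the paper's proof in \cref{app:Eulerentropycon}: contract with $\wk\Tr\M$, use the SBP splitting $\Qxil+\Qxil\Tr=\Exil$ together with \cref{thrm:telescope} and the shuffle conditions to reduce each volume term to a surface piece plus a residual multiplying the entropy potential (or potential flux) against the discrete GCL, and then pair the surviving $\Exil$ surface pieces with the SATs~\eqref{eq:SATcentraltau}--\eqref{eq:SATcentralxil} to obtain pointwise cancellation at interior faces. The argument is correct, including your identification of the interface bookkeeping with the diagonal metric matrices as the only delicate step.
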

\begin{proof}
The proof is contained in \cref{app:Eulerentropycon}
\end{proof}

Next, let us show that the proposed scheme is freestream preserving.
\begin{theorem}\label{thrm:EulerFS}
Under the same conditions on the metric terms as in Theorem~\ref{thrm:EulerScon} the discretization~\eqref{eq:skewEulersemi} 
is freestream preserving.
\end{theorem}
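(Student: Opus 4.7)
The plan is to substitute a uniform constant state $\qk = \bm{c}$, together with matching constant neighbor states $\qtwolmone = \qtwol = \bm{c}$, into the scalar semi-discrete equation~\eqref{eq:skewEulersemisca} and verify that every volume and surface term cancels, so that $\qk \equiv \bm{c}$ evolves trivially.

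The first step is to linearize each Hadamard product under this substitution. By the consistency of the entropy-conservative two-point fluxes required in Theorem~\ref{thrm:AlmkDxil}, the matrices $\matQscsca{i}{\bm{c}}{\bm{c}}$ and $\matFxmscsca{i}{\bm{c}}{\bm{c}}$ reduce to rank-one matrices with constant entries $c_i$ and $\fnc{F}_{x_m,i}(\bm{c})$ respectively. Using the identity $\mat{A}\circ (a\mathbf{1}) = a\mat{A}$ together with the fact that $\mat{A}\onesca$ reads off the row sums of $\mat{A}$, the volume contribution collapses to $c_i\sum_l(\Dxilsca\matBlksca+\matBlksca\Dxilsca)\onesca + \sum_{l,m}\fnc{F}_{x_m,i}(\bm{c})(\Dxilsca\matAlmksca+\matAlmksca\Dxilsca)\onesca$.

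Next, the SBP accuracy property $\Dxilsca\onesca=\bm{0}$ (Definition~\ref{SBP}(1) with $k=0$) immediately discards the nonconservative halves $\matBlksca\Dxilsca\onesca$ and $\matAlmksca\Dxilsca\onesca$. What remains are the conservative combinations $\sum_l\Dxilsca\matBlksca\onesca$ and $\sum_l\Dxilsca\matAlmksca\onesca$, which by the hypothesized discrete GCL identities~\eqref{eq:GCLtaudiss} and~\eqref{eq:GCLxildiss} evaluate to $-\mr{d}\vecJk/\mr{d}\tau$ and $\bm{0}$ respectively. The first exactly cancels the explicit time derivative $\mr{d}(\matJksca c_i\onesca)/\mr{d}\tau = c_i\,\mr{d}\vecJk/\mr{d}\tau$, so the interior contribution vanishes.

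The final and hardest step is to show that the interface SATs~\eqref{eq:SATcentraltausca}--\eqref{eq:SATcentralxilsca} also vanish at matching constant states. Once consistency linearizes the Hadamard products in the SATs as above, one combines the three SAT pieces using the SBP boundary decomposition $\Exilsca = \Rxilbetasca\Tr\Pxilorthosca\Rxilbetasca - \Rxilalphasca\Tr\Pxilorthosca\Rxilalphasca$ and the extrapolation consistency $\Rxilalphasca\onesca = \Rxilbetasca\onesca = \bm{1}_{\mr{face}}$, so that each SAT reduces to a face-quadrature difference of the metric $\matBlksca$ (respectively $\matAlmksca$) evaluated at $\kappa$'s face versus the neighbor's adjacent face. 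Under the watertight-grid hypothesis~\ref{assume:curv} these face-metric traces agree across any shared interface, and the interface contributions cancel. The main obstacle lies precisely in this two-sided metric cancellation: one must ensure that the metric terms at a shared face are constructed from a single, interface-consistent representation rather than from each element independently, otherwise the cancellation degrades to an $O(h^p)$ truncation error and exact freestream preservation is lost.
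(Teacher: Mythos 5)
Your proof follows the same route as the paper's: substitute the constant state, use the consistency of the two-point fluxes together with $\Dxilsca\onesca=\bm{0}$ to collapse the Hadamard products to the conservative halves, invoke the discrete GCL conditions \eqref{eq:GCLtaudiss}--\eqref{eq:GCLxildiss} to cancel the surviving volume terms against $\mr{d}\vecJk/\mr{d}\tau$, and show the SATs vanish. One small correction to your last paragraph: the SATs \eqref{eq:SATcentraltausca}--\eqref{eq:SATcentralxilsca} are built entirely from element $\kappa$'s own metric matrices (the neighbor enters only through the flux arguments), so at matching constant states their cancellation is purely local via $\Exilsca=\Rxilbetasca\Tr\Pxilorthosca\Rxilbetasca-\Rxilalphasca\Tr\Pxilorthosca\Rxilalphasca$ and the extrapolation consistency $\Rxilalphasca\onesca=\Rxilbetasca\onesca$; no two-sided interface metric matching is needed for freestream preservation (that matching matters instead for the interface cancellations in the entropy-conservation proof).
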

\begin{proof}
The proof follows similarly to that given in Ref.~\cite{Fernandez2018prefine}.
Here, we sketch the required steps. It is convenient to use the scalar version of the semi-discrete form~\eqref{eq:skewEulersemisca}.
Consider a physically meaningful constant state, $\U_{c}$, then we denote the evaluation of this constant state on the mesh nodes of the 
$\kappa^{\rm{th}}$ element as $\qk^{(c)}$; then, via the consistency of the two-point flux functions, the derivative operators 
in the scalar version of semi-discrete form reduce to
\[
\left(\Dxilsca\matBlksca+\matBlksca\Dxilsca\right)\circ
\matQscsca{i}{\qk^{(c)}}{\qk^{(c)}}\onesca = \U_{c}(i)\Dxilsca\matBlksca\onesca
\]
and
\begin{multline*}
\left(\Dxilsca\matAlmksca+\matAlmksca\Dxilsca\right)\circ\matFxmscsca{i}{\qk^{(c)}}{\qk^{(c)}}\onesca=\\
\Fxm\left(\U_{c}\right)(i)\Dxilsca\matAlmksca\onesca.
\end{multline*}
In a similar manner, it is easy to show that the SATs vanish and the resulting conditions on the 
metric terms are the same as in Theorem~\ref{thrm:EulerScon}.
\end{proof}
\begin{remark}
There is a misconception that the freestream preservation is sufficient to guarantee stability on moving and deforming grids.
In fact, the freestream preservation property is only a necessary condition for stability.
\end{remark}
In the present analysis, the interest is in schemes that are discretely conservative so that when they converge, they converge to a weak solution of 
the conservation law equations. 
A proof that the scheme has the desired property is based on a semi-discrete analog
of the Lax-Wendroff theorem. The conservation property can be demonstrated by showing 
that the scheme has a telescoping flux form. 
Because the nonlinear SBP operators are used in the construction of the semi-discrete form~\eqref{eq:skewEulersemi}, a 
proof of a ``stronger'' discrete conservation statement, namely subcell conservation,  is  given herein. 
\begin{theorem}\label{thrm:subcell}
The semi-discrete scheme~\eqref{eq:skewEulersemi} is subcell conservative.
\end{theorem}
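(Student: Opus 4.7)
The plan is to exhibit, for each scalar conservation equation, two-point ``subcell fluxes'' defined on every edge between adjacent collocation nodes---both in the interior of an element and across element interfaces---such that the semi-discrete residual at each node equals the difference of the adjacent subcell fluxes. This is a strengthening of the usual element-wise Lax--Wendroff conservation identity to the subcell level. Working from the scalar semi-discrete form \eqref{eq:skewEulersemisca}, I would first multiply through by the diagonal norm $\Msca$ so that $\Dxilsca\matAlmksca+\matAlmksca\Dxilsca$ becomes $\Qxilsca\matAlmksca+\matAlmksca\Qxilsca$ (and similarly for the $\matBlksca$ terms), and then decompose $\Qxilsca=\Sxilsca+\frac{1}{2}\Exilsca$ with $\Sxilsca$ skew-symmetric and $\Exilsca$ symmetric.

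The heart of the argument is the classical subcell telescoping lemma (see the proof of Theorem~\ref{thrm:telescope} and the related discussion in~\cite{Fernandez2018prefine}): for a skew-symmetric matrix $\Sxilsca$ and a symmetric, consistent two-point flux matrix $\mat{F}$ with entries $\bfnc{F}^{sc}(\qkmatrix{i}{:},\qkmatrix{j}{:})$, the $i$-th component of $2(\Sxilsca\circ\mat{F})\onesca$ can be rewritten as a difference $\bar{\bm{f}}_{i+\frac{1}{2}}-\bar{\bm{f}}_{i-\frac{1}{2}}$ of subcell fluxes constructed from partial sums of the entries of $\Sxilsca\circ\mat{F}$ on either side of row $i$. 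Because $\matAlmksca$ and $\matBlksca$ are diagonal and the flux matrices $\matFxmscsca{i}{\qk}{\qk}$ and $\matQscsca{i}{\qk}{\qk}$ are symmetric in the node indices by \eqref{eq:symmf}, the combination $\Sxilsca\matAlmksca+\matAlmksca\Sxilsca$ remains skew-symmetric once the diagonal metric factors are absorbed into $\mat{F}$, so the telescoping lemma applies verbatim to each scalar equation. Extending the one-dimensional subcell identity to three dimensions uses the tensor-product structure in \eqref{recastderiv}--\eqref{recastE}; the temporal derivative $\frac{d}{d\tau}(\matJksca\qkmatrix{:}{i})$ is merged with the $\matBlksca$ contributions through the temporal GCL \eqref{eq:GCLtaudiss}, and the spatial GCL conditions \eqref{eq:GCLxildiss} ensure that no spurious source is produced in the interior.

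The boundary contributions $\frac{1}{2}\Exilsca\matAlmksca$ and $\frac{1}{2}\Exilsca\matBlksca$ combine with the first, central SAT terms in \eqref{eq:SATcentralxilsca} and \eqref{eq:SATcentraltausca} to form the interior part of an entropy-consistent numerical flux at each shared element interface. The remaining SAT terms, built from the extrapolation operators $\Rxilalphasca$ and $\Rxilbetasca$ and from two-point fluxes between the current element's interface state and the neighbour's, supply the opposite-side subcell flux and close the telescoping identity across the interface. Summing the node-wise identity over any connected collection of subcells leaves only contributions on its outer boundary, which is precisely the subcell conservation statement; specialising to a full element gives element-wise conservation, and summation over all elements recovers the global Lax--Wendroff property up to physical boundary data. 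The principal obstacle is the inter-element matching step: one has to show that the numerical flux assembled from the $\kappa$-side---i.e.\ the piece of $\frac{1}{2}\Exilsca\matAlmksca$ together with the $\Rxilalphasca,\Rxilbetasca$-based SATs---coincides with the one produced by the neighbouring element, so that the interface flux is single-valued. This reduces to the symmetry \eqref{eq:symmf} of the two-point fluxes, the diagonality of the metrics, the watertight mesh condition (Assumption~\ref{assume:curv}), and the spatial GCL \eqref{eq:GCLxildiss}; a secondary technical point is packaging the temporal ($\matBlksca$) and spatial ($\matAlmksca$) metric contributions into one coherent subcell flux, which follows by linearity once the GCL identities are invoked.
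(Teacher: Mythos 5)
Your proposal is correct and follows essentially the same route as the paper's own (sketched) proof: both recast the Hadamard-product SBP operators in telescoping subcell-flux form along each mesh line (the partial-sum construction you describe is exactly the $\mat{\Delta}$/interdigitating-flux-point identity of Fisher \emph{et al.} that the paper cites), absorb the diagonal metric factors into the symmetric two-point fluxes, and use the SAT structure together with the matching of metrics at watertight interfaces to obtain a single-valued interface flux, concluding via the Lax--Wendroff argument. Your treatment of the inter-element matching and the GCL terms is somewhat more explicit than the paper's sketch, but it is not a different method.
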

\begin{proof}
The proof is sketched here. The nonlinear SBP operators can be recast in telescoping flux form. For example, 
for the one-dimensional nonlinear operator on $N$ nodes
\begin{equation*}
2\left(\DxiloneD\otimes\mat{I}_{3}\right)\circ\matFxmsc{\qk}{\qk}\one = 2\mat{P}^{-1}\otimes\mat{I}_{3}\mat{\Delta}\otimes\mat{I}_{3}\bfnc{f}_{\xm},
\end{equation*}
where the rectangular matrix $\mat{\Delta}$ of size $(N+1)\times N$ is given as
\begin{equation*}
\mat{\Delta} \equiv
\left[
\begin{array}{ccccc}
-1&1\\
&-1&1\\
&&\ddots&\ddots\\
&&&-1&1
\end{array}
\right]. 
\end{equation*}
The vector $\bfnc{f}_{\xm}$ is an interpolation of $\fnc{F}_{\xm}$ to a set of nodes that interdigitate the nodal distribution of the 
element. The nonlinear SBP operators constructed globally over the mesh are then recast in a telescoping flux form over each mesh line. 
The resulting finite-volume like scheme is in the form required by 
the Lax-Wendroff theorem and therefore, the discretization is subcell conservative (for more details, see Refs.~\cite{Fisher2013,Fisher2012phd}).
\end{proof}

\subsection{Entropy stable semi-discrete form for the Euler equations: interface dissipation}\label{sec:Eulerstab}
The weak imposition of boundary conditions using SATs can be reinterpreted in the sense of discontinuous Galerkin methods 
by discretely integrating against a test function $\fnc{V}$; thus, 
\begin{equation*}
\begin{split}
&\vk\Tr\M\bm{SAT}_{\tau}\approx\oint_{\Ghatk}\sum\limits_{l=1}^{3}\fnc{V}\left(\bfnc{U}-\bfnc{U}^{*}\right)\Blk\nxil\mr{d}\Ghat,\\
&\vk\Tr\M\bm{SAT}_{\xil}\approx\oint_{\Ghatk}\sum\limits_{l,m=1}^{3}\fnc{V}\left(\bfnc{F}_{xm}-\bfnc{F}_{\xm}^{*}\right)\Almk\nxil\mr{d}\Ghat.
\end{split}
\end{equation*}
In the context of the entropy conserving SATs thus far introduced, for example on the $2l$ interface, 
the weak form of the SAT can be reinterpreted as
\begin{equation*}
\begin{split}
&\vk\Tr\bm{SAT}_{\tau}^{(2l)}=\sum\limits_{l=1}^{3}\overbrace{\vk\Tr\left(\Rxilbeta\Tr\Pxilortho\Rxilbeta\matBlk\right)\circ\matQsc{\qk}{\qk}\one}^{\approx\displaystyle\oint_{\Ghatk^{2l}}\sum\limits_{l=1}^{3}\fnc{V}\bfnc{U}\Blk\nxil\mr{d}\Ghat}\\
&-\overbrace{\vk\Tr\left(\matBlk\Rxilbeta\Tr\Pxilortho\Rxilalpha\right)\circ\matQsc{\qk}{\qtwol}\one}^{\approx\displaystyle\oint_{\Ghatk^{2l}}\sum\limits_{l=1}^{3}\fnc{V}\bfnc{U}^{*}\Blk\nxil\mr{d}\Ghat},
\end{split}
\end{equation*}
and
\begin{equation*}
\begin{split}
&\vk\Tr\bm{SAT}_{\xil}^{(2l)}=\sum\limits_{l,m=1}^{3}\overbrace{\vk\Tr\left(\Rxilbeta\Tr\Pxilortho\Rxilbeta\matAlmk\right)\circ\matFxmsc{\qk}{\qk}\one}^{\approx\displaystyle\oint_{\Ghatk^{2l}}\sum\limits_{l=1}^{3}\fnc{V}\bfnc{F}_{\xm}\Almk\nxil\mr{d}\Ghat}\\
&-\overbrace{\vk\Tr\left(\matAlmk\Rxilbeta\Tr\Pxilortho\Rxilalpha\right)\circ\matFxmsc{\qk}{\qtwol}\one}^{\approx\displaystyle\oint_{\Ghatk^{2l}}\sum\limits_{l=1}^{3}\fnc{V}\bfnc{F}_{\xm}^{*}\Blk\nxil\mr{d}\Ghat}.
\end{split}
\end{equation*}

In this section, the entropy conservative SATs are augmented with dissipative terms motivated by the upwinding used in the Roe approximate Riemann solver, which has generic form
\begin{equation*}
\bfnc{F}^{*}=\frac{\bfnc{F}^{+}+\bfnc{F}^{-}}{2}-\frac{1}{2}\mat{Y}|\mat{\Lambda}|\mat{Y}^{-1}\left(\bfnc{U}^{+}-\bfnc{U}^{+}\right),
\end{equation*}
where the superscript $+$ denotes quantities evaluated on a side of an interface in the positive direction of the unit normal, and the superscript $-$ denotes 
quantities in the negative direction. Since the Roe flux is not entropy consistent, the central flux,
\begin{equation*}
\frac{\bfnc{F}^{+}+\bfnc{F}^{-}}{2}
\end{equation*}
is replaced by the numerical fluxes given in the above entropy conservative SAT terms. To provide entropy dissipation at element interfaces, 
the dissipative term, i.e.,
\begin{equation*}
\mat{Y}|\mat{\Lambda}|\mat{Y}^{-1}\left(\bfnc{U}^{+}-\bfnc{U}^{+}\right).
\end{equation*}
should also be modified.

The approach is to construct a dissipation term that enables entropy stability, while remaining design order accurate and conservative. 
This is accomplished by using the flux Jacobian with respect to the entropy variables rather than the conservative variables. The 
eigenvectors of the conservative variable flux Jacobian can be scaled such that 
\begin{equation*}
\frac{\partial \bfnc{U}}{\partial\bfnc{W}}=\mat{Y}\mat{Y}\Tr.
\end{equation*}
Thus, for the mesh velocity terms such as $\Blk\frac{\partial\bfnc{U}}{\partial \xil}$, the added dissipative term is of the form
\begin{equation*}
\left|\Blk\right|\mat{Y}\mat{Y}\Tr\left(\bfnc{W}^{+}-\bfnc{W}^{-}\right).
\end{equation*}
For the remaining spatial terms, note that
\begin{equation*}
\frac{\partial}{\partial\bfnc{W}}\left(\sum\limits_{m=1}^{3}\Almk\bfnc{F}_{\xm}\right)=\frac{\partial\bfnc{F}_{\xil}}{\partial\bfnc{W}}=
\frac{\partial\bfnc{F}_{\xil}}{\partial\bfnc{U}}\frac{\partial\bfnc{U}}{\partial\bfnc{W}}=\mat{Y}|\mat{\Lambda}_{\xil}|\mat{Y}\Tr,
\end{equation*}
this property was first derived by Merriam~\cite{Merriam1989} (for more details, see \cite{Fisher2012phd}).
Thus, the added dissipative terms are
\begin{equation}\label{eq:diss}
\begin{split}
\bm{diss}_{\tau}\equiv&-\M^{-1}\Rxilalpha\Tr\Pxilortho\left|\Blk\frac{\partial\bfnc{U}}{\partial\bfnc{W}}\right|_{(2l-1)}\left(\Rxilalpha\wk-\Rxilbeta\wtwolmone\right)\\
&-\M^{-1}\Rxilbeta\Tr\Pxilortho\left|\Blk\frac{\partial\bfnc{U}}{\partial\bfnc{W}}\right|_{2l}\left(\Rxilbeta\wk-\Rxilalpha\wtwol\right),\\
\bm{diss}_{\xil}\equiv&-\M^{-1}\Rxilalpha\Tr\Pxilortho\left|\frac{\partial\bfnc{F}_{\xil}}{\partial\bfnc{W}}\right|_{(2l-1)}\left(\Rxilalpha\wk-\Rxilbeta\wtwolmone\right)\\
&-\M^{-1}\Rxilbeta\Tr\Pxilortho\left|\frac{\partial\bfnc{F}_{\xil}}{\partial\bfnc{W}}\right|_{2l}\left(\Rxilbeta\wk-\Rxilalpha\wtwol\right),
\end{split}
\end{equation}
where
\begin{equation*}
\begin{split}
&\left|\Blk\frac{\partial\bfnc{U}}{\partial\bfnc{W}}\right|\equiv\left|\Blk\right|\mat{Y}\mat{Y}\Tr,\\
&\left|\frac{\partial\bfnc{F}_{\xil}}{\partial\bfnc{W}}\right|\equiv\mat{Y}|\mat{\Lambda}_{\xil}|\mat{Y}\Tr
\end{split} 
\end{equation*}
and the matrices $\mat{Y}$ and $\mat{\Lambda}_{\xil}$ are constructed pointwise from the Roe-average of the two states on either side of the interface.
The next theorem characterizes the addition of the dissipation to the scheme.
\begin{theorem}\label{thrm:diss}
The added dissipative terms~\eqref{eq:diss} are design order, lead to an entropy stable scheme, and retain subcell conservation.
\end{theorem}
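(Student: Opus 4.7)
The theorem asserts three properties—design order, entropy stability, and subcell conservation—and my plan is to reduce each to an analysis of the dissipative terms~\eqref{eq:diss} alone, using Theorems~\ref{thrm:EulerScon} and~\ref{thrm:subcell} for the baseline scheme. Design order follows because every term of~\eqref{eq:diss} is proportional to an interface jump of entropy variables, $\Rxilalpha\wk-\Rxilbeta\wtwolmone$ or its $2l$ analogue; since $\bfnc{W}$ is continuous at interfaces for smooth solutions and the extrapolation operators $\txilalpha,\txilbeta$ are $(r+1)$th-order accurate, these jumps are $O(h^{r+1})$, and the uniformly bounded Roe-averaged weight matrices then produce a dissipation that does not degrade the formal order. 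Subcell conservation follows from the observation that~\eqref{eq:diss} has exactly the interface SAT form already treated by Theorem~\ref{thrm:subcell}: the Roe-averaged dissipation matrices depend symmetrically on the two adjacent states, so the dissipation can be absorbed into a single-valued numerical interface flux, preserving the telescoping flux form required by the Lax-Wendroff argument.

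Entropy stability is the main substance. Contract the dissipation-augmented semi-discrete form~\eqref{eq:skewEulersemi} with $\wk\Tr\M$ and sum over all elements; by Theorem~\ref{thrm:EulerScon}, the baseline (entropy-conservative) portion telescopes to zero in the periodic case, or to an entropy-stable surface residual under suitable boundary SATs, leaving only the dissipative contribution to $\mr{d}\mathcal{S}/\mr{d}\tau$. Focus on a single interface shared by $\kappa$ and its $(2l-1)$ neighbor; writing $a\equiv\Rxilalpha\wk$, $b\equiv\Rxilbeta\wtwolmone$, and $\mat{M}$ for either of the Roe-averaged dissipation matrices $|\Blk\,\partial\bfnc{U}/\partial\bfnc{W}|$ or $|\partial\bfnc{F}_{\xil}/\partial\bfnc{W}|$, the dissipation on $\kappa$ contributes $-a\Tr\Pxilortho\mat{M}(a-b)$ while the mirror contribution from the neighbor (viewed through its $2l$ face) is $-b\Tr\Pxilortho\mat{M}(b-a)$, the matrix $\mat{M}$ being the same from the two sides because the Roe average is symmetric in its arguments. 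Adding these and using that $\Pxilortho\mat{M}$ is symmetric (the one-dimensional norms $\HxiloneD$ are diagonal for spectral collocation and $\mat{M}$ is block diagonal in the nodal index) collapses the two terms to $-(a-b)\Tr\Pxilortho\mat{M}(a-b)$, which is non-positive because $\mat{M}$ is symmetric positive semi-definite (noting $|\Blk|\ge 0$, $\mat{Y}\mat{Y}\Tr$ is symmetric positive definite, and $|\mat{\Lambda}_{\xil}|$ is a non-negative diagonal matrix) and $\Pxilortho$ has positive weights. Summing over all interfaces yields $\mr{d}\mathcal{S}/\mr{d}\tau\le 0$, which is entropy stability.

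The main obstacle is the algebraic collapse of the two one-sided SAT contributions into a single jump quadratic form. This relies on two facts that must be verified carefully: the equality of the Roe-averaged matrices built from either side of an interface (ensured by the symmetry of Roe's averaging procedure), and the symmetry of $\Pxilortho\mat{M}$ (which uses the diagonal structure of the spectral-collocation norms together with the block-diagonal structure of $\mat{M}$ in the nodal index). Once these are in place, the remaining work is standard telescoping together with the invocation of Theorem~\ref{thrm:EulerScon}, and the accuracy and conservation claims follow as indicated above.
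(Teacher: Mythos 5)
Your proposal is correct and follows essentially the same route as the paper, which itself only cites Ref.~\cite{Fernandez2018prefine} for design order and entropy stability and notes that subcell conservation follows by recasting the dissipation in telescoping flux form; your interface-by-interface collapse of the two one-sided contributions into the non-positive quadratic form $-(a-b)\Tr\Pxilortho\mat{M}(a-b)$, using the symmetry of the Roe average and the block-diagonal/diagonal structure of $\mat{M}$ and $\Pxilortho$, is exactly the standard argument those references carry out. The only looseness is in the design-order step, where for spectral collocation on LGL nodes the extrapolation operators simply evaluate at the boundary nodes, so the jump of the exact smooth solution vanishes and the accuracy claim is in fact immediate.
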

\begin{proof}
The proofs of design order and entropy stability follow similarly to those given in Ref.~\cite{Fernandez2018prefine}; 
subcell conservation follows by recasting the dissipation terms in telescoping flux form.
\end{proof}

\subsection{Extension to the Navier-Stokes equations}\label{sec:NS}
A local discontinuous Ga-lerkin (LDG)-type approach is used to discretize the viscous terms
(see~\cite{Carpenter2014,Carpenter2015,Parsani2015b,Parsani2016}).
Thus, Eq.\eqref{eq:NS_CC} is discretized as follows:
\begin{equation}\label{eq:NSD}
\begin{split}
&\frac{1}{2}\frac{\mr{d}}{\mr{d}t}\left(\matJk\qk\right)+\frac{1}{2}\matJk\frac{\mr{d}\qk}{\mr{d}t}+\FI-\SATI =
\sum\limits_{l,n=1}^{3}\Dxil\matChatijk{l}{n}{k}\Thetaak{n}{\kappa}\\
&-\frac{1}{2}\M^{-1}\sum\limits_{l,n=1}^{3}
\left[\Exil\matChatijk{l}{n}{k}\Thetaak{n}{\kappa}
-\Rxilbeta\Tr\Pxilortho\Rxilalpha\matChatijk{l}{n}{2l}\Thetaak{n}{2l}\right.\\
&\left.-\Rxilalpha\Tr\Pxilortho\Rxilbeta\matChatijk{l}{n}{(2l-1)}\Thetaak{n}{(2l-1)}\right]
+\IPk,
\end{split}
\end{equation}
where 
\begin{equation}\label{eq:Theta}
\begin{split}
& \Thetaak{n}{\kappa}=\Dxia{n}\wk \\
& -\frac{1}{2}\M^{-1}\left(\Exia{n}\wk-\Rxiabeta{n}\Tr\Pxiaortho{n}\Rxiaalpha{n}\wtwoa{n}-\Rxiaalpha{n}\Tr\Pxiaortho{n}\Rxiabeta{n}\wtwoamone{n}\right).
\end{split}
\end{equation}
The terms $\FI$ and $\SATI$ are the discretization of the inviscid portion of the governing equations. The interior penalty term, $\IPk$, 
adds interface dissipation and is given as
\begin{equation}\label{eq:IP}
\begin{split}
\IPk\equiv&-\M^{-1}\sum\limits_{l=1}^{3}\left(\matJk^{-1}\matChatijk{l}{l}{\kappa,(2l-1)}\right)\Rxilalpha\Tr\Pxilortho\left(\Rxilalpha\wk-\Rxilbeta\wtwolmone\right)\\
&-\M^{-1}\sum\limits_{l=1}^{3}\left(\matJk^{-1}\matChatijk{l}{l}{\kappa,2l}\right)\Rxilbeta\Tr\Pxilortho\left(\Rxilbeta\wk-\Rxilalpha\wtwol\right),
\end{split}
\end{equation}
where the notation, for example $\matChatijk{l}{l}{\kappa,2l}$, implies that $\hat{\mat{C}}$ is constructed from the Roe average of the solutions $\qk$ and $\qtwol$.

The next theorem demonstrates that the discretization of the viscous terms leads 
to an entropy stable formulation.
\begin{theorem}\label{thrm:NS_Scon}
Assuming that the inviscid portion is entropy conservative, then the 
semi-discretization \eqref{eq:NSD} is entropy stable and freestream preserving. 
\end{theorem}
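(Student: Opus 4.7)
The plan is to mirror the continuous entropy analysis of Section \ref{sec:continuous_entropy} at the discrete level, splitting the work into (i) a reduction of the time and inviscid portions to the entropy-conservative statement of Theorem~\ref{thrm:EulerScon}, (ii) a treatment of the LDG-type viscous discretization that produces a sign-definite bilinear form in the discrete gradients of the entropy variables, and (iii) verification of freestream preservation.

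First I would contract \eqref{eq:NSD} from the left with $\wk\Tr\M$ and sum over all elements. By Theorem~\ref{thrm:EulerScon}, the time-derivative portion together with the inviscid $\FI-\SATI$ contributions telescope to an equation of the form $\sum_\kappa \onesca\Tr\Msca\frac{\mr d}{\mr d\tau}(\matJksca\mathcal{S}_\kappa)$ modulo entropy-conservative interface fluxes, so the problem reduces to showing that the viscous volume discretization, the viscous surface SATs, and the interior penalty $\IPk$ together contribute a non-positive bilinear form (plus physical boundary data handled by the SATs).

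Second I would attack the viscous volume term $\wk\Tr\M\sum_{l,n}\Dxil\matChatijk{l}{n}{\kappa}\Thetaak{n}{\kappa}$ using the SBP identity $\M\Dxil=\Qxil=\Sxil+\tfrac12\Exil$. The skew piece, combined with the explicit definition of $\Thetaak{n}{\kappa}$ in \eqref{eq:Theta}—whose volume piece is $\Dxia{n}\wk$ and whose surface correction is in LDG alternating-flux form—produces, after one more application of SBP, the symmetric bilinear form $-\sum_{l,n}(\Thetaak{l}{\kappa})\Tr\matChatijk{l}{n}{\kappa}\Thetaak{n}{\kappa}$. This is non-positive because the block matrix $[\hat{\mat{C}}^{l,n}]_{\kappa}$ assembled from~\eqref{eq:Chatij} inherits positive semi-definiteness from the continuous $\mat{C}_{l,m}$. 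The boundary pieces from $\Exil$, the explicitly written viscous surface SATs in \eqref{eq:NSD}, and the LDG boundary correction in \eqref{eq:Theta} must be tracked carefully: when summed over the two elements sharing a face, the primary-trace contributions from one side cancel the auxiliary-trace contributions chosen from the opposite side, leaving only jumps in $\wk$.

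Third, I would add $\wk\Tr\M\IPk$, use~\eqref{eq:IP}, and employ the symmetry of $\Rxilalpha\Tr\Pxilortho\Rxilalpha$ together with the positive semi-definiteness of the face-evaluated $\matChatijk{l}{l}{\kappa,\cdot}$ to obtain, per interface, a term of the form $-\bigl(\Rxilalpha\wk-\Rxilbeta\wtwolmone\bigr)\Tr\Pxilortho\matChatijk{l}{l}{\kappa,(2l-1)}\bigl(\Rxilalpha\wk-\Rxilbeta\wtwolmone\bigr)$, which is strictly non-positive. Assembling the three contributions produces the global entropy inequality, establishing entropy stability. For freestream preservation, the key observation is that for a constant state $\qk=\qk^{(c)}$ the entropy variables $\wk$ are likewise constant, so $\Dxia{n}\wk=0$ and the LDG boundary corrections in \eqref{eq:Theta} vanish because $\wk=\wtwoamone=\wtwoa$; therefore $\Thetaak{n}{\kappa}=\bm 0$, which kills the entire viscous volume sum, all viscous SATs, and $\IPk$ simultaneously. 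The inviscid remainder is freestream preserving by Theorem~\ref{thrm:EulerFS} under the same GCL conditions \eqref{eq:GCLtaudiss}--\eqref{eq:GCLxildiss}.

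The main obstacle is the bookkeeping in the second paragraph: matching, on each interior face, the surface residue produced by applying SBP to the LDG viscous volume term against the explicitly written alternating-trace surface SATs in \eqref{eq:NSD} so that the cross-element cancellation is exact. This hinges on the asymmetric $\alpha/\beta$ choice of traces in \eqref{eq:Theta} versus \eqref{eq:NSD} being consistent (primary $\bm u$ extrapolated from one side, auxiliary $\bm\Theta$ from the other), and once that is verified the remaining quadratic forms are manifestly sign-definite by the positive semi-definiteness of $\hat{\mat{C}}^{l,n}$.
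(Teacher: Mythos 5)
Your proposal is correct and follows essentially the same route as the paper's (sketched) proof: contract with $\wk\Tr\M$, reduce the time/inviscid part to Theorem~\ref{thrm:EulerScon}, expose the negative semi-definite quadratic form $-\sum_{l,n}\left(\Thetaak{l}{\kappa}\right)\Tr\matChatijk{l}{n}{\kappa}\M\Thetaak{n}{\kappa}$ from the LDG structure, verify interface cancellation and the dissipativity of $\IPk$, and obtain freestream preservation from $\Thetaak{n}{\kappa}=\bm{0}$ for constant states together with Theorem~\ref{thrm:EulerFS}. The only cosmetic difference is that the paper reveals the quadratic form by contracting the auxiliary equation \eqref{eq:Theta} with $\left(\Thetaak{n}{\kappa}\right)\Tr\matChatijk{l}{n}{\kappa}\M$ and adding, whereas you substitute \eqref{eq:Theta} into the SBP-transposed volume term; the two manipulations are algebraically equivalent.
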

\begin{proof}
The proof of entropy stability  follows similarly to that given in Ref.~\cite{Fernandez2018prefine}. Here, we sketch the steps in the proof:
\begin{itemize}
\item Contract the viscous portions of Eq.~\eqref{eq:NSD} with $\wk\Tr\M$, contract Eq.~\eqref{eq:Theta} with 
$\left(\Thetaak{n}{\kappa}\right)\Tr\matChatijk{l}{n}{k}\M$ for $n=1,2,3$, and add the result.
\item After simplification, the result is of the form
\[
\frac{1}{2}\wk\Tr\frac{\mr{d}}{\mr{d}t}\left(\matJk\qk\right)+\wk\Tr\frac{1}{2}\matJk\frac{\mr{d}\qk}{\mr{d}t}=
-\sum\limits_{l,n=1}^{3}\left(\Thetaak{n}{\kappa}\right)\Tr\matChatijk{l}{n}{k}\M\Thetaak{n}{\kappa}+CT,
\]
where the first term on the right-hand side can be shown to be negative semi-definite and $CT$ represents the coupling terms from the SATs.
\item Summing over the elements, it can be shown that the interface coupling terms cancel and assuming 
appropriate boundary SATs, the remaining terms at the boundaries are consistent with the continuous analysis.
\end{itemize}

The proof of freestream preservation is identical to that of Theorem \ref{thrm:EulerFS},  taking into account the fact that 
all viscous fluxes and SAT terms vanish if the state vector is constant.
\end{proof}


\section{Implementation}\label{sec:implementation}
\subsection{Approximation of the metrics}\label{sec:metrics}
As follows from Theorem \ref{thrm:EulerScon}, the discrete GCL equations (\ref{eq:GCLtaudiss}--\ref{eq:GCLxildiss}) must be satisfied exactly.
Note that this property is not immediately satisfied. Several approaches have been proposed in the literature to address this problem 
\cite{Thomas1979, Visbal2002, Abe2014, Sjogreen2014}. These studies suggest that the following three properties should hold to guarantee that the GCL equations are satisfied exactly at the discrete level:
\begin{enumerate}
\item The discrete operators used to approximate the derivatives of the inviscid fluxes  should also be used to approximate the metric coefficients;
\item The metric tensor $\bm{\xi}_{\bm{x}}$ cannot be calculated directly by inverting  $\bm{x}_{\bm{\xi}}$, but instead it should be discretized so that 
it satisfies the GCL equation (\ref{eq:GCLxildiss});
\item The grid Jacobian  should be
computed by integrating the first discrete GCL equation (\ref{eq:GCLtaudiss}) using the same time integrator employed for approximating the governing equations.
\end{enumerate}
To satisfy the first two properties, the metrics in the present analysis are discretized by using the approach developed in \cite{Thomas1979}, which is selected because of its 
computational efficiency. Using the SBP spectral collocation operators defined in Section \ref{sec:notation}, the metric tensor  $\bm{x}_{\bm{\xi}}$ is 
discretized as follows:
\begin{equation}\label{eq:metrics}
\bm{x}_{\bm{\xi}} = \left[
\begin{array}{lll}
\mat{D}_{\xione}x_1 & \mat{D}_{\xitwo}x_1 & \mat{D}_{\xithree}x_1 \\
\mat{D}_{\xione}x_2 & \mat{D}_{\xitwo}x_2 & \mat{D}_{\xithree}x_2 \\
\mat{D}_{\xione}x_3 & \mat{D}_{\xitwo}x_3 & \mat{D}_{\xithree}x_3 
\end{array}
\right].
\end{equation}
The inverse metric coefficients are then calculated using the following formulas:
\begin{equation}\label{eq:Inv_metrics}
\begin{array}{l}
\mat{D}_{x_1}\xione =  J \left( \mat{D}_{\xithree} \left[\mat{D}_{\xitwo}x_2\right]  x_3 - \mat{D}_{\xitwo} \left[\mat{D}_{\xithree}x_2\right]  x_3 \right), \\
\mat{D}_{x_2}\xione =  J \left( \mat{D}_{\xitwo} \left[\mat{D}_{\xithree}x_1\right]  x_3 - \mat{D}_{\xithree} \left[\mat{D}_{\xitwo}x_1\right]  x_3 \right), \\
\mat{D}_{x_3}\xione =  J \left( \mat{D}_{\xithree} \left[\mat{D}_{\xitwo}x_1\right]  x_2 - \mat{D}_{\xitwo} \left[\mat{D}_{\xithree}x_1\right]  x_2 \right), \\

\mat{D}_{x_1}\xitwo =  J \left( \mat{D}_{\xione} \left[\mat{D}_{\xithree}x_2\right]  x_3 - \mat{D}_{\xithree} \left[\mat{D}_{\xione}x_2\right]  x_3 \right), \\
\mat{D}_{x_2}\xitwo =  J \left( \mat{D}_{\xithree} \left[\mat{D}_{\xione}x_1\right]  x_3 - \mat{D}_{\xione} \left[\mat{D}_{\xithree}x_1\right]  x_3 \right), \\
\mat{D}_{x_3}\xitwo =  J \left( \mat{D}_{\xione} \left[\mat{D}_{\xithree}x_1\right]  x_2 - \mat{D}_{\xithree} \left[\mat{D}_{\xione}x_1\right]  x_2 \right), \\

\mat{D}_{x_1}\xithree =  J \left( \mat{D}_{\xitwo} \left[\mat{D}_{\xione}x_2\right]  x_3 - \mat{D}_{\xione} \left[\mat{D}_{\xitwo}x_2\right]  x_3 \right) ,\\
\mat{D}_{x_2}\xithree =  J \left( \mat{D}_{\xione} \left[\mat{D}_{\xitwo}x_1\right]  x_3 - \mat{D}_{\xitwo} \left[\mat{D}_{\xione}x_1\right]  x_3 \right) ,\\
\mat{D}_{x_3}\xithree =  J \left( \mat{D}_{\xitwo} \left[\mat{D}_{\xione}x_1\right]  x_2 - \mat{D}_{\xione} \left[\mat{D}_{\xitwo}x_1\right]  x_2 \right) ,
\end{array}
\end{equation}
where the Jacobian $J$ is the determinant of the metric tensor $\bm{x}_{\bm{\xi}}$ given by Eq. (\ref{eq:metrics}).
The metric coefficients given by Eq. (\ref{eq:Inv_metrics}) satisfy the second discrete GCL equation (\ref{eq:GCLxildiss}) exactly.
Though there are other formulas for calculating the metric coefficients that identically satisfy Eq. (\ref{eq:GCLxildiss})  (e.g., see \cite{Abe2014, Sjogreen2014}),
these alternative approaches are more computationally expensive and have therefore not been considered in the present analysis.

As has been noted above, the grid Jacobian $J$ should satisfy the GCL equations (\ref{eq:GCLtaudiss}--\ref{eq:GCLxildiss}) exactly. 
There are at least three possible options for calculating $J$, which may provide this property. 

The first option is to discretely integrate Eq. (\ref{eq:GCLtaudiss}) by using the same temporal and spatial derivative
operators used for discretization of the governing equations. This numerically integrated Jacobian is then used to calculate
the inverse metric coefficients given  by  Eq. (\ref{eq:Inv_metrics}). Though this approach satisfies Eq. (\ref{eq:GCLxildiss})
exactly, Eq. (\ref{eq:GCLtaudiss}) may not be satisfied identically, because the integrated and determinant-based Jacobians
differ from each other by the magnitude of the order of the truncation error. 
Our numerical results show that this integrated Jacobian approach provides
the freestream preservation only with the accuracy that is 2--3 orders of magnitude larger than the machine zero. This is the main reason why this 
approach is not used in the present analysis.

The second approach uses the  integrated and determinant-based Jacobians simultaneously. The key difference of this approach from the previous one is that both the spatial
terms in the discretized Navier-Stokes equations (\ref{eq:skewEulersemi})  and the mesh velocity terms in Eq. (\ref{eq:GCLtaudiss}) 
are calculated by using the determinant-based Jacobian, while the Jacobian in the time derivative term is computed by integrating
Eq. (\ref{eq:GCLtaudiss}). This method uses the conservative discretization in time, satisfies both GCL equations exactly, and provides the freestream preservation to machine accuracy. This dual-Jacobian approach is used in all numerical experiments presented in Section \ref{sec:results}.

The last option considered is based on a nonconservative approximation of the time derivative term in Eq. (\ref{eq:Inv_metrics}). Using the
product rule, the time derivative term can be represented as follows:
\begin{equation*}
\frac{\mr{d}(\matJk\qk)}{\mr{d}\tau} = \matJk\frac{\mr{d}\qk}{\mr{d}\tau}  + \left[\frac{\mr{d}\matJk}{\mr{d}\tau}\right]\qk
\end{equation*}
In this approach, the grid Jacobian  is evaluated by calculating the determinant of the metric tensor $\bm{x}_{\bm{\xi}}$. The 
time derivative of the Jacobian, $\frac{\mr{d}\matJk}{\mr{d}\tau}$, is computed using the GCL equation (\ref{eq:GCLtaudiss}). 
Though this approach satisfies both GCL equations exactly, it is nonconservative in time and therefore not used in the present analysis. 

\subsection{Entropy conservative fluxes}\label{sec:Efluxes}
As follows from Theorem \ref{thrm:AlmkDxil}, the two-point entropy conservative flux functions $\bfnc{U}^{sc}$ and $\bfnc{F}^{sc}_{\xm}$ should be  symmetric, consistent, and 
satisfy the shuffle conditions \eqref{eq:Uflux} and \eqref{eq:Fflux}, respectively. For the Euler and Navier-Stokes equations, a computationally  affordable entropy consistent 
flux function $\bfnc{F}^{sc}_{\xm}$ satisfying all the required constraints has been derived by Ismail and Roe in \cite{Ismail2009}.
It should be noted that this two-point  entropy consistent flux function $\bfnc{F}^{sc}_{\xm}$ is not unique.
We refer the reader to  \cite{Ranocha2018} for a review of alternative entropy consistent flux functions satisfying \eqref{eq:Fflux}. 

Using the same technique developed in \cite{Ismail2009} for constructing  $\bfnc{F}^{sc}_{\xm}$, the following two-point entropy consistent fluxes  satisfying 
the shuffle condition \eqref{eq:Uflux} can be derived:

\begin{equation}\label{eq:Usc_flux1}
\bfnc{U}_1^{sc} = \left[
\begin{array}{c}
\overline{Z_5}_{\rm{log}}\overline{Z_1} \\
\overline{Z_5}_{\rm{log}}\overline{Z_2} \\
\overline{Z_5}_{\rm{log}}\overline{Z_3} \\
\overline{Z_5}_{\rm{log}}\overline{Z_4} \\
\frac 12 \overline{Z_5}_{\rm{log}}\left( \frac{R(\gamma+1)}{\overline{Z_1}_{\rm{log}}(\gamma-1)} + \frac 1{\overline{Z_1}}\left( \overline{Z_2}^2+ \overline{Z_3}^2 +\overline{Z_4}^2
- R\frac{\overline{Z_5}}{ \overline{Z_5}_{\rm{log}}}  \right) \right) 
\end{array}
\right] ,
\end{equation}

\begin{equation}\label{eq:Usc_flux2}
\bfnc{U}_2^{sc} = \left[
\begin{array}{c}
\overline{Z_1}_{\rm{log}} \\
\overline{Z_1}_{\rm{log}} \overline{Z_2} \\
\overline{Z_1}_{\rm{log}} \overline{Z_3} \\
\overline{Z_1}_{\rm{log}} \overline{Z_4} \\
\overline{Z_1}_{\rm{log}}\left( \frac{R}{(\gamma-1) \overline{ Z_5}_{\rm{log}}} + \overline{Z_2}^2+ \overline{Z_3}^2 +\overline{Z_4}^2 -\frac 12 \left( \overline{Z_2^2}+ \overline{Z_3^2} +\overline{Z_4^2} \right) \right) 
\end{array}
\right],
\end{equation}
where 
$\fnc{Z}=\left[\frac{1}{\sqrt{T}}, \frac{V_1}{\sqrt{T}},\frac{V_2}{\sqrt{T}},\frac{V_3}{\sqrt{T}}, \rho\sqrt{T}  \right]\Tr$ for  $\bfnc{U}_1^{sc}$ and
$\fnc{Z}=\left[\rho, V_1, V_2, V_3, \frac 1T \right]\Tr$ for  $\bfnc{U}_2^{sc}$.
In the above equations, $\overline{Z}$ and $\overline{Z}_{\rm{log}}$ are the arithmetic and logarithmic averages:
\begin{equation}\label{eq:Avr}
\begin{array}{l}
 \overline{Z}= \frac{Z_L+Z_R}2 \\
\overline{Z}_{\rm{log}}= \frac{Z_R - Z_L}{\log(Z_R) - \log(Z_L)} ,
\end{array}
\end{equation}
where the subscripts $L$ and $R$ correspond to two arbitrary, yet physically meaningful, states of the flow variables.

\begin{lemma}\label{thrm:Uflux}
The two-point fluxes  given by Eqs. (\ref{eq:Usc_flux1}) and (\ref{eq:Usc_flux2}) are symmetric, consistent, and satisfy the shuffle condition \eqref{eq:Uflux}.
\end{lemma}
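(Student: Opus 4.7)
My plan is to verify the three claims in order, with the shuffle condition being the substantive step. Symmetry of both $\bfnc{U}_1^{sc}$ and $\bfnc{U}_2^{sc}$ under the interchange $L\leftrightarrow R$ is immediate, because every component is built entirely from the arithmetic mean $\overline{Z}$ and the logarithmic mean $\overline{Z}_{\log}$ of the parameter-vector entries, and both means are manifestly symmetric in their two arguments. Consistency reduces to evaluating at $Z_L = Z_R = Z$, for which $\overline{Z}\to Z$ and $\overline{Z}_{\log}\to Z$; each component then collapses to the corresponding entry of $\bfnc{U}$. For instance, the first component of $\bfnc{U}_1^{sc}$ becomes $Z_5 Z_1 = \rho\sqrt{\fnc{T}}\cdot 1/\sqrt{\fnc{T}} = \rho$, the first of $\bfnc{U}_2^{sc}$ is directly $Z_1=\rho$, and after a short cancellation of the $R(\gamma+1)/(\gamma-1)$ factor against the $-R\,\overline{Z}_5/\overline{Z_5}_{\log}$ term, the fifth components reduce to $\rho c_v \fnc{T} + \tfrac{1}{2}\rho\bfnc{V}\Tr\bfnc{V} = \rho\fnc{E}$.

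For the shuffle condition \eqref{eq:Uflux}, I would follow the algebraic procedure used by Ismail and Roe in \cite{Ismail2009} to construct $\bfnc{F}_{\xm}^{sc}$. Introduce jump and mean notation $\Delta\phi\equiv\phi_R-\phi_L$ and $\{\phi\}\equiv(\phi_L+\phi_R)/2$, together with the two identities
\begin{equation*}
\Delta(ab)=\{a\}\,\Delta b+\{b\}\,\Delta a,\qquad \Delta\log Z=\Delta Z/\overline{Z}_{\log}.
\end{equation*}
First I would re-express the entropy variables $\bfnc{W}$ from \eqref{eq:Evariables} and the entropy potential $\varphi=\bfnc{W}\Tr\bfnc{U}-\mathcal{S}$ from \eqref{eq:potential} in terms of the chosen parameter vector ($\fnc{Z}_1$ for $\bfnc{U}_1^{sc}$ and $\fnc{Z}_2$ for $\bfnc{U}_2^{sc}$). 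Each $\bfnc{W}$-component then becomes a rational or logarithmic expression in the $Z_i$, so that $\Delta\bfnc{W}$ decomposes, through the two identities above, into a linear combination of the five independent jumps $\Delta Z_i$ with coefficients built from arithmetic and logarithmic means. I would then form $\Delta\bfnc{W}\Tr\bfnc{U}^{sc}$ for the proposed flux, expand $\Delta\varphi$ in the same basis, and establish equality by matching the coefficient of each $\Delta Z_i$ on the two sides.

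The ordering that makes this tractable is: the coefficients of $\Delta Z_2$, $\Delta Z_3$, $\Delta Z_4$ fix the momentum rows of $\bfnc{U}^{sc}$; the coefficient of $\Delta Z_5$ (respectively $\Delta Z_1$) then fixes the density row of $\bfnc{U}_1^{sc}$ (respectively $\bfnc{U}_2^{sc}$); and the remaining jump determines the energy row. The main obstacle, as in \cite{Ismail2009}, is this energy row: the fifth entropy variable is $-1/\fnc{T}$, the total energy $\rho\fnc{E}=\rho c_v\fnc{T}+\tfrac{1}{2}\rho\bfnc{V}\Tr\bfnc{V}$ is quadratic in the velocities, and $\mathcal{S}=-\rho s$ contains both $\log\fnc{T}$ and $\log\rho$, so the expansion of $\Delta\varphi$ mixes logarithmic jumps, quadratic cross terms $\{Z_i\}\Delta Z_j$, and pure jumps in the thermodynamic variables. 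Showing that these cross products reassemble into the specific mixed arithmetic/logarithmic structure appearing in the energy rows of \eqref{eq:Usc_flux1}--\eqref{eq:Usc_flux2} is exactly where the choices $\fnc{Z}_1$ and $\fnc{Z}_2$, which render $\rho$, $\rho\bfnc{V}$ and $\rho\fnc{E}$ as simple monomials in the $Z_i$, make the algebra close. Once each $\Delta Z_i$-coefficient is matched, the shuffle condition holds for all admissible states with $\rho>0$ and $\fnc{T}>0$, completing the proof.
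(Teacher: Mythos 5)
Your proposal is correct and follows essentially the same route as the paper's proof: symmetry from the symmetry of the arithmetic and logarithmic means, consistency by setting $Z_L=Z_R$ (your limit $\overline{Z}_{\log}\to Z$ is exactly the paper's L'H\^opital step), and the shuffle condition by direct substitution and algebraic verification. The paper states the last step in one line, whereas you spell out the Ismail--Roe jump-decomposition machinery that actually carries it out, but there is no difference in approach.
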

\begin{proof}
The symmetry of the entropy consistent fluxes  $\bfnc{U_1}^{sc}$ and $\bfnc{U_2}^{sc}$ follows immediately form the symmetry of the arithmetic
and logarithmic averages given by Eq. \eqref{eq:Avr}. The consistency can be directly verified by substituting $Z_L=Z_R=Z$ in Eqs. (\ref{eq:Usc_flux1}--\ref{eq:Usc_flux1})
and using the L'Hopital's rule for evaluating the logarithmic averages.
The shuffle condition \eqref{eq:Uflux} can be verified directly by substituting  Eqs. (\ref{eq:Usc_flux1}--\ref{eq:Usc_flux1}) in Eq. \eqref{eq:Uflux}.
\end{proof}
\begin{remark}
 As evident from Eqs. (\ref{eq:Usc_flux1}--\ref{eq:Usc_flux1}), the two-point entropy consistent flux  $\bfnc{U}^{sc}$ is not unique and
other two-point  flux functions satisfying Eq. \eqref{eq:Uflux} can be readily constructed by using different auxiliary flow variables $Z$.
\end{remark}

\section{Results}\label{sec:results}
The accuracy and freestream preservation properties of the proposed high-order entropy stable spectral collocation schemes 
for the 3-D Euler and Navier-Stokes equations on moving and deforming grids are tested using 
standard benchmark problems including isentropic vortex and viscous shock flows.  
In all simulations presented herein, the low-storage, five-stage, fourth-order Runge-Kutta
scheme developed in \cite{KCL2000} is used to advance the semi-discretization in time. Note
that this scheme violates the entropy stability property of the semi-discrete
operator by a factor proportional to the local temporal truncation error. In the present analysis, the temporal error 
is evaluated and monitored by using the Runge-Kutta built-in error estimator. For all test problems considered, 
the temporal error is at least 2-3 orders of magnitude less than the corresponding spatial error.
The spatial derivatives in the Euler and Navier-Stokes equations  are discretized using the entropy stable 
spectral collocation schemes with  two-point entropy conservative flux given by Eq. \eqref{eq:Usc_flux2} and
the polynomial bases of various degrees ($p = 3, 4, 5$).
It should be emphasized that no additional artificial dissipation other than that introduced by the entropy 
consistent Roe-type flux (see  Eq. \eqref{eq:diss}) at element interfaces is used for all numerical experiments presented in this paper.

\subsection{Inviscid flows}\label{sec:inviscid}
To evaluate the accuracy of the new entropy stable spectral collocations schemes for the Euler equations,
we consider the propagation of an isentropic vortex in a moving deforming domain. The isentropic vortex is an 
exact solution of the Euler equations, which is given by 
\begin{figure}[htbp]
\label{fig:vortex_contours}
  \hspace{-2ex}
  \includegraphics[width=6.5cm]{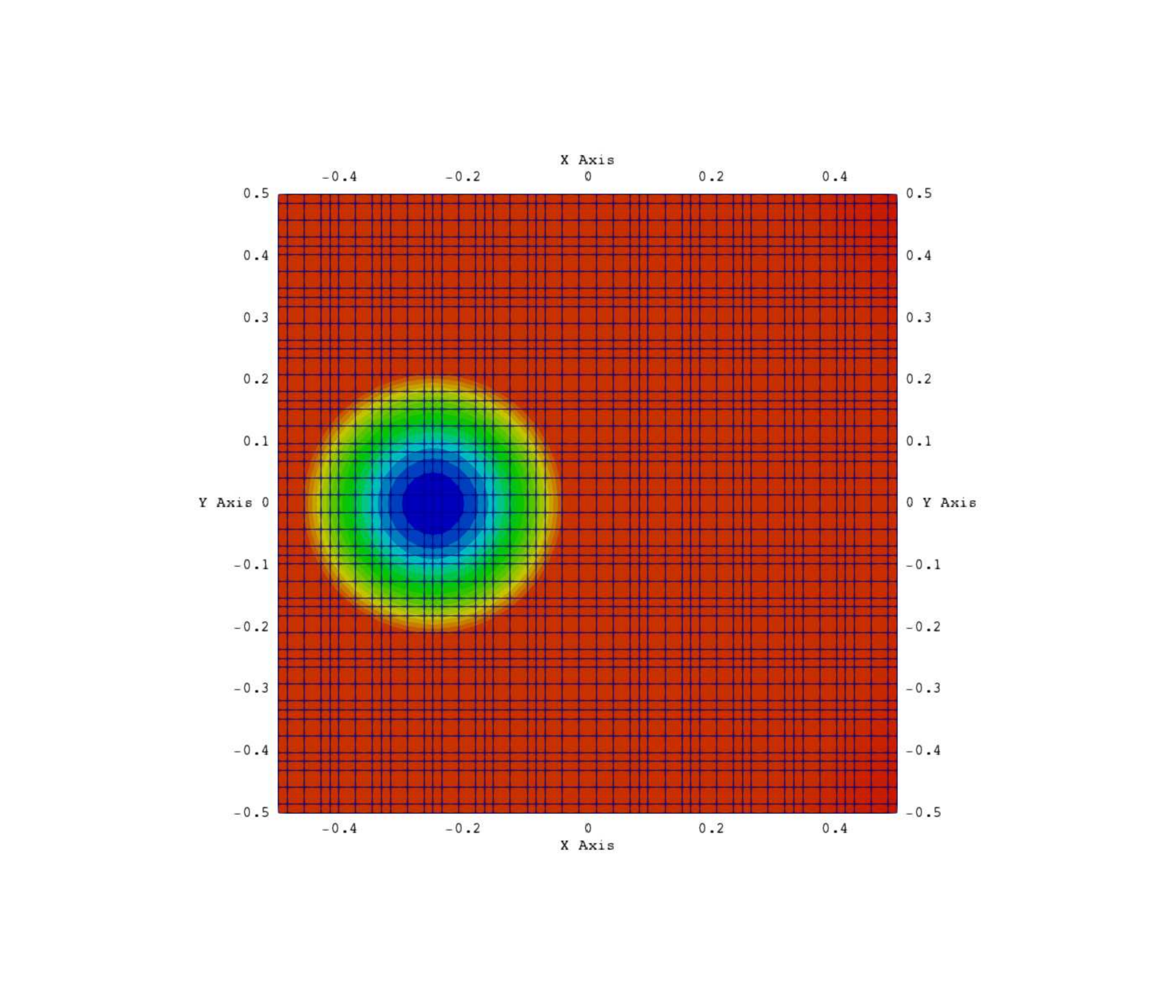}
  \includegraphics[width=6.5cm]{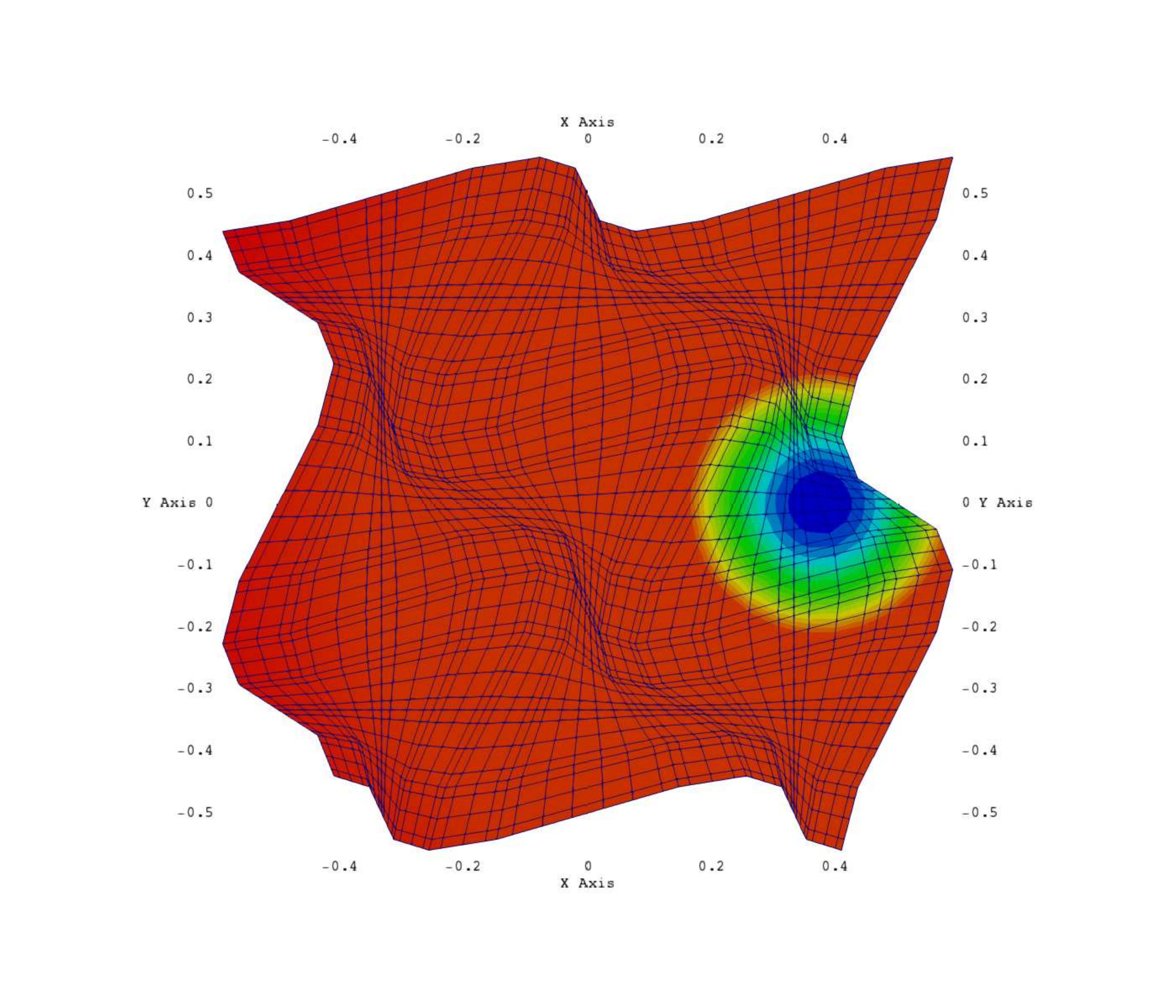}
  \caption{Density contours obtained with the entropy stable spectral collocation 
  scheme ($p=4$) on a dynamic grid at $t=0$ (left panel) and $t=2.5$ for the isentropic vortex problem.}
\end{figure}
\begin{equation}\label{eq:vortex}
\left[
\begin{array}{l}
\rho \\ V_1 \\ V_2\\ V_3\\ T
\end{array}
\right] =
\left[
\begin{array}{c}
T^{\frac 1{\gamma-1}}\\ 
V_{\infty} \cos{\alpha} - \epsilon \frac{y-y_0 -V_{\infty}t\sin{\alpha}}{2 \pi}\exp{\left(\frac{g(x,y,z,t)}2\right)} \\ 
V_{\infty} \sin{\alpha} - \epsilon \frac{x-x_0 -V_{\infty}t\cos{\alpha}}{2 \pi}\exp{\left(\frac{g(x,y,z,t)}2\right)} \\ 
0\\ 
1-\epsilon^2 M^2_{\infty}\frac{\gamma-1}{8\pi^2}\exp{\left(g(x,y,z,t)\right)}
\end{array}
\right],  \\
\end{equation}
\begin{equation*}
g(x,y,z,t) = 1 - \left( x-x_0-V_{\infty}t \cos{\alpha} \right)^2 - \left( y-y_0-V_{\infty}t \sin{\alpha} \right)^2,
\end{equation*}
For this test problem, we set $M_{\infty} = 0.5, V_\infty=0.25, \epsilon = 5.0, x_0 = -0.25, y_0 = 0, \alpha=0$.

The dynamic grid is constructed by mapping vertices of each 
grid cell of the uniform Cartesian grid in the computational domain $\hat{\Omega} = [-1, 1] \times [-1, 1] \times [-1, 1]$
onto the corresponding vertices of a moving deforming hexahedral grid element by using
the following coordinate transformation:
\begin{equation}\label{eq:mapping_vortex}
\begin{split}
& x(\xi, \eta, \zeta, \tau) =  \frac 12 \xi   + A_1 \sin(\omega \tau) \cos\left(\nu\xi - \frac{\pi}4\right)\cos\left(\nu\eta-\frac{\pi}4\right)   \\
& y(\xi, \eta, \zeta, \tau) =  \frac 12 \eta + A_2 \sin(\omega \tau) \cos\left(\nu\xi - \frac{\pi}4\right)\cos\left(\nu\eta-\frac{\pi}4\right)  \\
& z(\xi, \eta, \zeta, \tau) = \frac{\zeta}K,
\end{split}
\end{equation}
where $K$ is the number of elements in the $\xi-$ and $\eta-$directions
and the parameters $A_1$, $A_2$, $\nu$, and $\omega$ are set equal to $0.09$, $0.06$, $3\pi/4$, and $\pi$, respectively.
Note that only the vertices of each hexahedral grid element  are moved  according to Eq. \eqref{eq:mapping_vortex}, 
while the element edges are constructed by connecting the corresponding vertices with straight lines 
and the interior grid points are generated by using the 3-D transfinite interpolation.
As a result, the grid is nonsmooth across all element interfaces.

Though this test problem is two-dimensional in nature, a 3-D  code is used to solve this isentropic vortex flow
for testing the proposed entropy stable spectral collocation schemes on dynamic unstructured grids. In this study,
the Dirichlet boundary conditions are imposed at the moving boundaries along the $\xi-$ and $\eta-$directions
by using the exact solution given by Eq. \eqref{eq:vortex} and the periodic boundary conditions are
used in the $\zeta$-direction. 
The mesh velocity vector appearing in both the proposed numerical scheme \eqref{eq:skewEulersemi} and the GCL equation \eqref{eq:GCLtime}
 is computed analytically by differentiating the time-dependent coordinate transformation given by Eq. \eqref{eq:mapping_vortex}.
\begin{table}[h]
\label{tbl:vortex}
\begin{center}
\begin{tabular}{ccccc}
\hline
 $p = 3$              & $L_2$ error                     & $L_2$ rate    & $L_{\infty}$ error            & $L_{\infty}$ rate   \\
\hline
 $6   \times  6$    & $3.92\times 10^{-5}$     & --                 & $4.26\times 10^{-4}$     &  --                         \\
 $12 \times 12$   & $6.50\times 10^{-6}$     & 2.59             & $9.99\times 10^{-5}$      & 2.09                     \\
 $24 \times 24$   & $7.85\times 10^{-7}$     & 3.05             & $1.55\times 10^{-5}$      & 2.69                     \\
 $48 \times 48$   & $7.85\times 10^{-8}$     & 3.32             & $1.58\times 10^{-6}$      & 3.29                     \\
\hline
\\
 $p = 4$              &                                        &                    &                                       &                             \\
\hline
 $6   \times  6$    & $1.71\times 10^{-6}$      & --                 & $2.63\times 10^{-5}$     &  --                         \\
 $12 \times 12$   & $1.88\times 10^{-7}$      & 3.18             & $3.27\times 10^{-6}$      & 3.00                     \\
 $24 \times 24$   & $1.26\times 10^{-8}$      & 3.89             & $3.39\times 10^{-7}$      & 3.27                     \\
 $48 \times 48$   & $7.00\times 10^{-10}$    & 4.17             & $2.23\times 10^{-8}$      & 3.92                     \\
\hline
\\
 $p = 5$              &                                        &                    &                                       &                             \\
\hline
 $6   \times  6$    & $6.69\times 10^{-8}$      & --                 & $1.00\times 10^{-6}$     &  --                        \\
 $12 \times 12$   & $3.86\times 10^{-9}$      & 4.12             & $9.74\times 10^{-8}$     & 3.36                     \\
 $24 \times 24$   & $1.26\times 10^{-10}$    & 4.93             & $3.34\times 10^{-9}$     & 4.82                     \\
 $48 \times 48$   & $3.19\times 10^{-12}$    & 5.31             & $7.73\times 10^{-11}$    & 5.48                     \\
\hline
\end{tabular}
\end{center}
\caption{Error convergence on dynamic grids for the isentropic vortex problem.}
\end{table}

At the initial moment of time $t=0$, the vortex is centered at $x_0=-0.25, y_0=0$
and the grid is not deformed. The final time is set equal to $2.5$, so that the grid undergoes $1\frac 14$ deformation
cycles and reaches its maximum deformation, while the vortex propagates throughout the entire domain. Density contours
obtained using the new entropy stable spectral collocation scheme for $p=4$
on the $12\times 12\times 1$ dynamic grid at $t = 0$ and $t=2.5$ are shown in Fig. \ref{fig:vortex_contours}. As one can see in the figure,
the grid elements are highly deformed and skewed at $t=2.5$. Despite this large unsteady grid deformations, the numerical solution
is free of spurious oscillations and preserves the symmetry of the vortex.

The error convergence rate of the new high-order entropy stable spectral collocation schemes
is evaluated on a sequence of moving deforming grids with $6\times 6\times1$,  $12\times 12\times1$, 
$24\times 24\times1$, $48\times 48\times1$, elements. The refined dynamic grids are obtained by constructing
a sequence of nested grids in the computational domain and then mapping them onto the physical domain by 
using Eq. \eqref{eq:mapping_vortex}. As mentioned above, all 3-D grids used 
in this grid refinement study are one--element thick in the $\zeta$ direction and their thickness is proportionally 
reduced as the grid is globally refined. 
\begin{figure}[htbp]
\label{fig:Inviscid_freestream}
  \centering
  \includegraphics[width=8.cm]{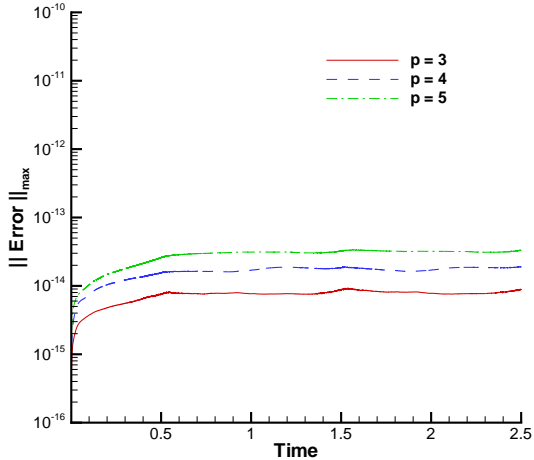}
  \caption{Freestream preservation provided by the entropy stable spectral collocation 
  schemes for the Euler equations on the $12 \times 12 \times 1$ dynamic grid.}
\end{figure}

The $L_2$ and $L_{\infty}$ errors  of the entire solution vector obtained with the new entropy stable schemes for polynomial bases of degree $p = 3, 4, 5$ on globally
refined dynamic grids are presented in Table \ref{tbl:vortex}. The convergence rate in the range of $p$ to $p+0.5$ 
is achieved on  highly deformed, skewed moving grids for all polynomial degrees considered. It should be noted that the $(p+1)$th-order convergence or so-called
superconvergence is not attainable on highly deformed skewed moving meshes.

We now corroborate our theoretical results presented in Theorem \ref{thrm:EulerFS} and show that the proposed spectral collocation schemes
for the Euler equations preserve the freestream solution on the same dynamic grids used for the isentropic vortex problem. Figure \ref{fig:Inviscid_freestream}
shows the $L_{\infty}$ error of the entire vector of conservative variables computed using the new entropy stable schemes \eqref{eq:skewEulersemi} for $p = 3, 4, 5$
on the $12\times 12 \times 1$ grid  given by Eq. \eqref{eq:mapping_vortex} for the constant inviscid flow at $M_{\infty}=0.5$.
As evident from this comparison, the constant solution is preserved on the moving deforming grid with the accuracy of $O(10^{-14})$ for all polynomial degrees considered
over the entire interval of integration. As one can see in Fig.~\ref{fig:Inviscid_freestream}, there is a slight increase in error as the polynomial degree increases. This increase 
can be explained by accumulation of the roundoff error, since the higher order schemes require larger number of operations per
degree of freedom.

\subsection{Viscous flows}\label{sec:viscous}
We now evaluate the accuracy and freestream preservation properties of the proposed entropy stable spectral collocation schemes for
the Navier-Stokes equations on moving deforming unstructured hexahedral grids. 
To assess the effect of grid motion and deformation on the error convergence of the high-order spectral collocation schemes,
the propagation of a viscous shock is considered. For Prandtl number equal to $3/4$, the 1-D compressible Navier-Stokes equations reduce to the single 
momentum equation, which can be solved exactly and used to measure the accuracy of the numerical scheme under consideration 
(e.g., see \cite{Fisher2012phd}). The 3-D exact solution is then obtained by rotating the 1-D viscous shock profile, which is initially located in the middle of the domain,  by $45^\circ$ with respect to both $x-$ and $z-$axes. 
 For this test problem, the Dirichlet boundary conditions are used at all physical boundaries and
 the Reynolds and Mach numbers are set equal to $10$ and $2.5$, respectively.
\begin{figure}[htbp]
\label{fig:vshock_contours}
  \hspace{-2ex}
  \includegraphics[width=6.5cm]{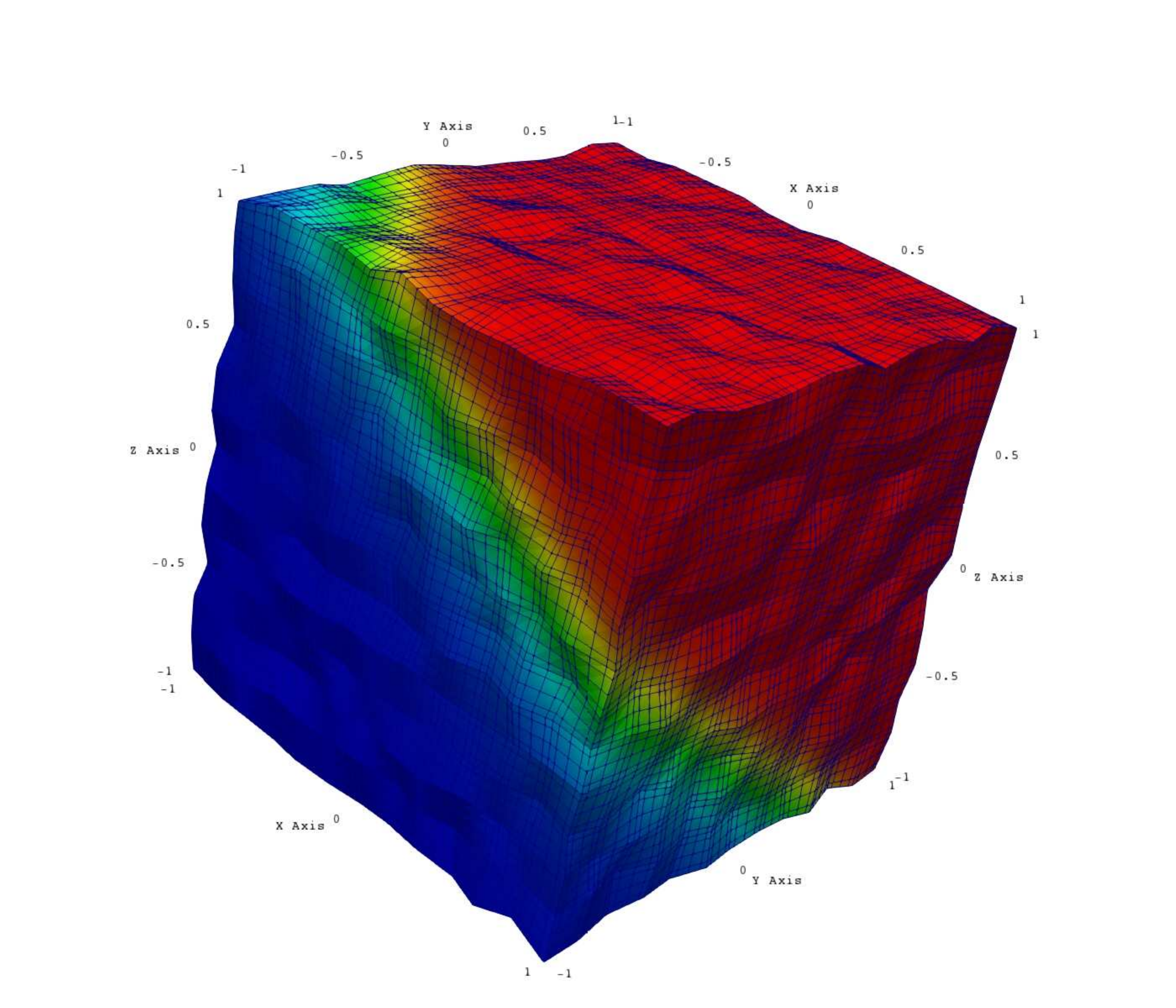}
 \includegraphics[width=6.5cm]{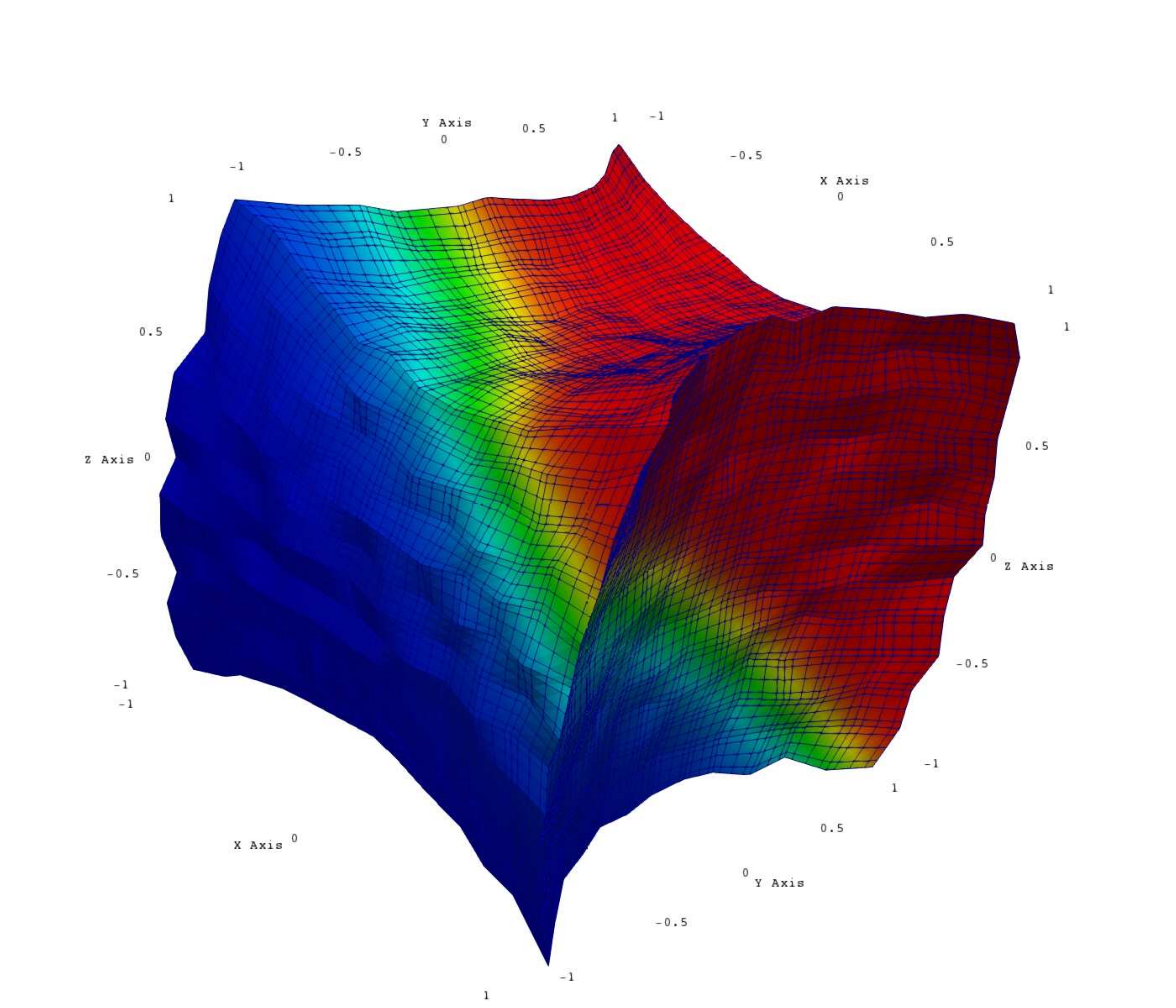}
  \caption{Density contours obtained with the entropy stable spectral collocation 
  scheme ($p=4$) on a dynamic grid at $t=0$ (left panel) and $t=2.5$ for the viscous shock problem.}
\end{figure}
\begin{table}[h]
\label{tbl:vshock}
\begin{center}
\begin{tabular}{ccccc}
\hline
 $p = 3$              & $L_2$ error                     & $L_2$ rate    & $L_{\infty}$ error            & $L_{\infty}$ rate   \\
\hline
 $6   \times  6 \times 6$      & $1.99\times 10^{-3}$     & --                 & $8.33\times 10^{-2}$     &  --                         \\
 $12 \times 12 \times 12$   & $1.95\times 10^{-4}$     & 3.36             & $1.37\times 10^{-2}$      & 2.60                     \\
 $24 \times 24 \times 24$   & $1.85\times 10^{-5}$     & 3.39             & $1.22\times 10^{-3}$      & 3.49                     \\
 $48 \times 48 \times 48$   & $2.56\times 10^{-6}$     & 2.86             & $2.12\times 10^{-4}$      & 2.52                     \\
\hline
\\
 $p = 4$              &                                        &                    &                                       &                             \\
\hline
 $6   \times  6 \times 6  $    & $4.05\times 10^{-4}$      & --                 & $1.52\times 10^{-2}$     &  --                         \\
 $12 \times 12 \times 12$   & $2.55\times 10^{-5}$      & 3.99             & $2.09\times 10^{-3}$      & 2.86                     \\
 $24 \times 24 \times 24$   & $1.19\times 10^{-6}$      & 4.43             & $1.35\times 10^{-4}$      & 3.95                     \\
 $48 \times 48 \times 48$   & $7.04\times 10^{-8}$      & 4.07             & $9.71\times 10^{-6}$      & 3.80                     \\
\hline
\\
 $p = 5$              &                                        &                    &                                       &                             \\
\hline
 $6   \times  6 \times 6  $    & $9.62\times 10^{-5}$     & --                 & $5.36\times 10^{-3}$     &  --                        \\
 $12 \times 12 \times 12$   & $3.53\times 10^{-6}$     & 4.77             & $6.53\times 10^{-4}$     & 3.04                    \\
 $24 \times 24 \times 24$   & $7.42\times 10^{-8}$     & 5.57             & $1.24\times 10^{-5}$     & 5.72                     \\
 $48 \times 48 \times 48$   & $2.28\times 10^{-9}$     & 5.02             & $6.19\times 10^{-7}$     & 4.32                     \\
\hline
\end{tabular}
\end{center}
\caption{Error convergence on dynamic randomly perturbed grids for the viscous shock problem.}
\end{table}

Moving grids for the viscous shock problem are generated by perturbing the 
vertices of a uniform Cartesian grid by $25\%$ of the grid spacing  in a randomly chosen direction and mapping them onto
vertices of a dynamic grid in the physical domain by using the following coordinate transformation:
\begin{equation}\label{eq:mapping_vshock}
\begin{split}
& x(\xi, \eta, \zeta, \tau) =  \xi      + A_1 \sin(\omega \tau) \xi (\eta-1)(\eta+1)   \\
& y(\xi, \eta, \zeta, \tau) =  \eta   + A_2 \sin(\omega \tau) \eta (\zeta-1)(\zeta+1)  \\
& z(\xi, \eta, \zeta, \tau) =  \zeta + A_3 \sin(\omega \tau) \zeta (\xi-1)(\xi+1),  \\
\end{split}
\end{equation}
where the parameters $A_1$, $A_2$, $A_3$, and $\omega$ are set equal to $0.4$, $-0.2$, $0.3$, and $\pi$, respectively.
Similar to the previous test problem, all moving grids for the viscous shock problem are straight-sided and 
the mesh velocities are calculated by differentiating Eq. \eqref{eq:mapping_vshock}.
\begin{figure}[htbp]
\label{fig:Viscous_freestream}
  \centering
  \includegraphics[width=8.cm]{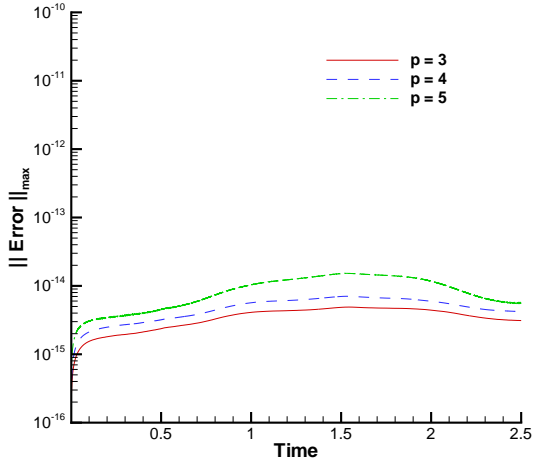}
  \caption{Freestream preservation provided by the entropy stable spectral collocation 
  schemes for the Navier-Stokes equations on the $12 \times 12 \times 1$ dynamic grid.}
\end{figure}

The 3-D unsteady Navier-Stokes equations are integrated until $t=0.5$, which corresponds to an instant in time when the grid reaches the maximum deformation.
Figure \ref{fig:vshock_contours} shows density contours computed using the proposed entropy stable spectral collocation scheme 
for $p=4$ on the randomly perturbed $12\times 12\times 12$ grid at $t=0$ and $t=0.5$. 
The numerical solution obtained on this moving and deforming grid is free of spurious oscillations and accurately approximates 
the shock propagation in the domain.

To evaluate the accuracy of the new entropy stable schemes for the Navier-Stokes equations, we measure the $L_2$ and $L_{\infty}$ errors 
for the viscous shock problem on a sequence of the globally refined randomly perturbed moving grids. Since each grid is individually perturbed, 
the grids are not nested. As follows from our numerical results presented in Table~\ref{tbl:vshock}, the proposed entropy stable schemes
demonstrate in average $p$th-order convergence for all polynomial degrees considered. It should be noted that the error
convergence is nonmonotonic because of the random perturbations  introduced in dynamic grids used in this grid refinement study.

Similar to the Euler equations, a constant flow is an exact solution of the unsteady Navier-Stokes equations. 
To demonstrate that the new entropy stable spectral collocation schemes preserve a freestream viscous flow,
we integrate the unsteady Navier-Stokes equations on the same randomly perturbed dynamic grids generated for the 
viscous shock problem. The $L_{\infty}$ errors of the entire solution vector computed with the new spectral collocation
scheme given by Eq. \eqref{eq:NSD} for $p = 3, 4, 5$ on the $12 \times 12 \times 12$ randomly perturbed moving grid 
are presented in Fig. \ref{fig:Viscous_freestream}. Our numerical results show that the proposed entropy stable scheme
preserves the constant solution with the machine accuracy for all polynomial degrees considered.
Similar to the inviscid flow test case,  there is a slight growth in the roundoff error accumulation as the polynomial degree increases.

\section{Conclusions}\label{sec:conclusions}
New provably stable spectral collocation schemes of arbitrary order of accuracy are developed for the unsteady 3-D Euler and Navier-Stokes equations 
on dynamic unstructured grids. Using the arbitrary Lagrangian-Eulerian formulation, the governing equations are mapped  onto a fixed reference system of coordinates.
To provide the design order of accuracy, entropy stability, and conservation on moving and deforming grids, the skew-symmetric form
of the Euler and Navier-Stokes equations are discretized by using high-order summation-by-parts spectral collocation operators. 
In addition to that, the entropy stability proof requires that the geometric conservation laws (GCL) must be satisfied exactly at the discrete level and
the flux function has to satisfy an additional shuffle condition.
To satisfy the GCL equations exactly,  the metric coefficients are computed by using formulas that guarantee conservation, which are discretized 
by  the same SBP operators employed  to approximate the flux derivatives. 
Furthermore, the spatial derivative terms in the Euler and Navier-Stokes equations
are calculated using the determinant-based Jacobian, while the time derivative term is calculated using the Jacobian obtained by integrating the
corresponding GCL equation. This dual-Jacobian formulation satisfies all GCL equations simultaneously.
Two new two-point entropy conservative flux functions satisfying the required shuffle condition are derived for 
the 3-D Euler and Navier-Stokes equations. 
Our numerical results show that the developed entropy stable spectral collocation schemes are fully conservative, design-order accurate, 
and provide freestream preservation for both the Euler and Navier-Stokes equations on highly deforming, moving unstructured grids.

\appendix
\section{Entropy conservation of the semi-discrete form of the Euler equations}\label{app:Eulerentropycon}
In this appendix, it is proven that the semi-discrete from~\eqref{eq:skewEulersemi} is entropy conservative 
for periodic domains.  If the scheme is entropy conservative on a periodic domain, then it telescopes the entropy 
to the boundaries on non-periodic domains and the analysis of entropy conservation/dissipation reduces to the analysis 
of the weak imposition of boundary conditions. 

The analysis follows in a one-to-one fashion the continuous analysis. The first step is to multiply by the entropy 
variables and discretely integrate in space; this is accomplished by multiplying~\eqref{eq:skewEulersemi} by $\wk\Tr\M$ 
which gives
\begin{multline}\label{eq:skewEulersemi1}
\overbrace{\wk\Tr\M\frac{\mr{d}}{\mr{d}\tau}\matJk\qk}^{I}+\wk\Tr\sum\limits_{l=1}^{3}\overbrace{\left(\Qxil\matBlk+\matBlk\Qxil\right)\circ\matQsc{\qk}{\qk}\one}^{II}\\
+\wk\Tr\sum\limits_{l,m=1}^{3}\overbrace{\left(\Qxil\Almk+\Almk\Qxil\right)\circ\matFxmsc{\qk}{\qk}\one}^{III}=\\
\wk\Tr\M\left(\bm{SAT}_{\tau}+\bm{SAT}_{\xil}\right).
\end{multline}
First, the term $I$ in~\eqref{eq:skewEulersemi1} reduces to
\begin{equation}\label{eq:termI}
\begin{split}
I =\onesca\Tr\Msca\frac{\mr{d}\matJksca\sk}{\mr{d}\tau}+\phik\Tr\Msca\frac{\mr{d}\vecJk}{\mr{d}\tau}.
\end{split}
\end{equation}
where $\vecJk$ is a vector containing the metric Jacobian at the nodes.

Using the symmetry of $\matFxmsc{\qk}{\qk}$, 
the SBP property $\mat{Q}+\mat{Q}=\mat{E}$, 
Theorem~\ref{thrm:telescope}, the consistency of the 
derivative operator, $\mat{D}\bm{1}=\bm{0}\rightarrow\bm{1}\Tr\mat{Q}=\bm{1}\Tr\mat{E}$, and the relation, 
$\varphi=\bfnc{W}\Tr\bfnc{U}-\mathcal{S}$, the term $II$ reduces to 
\begin{equation}\label{eq:termII}
\begin{split}
&II = \phik\Tr\Msca\Dxilsca\matBlksca\onesca+\onesca\Tr\Msca\Dxilsca\matBlksca\sk-\onesca\Tr\Exilsca\matBlksca\wk\Tr\qk\\
&+\one\Tr\left(\Exil\matBlk\right)\circ\matQsc{\qk}{\qk}\wk.
\end{split}
\end{equation}
Using similar relations as for term $II$, term $III$ reduces to
\begin{equation}\label{eq:termIII}
\begin{split}
&III=\left(\psixmk\right)\Tr\Msca\Dxilsca\matAlmksca\onesca-\onesca\Tr\Exilsca\matAlmksca\psixmk\\
&+\one\Tr\left(\Exil\matAlmk\right)\circ\matFxmsc{\qk}{\qk}\wk.
\end{split}
\end{equation}
Inserting~\eqref{eq:termI}, \eqref{eq:termII}, \eqref{eq:termIII} into~\eqref{eq:skewEulersemi1} and rearranging gives
\begin{multline}\label{eq:skewEulersemi2}
\onesca\Tr\Msca\left[\frac{\mr{d}\matJksca\sk}{\mr{d}\tau}+\sum\limits_{l=1}^{3}\Dxilsca\matBlksca\sk\right]+
\phik\Tr\Msca\left[\frac{\mr{d}\vecJk}{\mr{d}\tau}+\sum\limits_{l=1}^{3}\Dxilsca\matBlksca\onesca\right]\\
+\left(\psixmk\right)\Tr\Msca\left[\sum\limits_{l,m=1}^{3}\Dxilsca\matAlmksca\onesca\right]
=\\
\sum\limits_{l=1}^{3}\left[\onesca\Tr\Exilsca\matBlksca\wk\Tr\qk-\one\Tr\left(\Exil\matBlk\right)\circ\matQsc{\qk}{\qk}\wk\right]\\
+\sum\limits_{l,m=1}^{3}\left[\onesca\Tr\Exilsca\matAlmksca\psixmk-\one\Tr\left(\Exil\matAlmk\right)\circ\matFxmsc{\qk}{\qk}\wk\right]\\
+\wk\Tr\M\left(\bm{SAT}_{\tau}+\bm{SAT}_{\xil}\right).
\end{multline}
Ignoring for now the terms on the right-hand side of~\eqref{eq:skewEulersemi2}, since the term 
\begin{equation*}
\begin{split}
&\onesca\Tr\Msca\left[\frac{\mr{d}\matJksca\sk}{\mr{d}\tau}+\sum\limits_{l=1}^{3}\Dxilsca\matBlksca\sk\right]\approx\\
&\int_{\Ohatk}\frac{\partial\Jk\mathcal{S}}{\partial\tau}+\sum\limits_{l=1}^{3}\frac{\partial}{\partial\xil}\left(\Blk\mathcal{S}\right)\mr{d}\Ohat=
\int_{\Omega_{\kappa}}\frac{\partial\mathcal{S}}{\partial t}\mr{d}\Omega,
\end{split}
\end{equation*}
and is therefore mimetic of the appropriate continuous term, this implies that 
\begin{equation*}
\frac{\mr{d}\vecJk}{\mr{d}\tau}+\sum\limits_{l=1}^{3}\Dxilsca\matBlksca\onesca = \bm{0},\qquad
\sum\limits_{l=1}^{3}\Dxilsca\matAlmksca\onesca = \bm{0},\qquad m = 1,2,3,
\end{equation*}
which are nothing more than discretizations of the continuous GCL~\eqref{eq:GCLtime} and~\eqref{eq:GCLspace}.

Assuming the above hold, 
\small
\begin{multline}\label{eq:skewEulersemi3}
\onesca\Tr\Msca\left[\frac{\mr{d}\matJksca\sk}{\mr{d}\tau}+\sum\limits_{l=1}^{3}\Dxilsca\matBlksca\sk\right]
=\\
\overbrace{\wk\Tr\M\bm{SAT}_{\tau}+\sum\limits_{l=1}^{3}\left[\onesca\Tr\Exilsca\matBlksca\wk\Tr\qk-\one\Tr\left(\Exil\matBlk\right)\circ\matQsc{\qk}{\qk}\wk\right]}^{IV}+\\
\overbrace{\wk\Tr\M\bm{SAT}_{\xil}+\sum\limits_{l,m=1}^{3}\left[\onesca\Tr\Exilsca\matAlmksca\psixmk-\one\Tr\left(\Exil\matAlmk\right)\circ\matFxmsc{\qk}{\qk}\wk\right]}^{V}.
\end{multline}
\normalsize
Next, the term $IV$ reduces to
\begin{equation}\label{eq:termIIII}
\begin{split}
&IV=\sum\limits_{l=1}^{3}\left[\onesca\Tr\Exilsca\matBlksca\wk\Tr\qk\right.\\
&+\wk\Tr\left(\matBlk\Rxilalpha\Tr\Pxilortho\Rxilbeta\right)\circ\matQsc{\qk}{\qtwolmone}\one\\
&\left.-\wk\Tr\left(\matBlk\Rxilbeta\Tr\Pxilortho\Rxilalpha\right)\circ\matQsc{\qk}{\qtwol}\one\right],
\end{split}
\end{equation}
where the fact that $\Exil$ is diagonal has been used throughout. Next, the term $V$ reduces to  
\begin{equation}\label{eq:termV}
\begin{split}
&V=\sum\limits_{l,m=1}^{3}\left[\onesca\Tr\Exilsca\matAlmksca\psixmk\right.\\
&+\left(\matAlmk\Rxilalpha\Tr\Pxilortho\Rxilbeta\right)\circ\matQsc{\qk}{\qtwolmone}\one\\
&\left.-\left(\matAlmk\Rxilbeta\Tr\Pxilortho\Rxilalpha\right)\circ\matFxmsc{\qk}{\qtwol}\one\right].
\end{split}
\end{equation}

Substituting~\eqref{eq:termIIII} and~\eqref{eq:termV} into~\eqref{eq:skewEulersemi3} and rearranging and summing over all elements gives
\begin{equation}\label{eq:skewEulersemi4}
\begin{split}
&\sum\limits_{\kappa=1}^{K}\onesca\Tr\Msca\frac{\mr{d}\matJksca\sk}{\mr{d}\tau}
=\sum\limits_{\kappa=1}^{K}\Bigg\{\\
&\sum\limits_{l=1}^{3}\left[\onesca\Tr\Exilsca\matBlksca\phik+\wk\Tr\left(\matBlk\Rxilalpha\Tr\Pxilortho\Rxilbeta\right)\circ\matQsc{\qk}{\qtwolmone}\one\right.\\
&\left.-\wk\Tr\left(\matBlk\Rxilbeta\Tr\Pxilortho\Rxilalpha\right)\circ\matQsc{\qk}{\qtwol}\one\right]\\
&+\sum\limits_{l,m=1}^{3}\left[\onesca\Tr\Exilsca\matAlmksca\psixmk\right.\\
&+\left(\matAlmk\Rxilalpha\Tr\Pxilortho\Rxilbeta\right)\circ\matQsc{\qk}{\qtwolmone}\one\\
&-\left.\left(\matAlmk\Rxilbeta\Tr\Pxilortho\Rxilalpha\right)\circ\matFxmsc{\qk}{\qtwol}\one\right]
\Bigg\},
\end{split}
\end{equation}
The coupling terms each have counter-parts from abutting elements, thus, for example
\begin{equation*}
\begin{split}
&\bm{CT} = \wk\Tr\left(\matBlk\Rxilalpha\Tr\Pxilortho\Rxilbeta\right)\circ\matQsc{\qk}{\qtwolmone}\one\\
&-\wtwolmone\Tr\left(\matBltwolmone\Rxilbeta\Tr\Pxilortho\Rxilalpha\right)\circ\matQsc{\qtwolmone}{\qk}\one\\
&=\phik\Tr\matBlksca\Rxilalphasca\Tr\Pxilorthosca\Rxilbetasca\onesca
-\onesca\Tr\matBlksca\Rxilalphasca\Tr\Pxilorthosca\Rxilbetasca\phitwolmone\\
&=\phik\Tr\matBlksca\Rxilalphasca\Tr\Pxilorthosca\Rxilbetasca\onesca
-\phitwolmone\Tr\matBltwolmonesca\Rxilbetasca\Tr\Pxilorthosca\Rxilalphasca\onesca,
\end{split}
\end{equation*}
where Theorem~\ref{thrm:telescope} has been used. Moreover,  advantage has been 
taken of 1) the fact that terms like $\Rxilalphasca\Tr\Pxilorthosca\Rxilbetasca$ are diagonal and therefore commute with 
the diagonal matrices containing the metric terms, 2) such terms only pick of metrics at the interface, and 3) the metric terms match at the interface.

The result of the above analysis is that the surface terms in~\eqref{eq:skewEulersemi4} cancel out at interior elements; thus, 
on a periodic domain 
\begin{equation}\label{eq:skewEulersemi5}
\begin{split}
&\sum\limits_{\kappa=1}^{K}\onesca\Tr\Msca\frac{\mr{d}\matJksca\sk}{\mr{d}\tau} = 0.
\end{split}
\end{equation}
For non-periodic domains, the remaining terms are surface terms at the boundary of the domain and appropriate entropy conservative/stable 
SATs need to be employed.

\section*{Acknowledgments}
N. Yamaleev and J. Lou gratefully acknowledge the support provided by Army Research Office through grant W911NF-17-1-0443.
We would also like to thank Old Dominion University for the use of the Turing High Performance Computing cluster.

\bigskip

\bibliographystyle{siamplain}

\bibliography{references}

\begin{thebibliography}{10}

\bibitem{Abe2014}
{\sc Y.~Abe, T.~Nonomura, N.~Iizuka, and K.~Fujii}, {\em Geometric
  interpretations and spatial symmetry property of metrics in the conservative
  form for high-order finite-difference schemes on moving and deforming grids},
  Journal of Computational Physics, 260 (2014), pp.~163--203.

\bibitem{Carpenter2014}
{\sc M.~H. Carpenter, T.~C. Fisher, E.~J. Nielsen, and S.~H. Frankel}, {\em
  Entropy stable spectral collocation schemes for the {N}avier-{S}tokes
  equations: discontinuous interfaces}, SIAM Journal on Scientific Computing,
  36 (2014), pp.~B835--B867.

\bibitem{CFNPSY2016}
{\sc M.~H. Carpenter, T.~C. Fisher, E.~J. Nielsen, M.~Parsani, M.~Sv\"ard, and
  N.~K. Yamaleev}, {\em Entropy stable summation-by-parts formulations for
  compressible computational fluid dynamics}, Handbook of Numerical Analysis,
  17 (2016), pp.~495--524.

\bibitem{Carpenter1999}
{\sc M.~H. Carpenter, J.~Nordstr\"om, and D.~Gottlieb}, {\em A stable and
  conservative interface treatment of arbitrary spatial accuracy}, Journal of
  Computational Physics, 148 (1999), pp.~341--365.

\bibitem{Carpenter2015}
{\sc M.~H. Carpenter, M.~Parsani, T.~C. Fisher, and E.~J. Nielsen}, {\em
  Entropy stable staggered grid spectral collocation for the {B}urgers' and
  compressible {N}avier-{S}tokes equations}, tech. report, NASA, 2015.

\bibitem{DCDRF2014}
{\sc D.~C. {Del Rey Fern\'andez}, P.~D. Boom, and D.~W. Zingg}, {\em A
  generalized framework for nodal first derivative summation-by-parts
  operators}, Journal of Computational Physics, 266 (2014), pp.~214--239.

\bibitem{Fernandez2014}
{\sc D.~C. {Del Rey Fern\'andez}, J.~E. Hicken, and D.~W. Zingg}, {\em Review
  of summation-by-parts operators with simultaneous approximation terms for the
  numerical solution of partial differential equations}, Computers \& Fluids,
  95 (2014), pp.~171--196.

\bibitem{Donea1982}
{\sc J.~Donea, S.~Giuliani, and J.~P. Halleux}, {\em An arbitrary
  {Lagrangian-Eulerian} finite element method for transient dynamic
  fluid-structure interactions}, Comput. Meth. Appl. Mech. Eng., 33 (1982),
  pp.~689--723.

\bibitem{Farhat2001}
{\sc C.~Farhat, P.~Geuzaine, and C.~Grandmont}, {\em The discrete geometric
  conservation law and the nonlinear stability of {ALE} schemes for the
  solution of flow problems on moving grids}, Journal of Computational Physics,
  174 (2001), pp.~669--694.

\bibitem{Fernandez2018prefine}
{\sc D.~C. D.~R. Fern\'andez, M.~H. Carpenter, L.~Fredrich, A.~R. Winters,
  G.~J. Gassner, M.~Parsani, and L.~Dalcin}, {\em Entropy stable non-conforming
  discretizations with the summation-by-parts property for curvilinear
  coordinates}, tech. report, NASA LaRC, 2018.

\bibitem{Fisher2012phd}
{\sc T.~C. Fisher}, {\em High-order $L^{2}$ stable multi-domain finite
  difference method for compressible flows}, PhD thesis, Purdue University,
  August 2012.

\bibitem{Fisher2013}
{\sc T.~C. Fisher, M.~H. Carpenter, J.~Nordstr\"om, N.~K. Yamaleev, and
  C.~Swanson}, {\em Discretely conservative finite-difference formulations for
  nonlinear conservation laws in split form: {Theory} and boundary conditions},
  Journal of Computational Physics, 234 (2013), pp.~353--375.

\bibitem{Friedrich2018}
{\sc L.~Friedrich, G.~Schn\"ucke, A.~R. Winters, D.~C. D.~R. Fern\'andez, G.~J.
  Gassner, and M.~H. Carpenter}, {\em Entropy stable space-time discontinuous
  {G}alerkin schemes with summation-by-parts property for hyperbolic
  conservation laws}, arXiv, 1808.08218 (2018).

\bibitem{Godunov1961}
{\sc S.~K. Godunov}, {\em An interesting class of quasilinear systems}, Dolk.
  Akad. Nauk SSSR, 139 (1961), pp.~521--523.

\bibitem{Horn2012}
{\sc R.~A. Horn and C.~R. Johnson}, {\em Matrix Analysis}, Cambridge University
  Press, 2nd~ed., 2012.

\bibitem{Hughes1981}
{\sc T.~Hughes, W.~K. Liu, and T.~K. Zimmerman}, {\em {Lagrangian-Eulerian}
  finite element formulation for incompressible viscous flows}, Comput. Meth.
  Appl. Mech. Eng., 29 (1981), pp.~329--349.

\bibitem{Ismail2009}
{\sc F.~Ismail and P.~Roe}, {\em Affordable, entropy-consistent {E}uler flux
  functions {II}: {E}ntropy production at shocks}, Journal of Computational
  Physics, 228 (2009), pp.~5410--5436.

\bibitem{KCL2000}
{\sc C.~A. Kennedy, M.~H. Carpenter, and R.~M. Lewis}, {\em Low-storage,
  explicit {Runge-Kutta} schemes for the compressible {Navier-Stokes}
  equations}, Appl. Numer. Math., 25 (2000), pp.~177--219.

\bibitem{Kopriva2016}
{\sc D.~A. Kopriva, A.~R. Winters, M.~Bohmb, and G.~J. Gassner}, {\em A
  provably stable discontinuous {G}alerkin spectral element approximation for
  moving hexahedral meshes}, Computers and Fluids, 139 (2016), pp.~148--160.

\bibitem{Farhat1996}
{\sc M.~Lesoinne and C.~Farhat}, {\em Geometric conservation laws for flow
  problems with moving boundaries and deformable meshes, and their impact on
  aeroelastic computations}, Comput. Meth. Appl. Mech. Eng., 34 (1996),
  pp.~71--90.

\bibitem{Merriam1989}
{\sc M.~L. Merriam}, {\em An entropy-based approach to nonlinear stability},
  tech. report, NASA, 1989.

\bibitem{Nordstrom2015}
{\sc S.~Nikkar and J.~Nordstr\"om}, {\em Fully discrete energy stable high
  order finite difference methods for hyperbolic problems in deforming
  domains}, Journal of Computational Physics, 291 (2015), pp.~82--98.

\bibitem{Parsani2016}
{\sc M.~Parsani, M.~H. Carpenter, T.~C. Fisher, and E.~J. Nielsen}, {\em
  Entropy stable staggered grid discontinuous spectral collocation methods of
  any order for the compressible {N}avier-{S}tokes equations}, SIAM Journal on
  Scientific Computing, 38 (2016), pp.~A3129--A3162.

\bibitem{Parsani2015b}
{\sc M.~Parsani, M.~H. Carpenter, and E.~J. Nielsen}, {\em Entropy stable
  discontinuous interfaces coupling for the three-dimensional compressible
  {N}avier-{S}tokes equations}, Journal of Computational Physics, 290 (2015),
  pp.~132--138.

\bibitem{Ranocha2018}
{\sc H.~Ranocha}, {\em Comparison of some entropy conservative numerical fluxes
  for the {E}uler equations}, Journal of Scientific Computing, 76 (2018),
  pp.~216--242.

\bibitem{Sjogreen2014}
{\sc B.~Sj\"orgreen, H.~Yee, and M.~Vinokur}, {\em On high order
  finite-difference metric discretizations satisfying {GCL} on moving and
  deforming grids}, Journal of Computational Physics, 265 (2014), pp.~211--220.

\bibitem{Svard2014}
{\sc M.~Sv\"ard and J.~Nordstr\"om}, {\em Review of summation-by-parts schemes
  for initial-boundary-value-problems}, Journal of Computational Physics, 268
  (2014), pp.~17--38.

\bibitem{Tadmor2003}
{\sc E.~Tadmor}, {\em Entropy stability theory for difference approximations of
  nonlinear conservation laws and related time-dependent problems}, Acta
  Numerica, 12 (2003), pp.~451--512.

\bibitem{Thomas1979}
{\sc P.~D. Thomas and C.~K. Lombard}, {\em Geometric conservation law and its
  application to flow computations on moving grids}, AIAA Journal, 17 (1979),
  pp.~1030--1034.

\bibitem{Visbal2002}
{\sc M.~R. Visbal and D.~V. Gaitonde}, {\em On the use of higher-order
  finite-difference schemes on curvilinear and deforming meshes}, Journal of
  Computational Physics, 181 (2002), pp.~155--185.

\bibitem{Yamaleev2017}
{\sc N.~Yamaleev and M.~H. Carpenter}, {\em A family of fourth-order entropy
  stable nonoscillatory spectral collocation schemes for the {1-D
  Navier-Stokes} equations}, Journal of Computational Physics, 331 (2017),
  pp.~90--107.

\end{thebibliography}

\end{document}